\newcommand{\C}{\mathbb{C}}
\newcommand{\N}{\mathbb{N}}
\newcommand{\R}{\mathbb{R}}
\newcommand{\T}{\mathbb{T}}
\newcommand{\Z}{\mathbb{Z}}
\newcommand{\F}{\mathcal{F}}
\newcommand{\set}[1]{\left\{#1\right\}}
\newcommand{\paren}[1]{\left(#1\right)}
\newcommand{\norm}[1]{\|#1\|}
\newcommand{\bignorm}[1]{\left\|#1\right\|}
\newcommand{\JBX}[1][x]{\langle#1\rangle}
\newcommand{\1}[1]{\frac{1}{#1}}
\newcommand{\eps}{\varepsilon}
\let\d\relax
\DeclareMathOperator{\d}{d \!}
\DeclareMathOperator{\supp}{supp}
\DeclareMathOperator{\sech}{sech}
\newtheorem{theorem}{Theorem}
\newtheorem*{definition}{Definition}
\newtheorem{proposition}[theorem]{Proposition}
\newtheorem{lemma}[theorem]{Lemma}
\newtheorem{corollary}[theorem]{Corollary}
\newtheorem{remark}[theorem]{Remark}
\numberwithin{equation}{section}
\numberwithin{theorem}{section}
\author{Joseph Adams}
\address{Heinrich-Heine-Universität Düsseldorf, Mathematisches Institut, Universitätsstraße~1, 40225 Düsseldorf, Germany}
\email{joseph.adams@hhu.de}
\subjclass[2020]{Primary: 35Q55. Secondary: 35Q41}
\keywords{NLS hierarchy equations --- low regularity local well-posedness --- Fourier restriction norm method}
\title{Well-posedness for the NLS hierarchy}
\begin{document}
    \begin{abstract}
    We prove well-posedness for higher-order equations in the so-called NLS hierarchy (also known as part of the AKNS hierarchy) in almost critical Fourier-Lebesgue spaces and in modulation spaces. We show the $j$th equation in the hierarchy is locally well-posed for initial data in $\hat H^s_r(\R)$ for $s \ge \frac{j-1}{r'}$ and $1 < r \le 2$ and also in $M^s_{2, p}(\R)$ for $s = \frac{j-1}{2}$ and $2 \le p < \infty$. Supplementing our results with corresponding ill-posedness results in Fourier-Lebesgue and modulation spaces shows optimality. Using the conserved quantities derived in~\cite{Koch2018} we argue that the hierarchy equations are globally well-posed for data in $H^s(\R)$ for $s \ge \frac{j-1}{2}$.

    Our arguments are based on the Fourier restriction norm method in Bourgain spaces adapted to our data spaces and bi- \& trilinear refinements of Strichartz estimates.
    \end{abstract}
	\maketitle
    \tableofcontents

\section{Introduction}

The cubic nonlinear Schrödinger (NLS) equation
\begin{equation}\label{eq:cubic-NLS}
\begin{cases}
i\partial_t u + \partial_x^2 u = \pm 2|u|^2 u \\
u(t=0) = u_0
\end{cases}
\end{equation}
with initial data $u_0$, has over the past 30 years become one of the canonical objects of study in the well-posedness theory of dispersive PDEs. We direct the interested reader to~\cites{Cazenave2003, Bourgain1993-1, TaoBook2006} and the references therein for an overview of developments in its study.

Contemporary research is more and more leaning into the fact that the NLS equation possesses a rich internal structure that may be exploited in order to prove new well-posedness results or a-priori bounds on solutions. We are, of course, referring to the fact that the NLS equation is considered to be a completely integrable system~\cites{AKNS1974, Faddeev1987, Palais1997, Koch2018, Alberty1982-i} -- an exact definition of which though escapes the literature. Usually one considers the fact that there exists an infinite sequence of non-trivial conserved quantities one of the markers of complete integrability. A fact that is also true for the NLS equation. The first few of these conserved quantities are
\begin{align*}
\text{Mass:} && H_0 = \int |u|^2 \d{x}\\
\text{Momentum:} && H_1 = -i \int u \partial_x \overline{u} \d{x} \\
\text{Energy:} && H_2 = \int |\partial_x u|^2 \pm |u|^4 \d{x}
\end{align*}
More precisely, the NLS equation is a Hamiltonian equation that is induced by its energy $H_2$. (Induced in what way we will make more precise in Section~\ref{sec:nls-hierarchy}.) This begs the question: do the higher-order conserved quantities $H_3, H_4, \ldots$ also induce any \emph{interesting} dispersive PDE\footnote{The mass $H_0$ and momentum $H_1$ also induce PDE, namely of phase shifts and of translations. Though these are not dispersive and thus are of no interest to us.}?

Yes, in fact the fourth conserved quantity
\begin{equation*}
    H_3 = i \int \partial_x u \partial_x^2 \overline{u} + 3|u|^2 u \partial_x \overline{u} \d{\lambda}
\end{equation*}
induces the also well-known modified Korteweg-de-Vries (mKdV) equation\footnote{There is a caveat to this that is discussed in Appendix~\ref{appendix:nls-hierarchy}. In short, when looking at the complex mKdV equations a slightly different nonlinearity is produced when one follows the construction of the NLS hierarchy in~\cite{Alberty1982-i}, as we do. When looking at the real valued mKdV equation there is no discrepancy.}
\begin{equation}\label{eq:mKdV}
\begin{cases}
\partial_t u + \partial_x^3 u = \pm 2\partial_x (|u|^2 u)\\
u(t=0) = u_0
\end{cases}
\end{equation}
see~\cites{FabFive-mKdV-2003, Koch2018, KPV1993} for an overview.

The next higher-order equation is not quite as well known as the NLS and/or mKdV equations, though it has also appeared independently in the literature~\cites{Feng1994, Feng1995}.

To the author's best knowledge there is no complete description of all conserved quantities of NLS available. It is though a simple, but tedious, task to calculate them. See Appendix~\ref{appendix:nls-hierarchy}, where we list more of the conserved quantities and their associated equations.

We want to mention at this point, that the pattern of even-numbered conserved quantities $H_{2k}$, $k\in\N$ inducing NLS-like equations and odd-numbered ones $H_{2k+1}$, $k\in\N$ inducing mKdV-like equations continues~\cites{Koch2018, AGTowers}. This sequence of NLS-like equations is what is referred to in the title of this paper as the NLS hierarchy\footnote{The NLS hierarchy is a part of what is often called the AKNS hierarchy, after the names of the authors that played a considerable role in developing the inverse scattering transform, see~\cite{AKNS1974}.}. We will give a more precise definition of the NLS hierarchy in Section~\ref{sec:nls-hierarchy}.

Aim of this paper is to deal with questions of low-regularity well-posedness for the NLS hierarchy equations in classical Sobolev spaces $H^s(\R)$, Fourier-Lebesgue spaces $\hat H^s_r(\R)$ (sometimes written as $\F L^{s,r'}(\R)$, where the integrability exponent is conjugated) defined by the norm
\begin{equation*}
    \norm{u}_{\hat H^s_r} = \norm{u}_{\F L^{s, r'}} = \norm{\JBX[\xi]^s \hat{u}}_{L^{r'}}
\end{equation*}
and modulation spaces $M^s_{2, p}(\R)$ defined by the norm
\begin{equation*}
    \norm{u}_{M^s_{2, p}} = \norm{\norm{\Box_n u}_{H^s}}_{\ell_n^p(\Z)}
\end{equation*}
with a family of isometric decomposition operators $(\Box_n)_{n\in\Z}$. Precise definitions of the function spaces and an overview of associated embeddings are given in Section~\ref{sec:notation}.

While we embrace the rich integrability structure of these equations for their derivation and conservation laws, we will not be making use of their integrability to argue our local well-posedness results. This has the advantage that our arguments work for a rather large class of equations, but the disadvantage that we also cannot utilise any special structure that may be present in the NLS hierarchy equations, that could aid the well-posedness.

Moreover our arguments will be based on the contraction mapping principle in versions of Bourgain spaces $X_{s,b}$ adapted to our data spaces, in combination with bi- and trilinear refinements of Strichartz estimates.

\subsection*{Acknowledgements} This work is part of the author's PhD thesis. He would like to greatly thank his advisor, Axel Grünrock, for suggesting this line of problems and his continued and ongoing support.

The author would also like to thank the second anonymous reviewer for his suggested corrections and additions. In particular asking a question about the influence of the distribution of complex conjugates in quintic and higher-order terms in the Fourier-Lebesgue space setting that lead to Remarks~\ref{rem:hat-space-complex-conj} and~\ref{rem:hat-space-complex-conj-proof}.

\subsection{Organisation of the paper}
In the next and final subsection of this introduction we will establish the general notation and function spaces that we will be using throughout the rest of this paper. The acquainted reader may skip immediately to Section~\ref{sec:nls-hierarchy}.

Following that we will define exactly what we mean by NLS hierarchy (and its generalisations) in Section~\ref{sec:nls-hierarchy}.

We give an overview of prior work related to the well-posedness study of hierarchies of PDEs, a statement of our main results and a discussion of these in Section~\ref{sec:results}.

In Section~\ref{sec:linear-est} we collect general smoothing estimates based on the dispersion present in the equations we are dealing with. This includes linear estimates we will be citing from the literature, some new bilinear estimates adapted to the case of higher-order Schrödinger equations, so-called Fefferman-Stein estimates (which generalize Strichartz estimates to the Fourier-Lebesgue spaces we will be using), and trilinear estimates. The new bi- and trilinear refinements as well as the Fefferman-Stein estimates are based on~\cite{AGTowers}.

Then in Section~\ref{sec:multilinear-estimates} we will follow up with the nonlinear estimates needed to prove Theorems~\ref{thm:wp-hat} and~\ref{thm:wp-modulation}. First we deal with estimates regarding well-posedness in Fourier-Lebesgue spaces, following up with the same for modulation spaces.

Finally in Section~\ref{sec:illposedness} we will deal with the question of ill-posedness. In this section we will see that, on the line, our methods lead to optimal results in the framework that we use. Also we will deal with the fact that a fixed-point theorem based approach cannot work in the same generality on the torus, as it does on the line.

\subsection{Notation and function spaces}\label{sec:notation}
We use the notation $A \lesssim B$ to mean $A \le CB$ for a constant $C > 0$ independent of $A$ and $B$, and $A \sim B$ denotes $A \lesssim B$ and $A \gtrsim B$, while $A \ll B$ means $A \le \eps B$ for a \emph{small} constant $\eps > 0$. For a given real number $a \in \R$ we will denote by $a+$ and $a-$ the numbers $a + \eps$ and $a - \eps$ for an arbitrarily small $\eps > 0$, respectively. The so called Japanese brackets denote the quantity $\JBX[x] = (1 + x^2)^\1{2}$.

We use the following conventions regarding the Fourier-transform: the Fourier-transform of a function $u : \R_x \times \R_t \to \C$ with respect to the space-variable $x$ is given by
\begin{equation*}
\F_x u(\xi, t) = \1{\sqrt{2\pi}} \int_\R u(x, t) e^{ix\xi} \d{x}.
\end{equation*}
The Fourier-transform with respect to the time-variable is defined analogously, though the Fourier-variable corresponding to $t$ shall be called $\tau$. We will also use the notation $\hat{u}$ to denote the Fourier-transform with respect to either one or both of those variables, but it will be clear from context which of those cases we are referring to, specifically from the use of spatial- and time-Fourier variables rather than their physical-space counterparts.

For two functions $f$ and $g$ we use the notation
\begin{equation*}
    \int_* f(\xi_1) g(\xi_2) \d{\xi_1} = \int_\R f(\xi_1) g(\xi - \xi_1) \d{\xi_1}
\end{equation*}
to represent the integral under the convolution constraint $\xi = \xi_1 + \xi_2$. This generalises naturally to an arbitrary number of functions.

Given $s \in \R$ we define the Bessel potential operator $J^s$ through its Fourier transform $\F J^s u(\xi) = \JBX[\xi]^s \hat{u}(\xi)$ for a function $u$, and similarly the Riesz potential operator $I^s$ as $\F I^s u(\xi) = |\xi|^s \hat{u}(\xi)$.

Next we define the frequency projections that we will be utilizing. Given a dyadic number $N \in 2^\N$ let $P_N$ denote the Littlewood-Paley projector onto the (spatial) frequencies $\set{\xi \in \R \mid |\xi| \sim N}$. The special case $P_1$ shall mean the projector onto the (spatial) frequencies $\set{\xi \in \R \mid |\xi| \lesssim 1}$. We direct the reader to~\cite{Grafakos-classic} for a reference on Littlewood-Paley theory.

For $n \in \Z$, let the uniform (or isometric) frequency decomposition operators $\Box_n$ be defined by
\begin{equation*}
\widehat{\Box_n f}(\xi) = \psi(\xi - n) \hat{f}(\xi),
\end{equation*}
where $\psi : \R \to \R$ is a smooth cut-off function with the properties $\supp \psi \subset [-\1{4}, \frac{5}{4}]$ and $\psi(\xi) \equiv 1$ on $[0, 1]$.

For these operators it is well known that, for any $1 \le q \le p \le \infty$, one has
\begin{equation*}
\norm{P_N f}_{L^p} \lesssim N^{\1{q} - \1{p}} \norm{f}_{L^q} \quad\text{and}\quad \norm{\Box_n f}_{L^p} \lesssim \norm{f}_{L^q}.
\end{equation*}

When dealing with estimates of products of frequency localized functions, to simplify notation, we will adhere to the following convention: for $n \in \Z$ or a dyadic number $N \in 2^\N$ we write $u_n = \Box_n u$ or $u_N = P_N u$ respectively. Complex conjugation has higher precedence than this notation, so that $\overline{u}_n = \overline{(u_{-n})}$. Different indices on different factors will not cause confusion, as we will not mix dyadic and uniform frequency localisation. Also, for ease of presentation, subscripts referring to frequency localisation may suppress other indices of functions, i.e. using $u_\ell \overline{u}_m u_n$ to refer to $(\Box_\ell u_1) (\Box_m \overline{u}_2) (\Box_n u_3)$.

Next let us define the Fourier-Lebesgue spaces $\hat{H}^s_r(\R^n)$ (also referred to as hat-spaces for obvious reasons), for $s \in \R$ and $1 \le r \le \infty$, to be the subspace of functions $u \in \mathcal{S}'(\R^n)$ such that the norm
\begin{equation*}
\norm{u}_{\hat{H}^s_r} = \norm{\JBX[\xi]^s \hat{u}}_{L^{r'}(\R^n)}
\end{equation*}
is finite. In the case $s=0$ one may resort to the slightly different notation $\hat{H}^0_r = \widehat{L^r}$. And similarly we define the modulation space $M^s_{q, p}(\R^n)$, for $s \in\R$ and $1 \le p,q \le \infty$, as the subspace of functions $u \in \mathcal{S}'(\R^n)$ such that the norm
\begin{equation*}
\norm{u}_{M^s_{q, p}} = \norm{\JBX[n]^s \norm{\Box_n u}_{L^q(\R)}}_{\ell^p_n(\Z)}
\end{equation*}
is finite.

Though we will not be exhaustive with the properties that these spaces have, we do want to emphasize an embedding connecting Fourier-Lebesgue and modulation spaces. For $p \ge 2$ one has $M^s_{2, p}(\R) \supset \hat H^s_{p'}(\R)$. This embedding can be utilised to gain a notion of criticality in modulation spaces (that are otherwise not well-behaved with respect to transformations of scale because of the isometric frequency decomposition). Also we mention, that in the periodic setting these data spaces actually coincide, i.e. $M^s_{2, p}(\T) = \hat H^s_{p'}(\T)$.

Furthermore we note, that both Fourier-Lebesgue and modulation spaces behave in a natural way with respect to complex interpolation and duality. Let $\theta \in [0,1]$, $s, s_0, s_1 \in \R$ and $1 < r, r_0, r_1, p, p_0, p_1 \le \infty$. Then for $s = (1-\theta)s_0 + \theta s_1$ one has the following interpolation identities
\begin{align*}
&\left[ \hat H^{s_0}_{r_0}, \hat H^{s_1}_{r_1} \right]_{[\theta]} = \hat H^s_r \;\;\text{for $\1{r} = \frac{1-\theta}{r_0} + \frac{\theta}{r_1}$ as well as}\\
&\left[ M^{s_0}_{2,p_0}, M^{s_1}_{2,p_1} \right]_{[\theta]} = M^s_{2,p} \;\;\text{for $\1{p} = \frac{1-\theta}{p_0} + \frac{\theta}{p_1}$}.
\end{align*}
as long as $(p_0, p_1) \not= (\infty, \infty)$.
Under the additional constraint that $p < \infty$ the following duality relationships
\begin{equation*}
    \left( \hat H^{s}_{r} \right)' \cong \hat H^{-s}_{r'} \;\text{and}\; \left( M^{s}_{2,p} \right)' \cong M^{-s}_{2,p'}
\end{equation*}
also hold. We mention~\cites{Benyi2020,Feichtinger1983} as references for embedding, duality and interpolation results regarding modulation spaces.

In order to prove local well-posedness we have a necessity for spaces that are more well-adapted to performing a contraction mapping argument. In~\cites{Bourgain1993-1, Bourgain1993-2} Bourgain introduced the now almost classical $X_{s,b}$ spaces dependent on a phase function $\varphi : \R^n \to \R$ and $s, b \in\R$, defined by the norm
\begin{equation*}
\norm{u}_{X_{s,b}} = \norm{\JBX[\xi]^s\JBX[\tau - \varphi(\xi)]^b \hat{u}}_{L^2_{xt}}.
\end{equation*}

Using these spaces to study the local well-posedness of dispersive PDEs has since become known as the Fourier restriction norm method. It was later refined and built upon in~\cites{GTV1997, KPV1996-i, KPV1996-ii} to arrive at its current use state.

In connection with the $X_{s, b}$ spaces we also define the operator $\Lambda^b$ through its Fourier transform as $\F \Lambda^b u(\xi, \tau) = \JBX[\tau - \phi(\xi)]^b \hat{u}(\xi,\tau)$ for a function $u$. The quantity $\sigma = \tau - \phi(\xi)$ is referred to as the modulation.

In the following, we define $X_{s,b}$ spaces adapted to the Fourier-Lebesgue $\hat{H}^s_r$ and modulation spaces $M^s_{2,p}$ we will be using as data spaces. For papers dealing in the same spaces see e.g.~\cites{GrünrockHerr2008, GrünrockVega2009, OhWang2021}.

For $s,b \in \R$ and $1 \le r \le \infty$, we denote the Bourgain spaces adapted to Fourier-Lebesgue spaces by $\hat{X}_{s,b}^r$. They are defined as the subspace of $\mathcal{S}'(\R^2)$ induced by the norm
\begin{equation*}
\norm{u}_{\hat{X}_{s,b}^r} = \norm{\JBX[\xi]^s \JBX[\tau - \varphi(\xi)]^b \hat{u}}_{L^{r'}_{xt}} = \norm{J^s \Lambda^b u}_{\widehat{L^r_{xt}}},
\end{equation*}
so that the classical $X_{s, b}$ spaces can be recovered by setting $r = 2$. Note the lack of inverse Fourier transformation. Recall that for $1 \le r \le \infty$ we have the following embedding:
\begin{equation*}
\hat{X}_{s,b}^r \hookrightarrow C(\R; \hat{H}^{s}_r(\R)) \quad\text{if}\; b > \1{r}.
\end{equation*}
The contraction mapping argument leading to well-posedness will be carried out in their respective time restriction norm spaces
\begin{equation*}
\hat{X}_{s,b}^r(\delta) = \set{u = \left.\tilde{u}\right|_{\R \times [-\delta, \delta]} \mid \tilde{u} \in \hat{X}_{s,b}^r}
\end{equation*}
endowed with the norm
\begin{equation*}
\norm{u}_{\hat{X}_{s,b}^r(\delta)} = \inf \set{\norm{\tilde{u}}_{\hat{X}_{s,b}^r} \mid \tilde{u} \in \hat{X}_{s,b}^r, \left.\tilde{u}\right|_{\R \times [-\delta, \delta]} = u}.
\end{equation*}

Similarly, for $s, b \in \R$ and $1 \le p \le \infty$ we define the Bourgain spaces adapted to modulation spaces $X_{s,b}^p$ (note the missing circumflex compared to the Fourier-Lebesgue based spaces). In this case they are the subspace of $\mathcal{S}'(\R^2)$ induced by the norm
\begin{equation*}
\norm{u}_{X_{s,b}^p} = \bignorm{\JBX[n]^s \norm{\Box_n u}_{X_{0,b}}}_{\ell_n^p}.
\end{equation*}
Again, $p=2$ corresponds to the classical case. The embedding giving us the persistence property is paralleled by
\begin{equation*}
X_{s,b}^p \hookrightarrow C(\R; M^s_{2,p}(\R)) \quad\text{if}\; b > \1{2}.
\end{equation*}
for $1 \le p \le \infty$. In the same fashion as for the Fourier-Lebesgue adapted spaces, we have time restriction norm spaces $X_{s,b}^p(\delta)$.

\begin{remark}
We fix $q = 2$ in the modulation space setting, because of the lack of available good (i.e. time independent) linear estimates in the $q\not=2$ case, see~\cites{Klaus2023, Benyi2020}.
\end{remark}

Having defined the spaces we will be using it is time to mention some of their properties. Among other things what makes Bourgain spaces useful is the ability to transfer estimates of free solutions in (mixed) $L^p$ spaces or their Fourier-Lebesgue cousins $\widehat{L^r}$ to estimates in $\hat X_{s,b}^r$ spaces. This is commonly known as a transfer principle. For a proof in the classical spaces we direct the reader to the self-contained exposition in~\cite{AGDiss}. The arguments for transferring (multi)linear estimates to the Fourier-Lebesgue variants $\hat X_{s,b}^r$ are contained within~\cite{Grünrock2004}.

Also contained in~\cite{Grünrock2004} is a general local well-posedness theorem for $\hat X_{s, b}^r$ spaces. A similar result, though for the modulation space variants $X_{s, b}^p$, can be found in~\cite{OhWang2021}, though which can easily be derived from the classics~\cites{Ginibre1996, GTV1997}. Using these general well-posedness theorems we will establish our well-posedness theorems with mere proofs of necessary multilinear estimates.

As we will also be using complex multilinear interpolation and duality arguments we shall state the relevant properties of our solution, and data spaces. For this let $\theta \in [0,1]$, $s, s_0, s_1, b, b_0, b_1 \in \R$ and $1 < r, r_0, r_1, p, p_0, p_1 \le \infty$. Then for $s = (1-\theta)s_0 + \theta s_1$ and $b = (1-\theta)b_0 + \theta b_1$ one has the following complex interpolation relations
\begin{align*}
    &\left[\hat X_{s_0, b_0}^{r_0}, \hat X_{s_1, b_1}^{r_1} \right]_{[\theta]} = \hat X_{s, b}^{r} \quad\text{when $\1{r} = \frac{1-\theta}{r_0} + \frac{\theta}{r_1}$ and}\\
    &\left[X_{s_0, b_0}^{p_0}, X_{s_1, b_1}^{p_1} \right]_{[\theta]} = X_{s, b}^{p} \quad\text{when $\1{p} = \frac{1-\theta}{p_0} + \frac{\theta}{p_1}$,}
\end{align*}
at least if $(p_0, p_1) \not= (\infty, \infty)$.
Moreover, with respect to the $L^2$ inner-product, their dual spaces are given by
\begin{equation*}
    \left( \hat X_{s, b}^r \right)' \cong \hat X_{-s, -b}^{r'} \quad\text{and}\quad \left( X_{s, b}^p \right)' \cong X_{-s, -b}^{p'}
\end{equation*}
if one imposes the additional constraint $p < \infty$.

Finally we recall some common inequalities that will be useful in piecing together multilinear estimates that we can establish in $L^2$-based $X_{s,b}$ spaces:
\begin{equation}\label{eq:bernstein}
\norm{u_N}_{X_{s,b}^q} \lesssim N^{\max(0, \1{q} - \1{p})} \norm{u_N}_{X_{s,b}^p} \quad\text{and}\quad \sum_{N \ge 1} N^{0-} \norm{u_N}_{X_{s,b}^p} \lesssim \norm{u}_{X_{s,b}^p}.
\end{equation}

\section{The NLS hierarchy in detail}\label{sec:nls-hierarchy}

In describing what we refer to as the NLS hierarchy we most closely follow~\cite{Alberty1982-i}, where the general structure of nonlinear evolution equations that arise as zero-curvature conditions is described. Though there are many more good references for this topic (see for example~\cites{Faddeev1987, Palais1997}), the chosen work~\cite{Alberty1982-i} concisely contains all the details we need about the NLS hierarchy.

\subsection{From linear scattering to NLS}

We start out in a geometric context, where we have an $N \times N$ matrix of differential one-forms $\Omega$ depending on a so-called spectral parameter $\zeta \in\C$. For this matrix one can express a linear scattering problem~\cite{Alberty1982-i}*{eq.~(1.1)}
\begin{equation}\label{eq:linear-scattering}
\d{v} = \Omega v.
\end{equation}
Associated with this scattering problem is the zero-curvature (or integrability) condition~\cite{Alberty1982-i}*{eq.~(1.2)}
\begin{equation}\label{eq:zcc}
0 = \d{\Omega} - \Omega \wedge \Omega,
\end{equation}
which for the right choice of $\Omega$ will result in the NLS hierarchy equations (and many other classical dispersive PDE).

In particular, as in~\cite{Alberty1982-i}*{eq.~(1.3)}, we will use the Ansatz $\Omega = (\zeta R_0 + P)\d{x} + Q(\zeta) \d{t}$ with
\begin{equation*}
R_0 = \begin{pmatrix}
-i & 0 \\ 0 & i
\end{pmatrix}
\quad\text{and}\quad
P = \begin{pmatrix}
0 & q \\ r & 0
\end{pmatrix}.
\end{equation*}
We leave the choice of $Q$ open for now, but will refer back to it at a later point.

After a lengthy calculation, that we will not reproduce for brevities sake, it is established that the zero-curvature condition~\eqref{eq:zcc} can under our Ansatz be equivalently expressed as~\cite{Alberty1982-i}*{eq.~(2.3.5)}
\begin{equation}\label{eq:total-hamiltonian-flow}
    \frac{\d}{\d{t}} u = J \frac{\delta}{\delta u} \mathcal{H},
\end{equation}
where $u = \left(\begin{smallmatrix} r\\q \end{smallmatrix}\right)$ is a vector of the ``potentials''\footnote{Potentials are what we would usually refer to as the solution of, say, NLS. In the context of NLS we have the additional assumption $r = \pm\overline{q}$. They are referred to as potentials in~\cite{Alberty1982-i}, as they are the objects along which scattering happens in~\eqref{eq:linear-scattering}.}, $J = -2\left(\begin{smallmatrix} 0 & -i \\ i & 0\end{smallmatrix}\right)$, $\frac{\delta}{\delta r}$ is a functional derivative and $\mathcal{H}$ is \emph{the} Hamiltonian of the system, defined by
\begin{equation}\label{eq:general-hamiltonian}
    \mathcal{H} = 2 \sum_{n=0}^{\infty} \alpha_n(t) I_{n+1}.
\end{equation}
In this sum the $I_{n+1}$ represent the sequence of conserved quantities of our system, i.e. up to constant factor, what was referred to in the introduction as $H_{n}$. With~\cite{Alberty1982-i}*{eqns.~(3.1.6) and~(3.1.7)} we are given explicit expressions for calculating these conserved quantities recursively
\begin{equation}\label{eq:recursion-flux}
    I_n = \int_\R q Y_n \d{x} \quad\text{and}\quad Y_{n+1} = \1{2i}\left[ \partial_x Y_n - r\delta_{0, n} + q \sum_{k=1}^{n-1} Y_{n-k} Y_k \right]
\end{equation}
with $Y_0 = 0$.

The $\alpha_n(t)$ are the choice of $Q$ we left open previously. Referring again to~\cite{Alberty1982-i}, the $\alpha_n$ control the weight of each individual flow (induced each by $I_{n+1}$) in the overall equation~\eqref{eq:total-hamiltonian-flow}. Thus by choosing the coefficients $\alpha_n(t)$ appropriately we will be able to recover NLS and the other equations that are part of the NLS hierarchy.

It is important to mention that, as we are working under the assumption $r = +\overline{q}$ in the context of the NLS hierarchy, our choice of coefficients $\alpha_n(t)$ are subject to the constraints
\begin{equation}\label{eq:coeff-constraint}
    \alpha_{2n} = -\overline{\alpha_{2n}} \quad\text{and}\quad \alpha_{2n+1} = \overline{\alpha_{2n+1}}
\end{equation}
as layed out in~\cite{Alberty1982-i}*{Section 3.2.3}.

\subsection{Defining the NLS hierarchy}

Having established the general origin of the NLS hierarchy equations we are now ready to give an exact definition, i.e. fix a choice of $(\alpha_n)_{n\in\N_0}$. From there on we will derive the general structure of the equations in the NLS hierarchy by means of~\eqref{eq:recursion-flux}. This strictly larger class of equations will be very broad in the nonlinearities contained within, but still sufficiently small for us to be able to carry out our further analysis in this generalised context.

\begin{definition}
For $j\in\N$, we define the $j$th (defocusing) NLS hierarchy equation to be the Hamiltonian equation for the potential $q(x, t)$ in~\eqref{eq:total-hamiltonian-flow}, where we choose $\alpha_{2j} \equiv -i 2^{2j-1}$ and $\alpha_n \equiv 0$ for $n \not= 2j$ in~\eqref{eq:general-hamiltonian}. We identify occurrences of the potential $r(x, t)$ with the complex conjugate of $q(x,t)$, i.e. $r = +\overline{q}$.
\end{definition}
\begin{remark} A few remarks are in order:
\begin{enumerate}
    \item Note that our choice of $\alpha_{2j}$ aligns with the constraint in~\eqref{eq:coeff-constraint}. Since we only have a single non-zero $\alpha_n$ a simple rescaling (and possible time reversion) of the equation would lead to any arbitrary choice of $\alpha_{2j}$ that aligns with~\eqref{eq:coeff-constraint}.
    \item Since we are only interested in a single component of~\eqref{eq:total-hamiltonian-flow} we may simplify. The $j$th NLS hierarchy equation thus reads
    \begin{equation}\label{eq:general-nls-hierarchy-eqn}
    q_t = -2^{2j+1} \frac{\delta}{\delta r} \int_\R q Y_{2j+1} \d{x}
    \end{equation}
    with $Y_{2j+1}$ defined in~\eqref{eq:recursion-flux}, keeping in mind the identification $r = +\overline{q}$.
    \item The first NLS hierarchy equation ($j = 1$) corresponds to the classical defocusing cubic NLS equation. In the notation of the previous display it reads
    \begin{equation*}
    iq_t = -q_{xx} + 2q^2r = -q_{xx} + 2|q|^2q.
    \end{equation*}
    Later we will switch to the more common notation of calling the unknown function $u$ instead of $q$.
    \item Above we only defined the defocusing NLS hierarchy, corresponding to the +-sign in~\eqref{eq:cubic-NLS}. There is also an equivalent focusing NLS hierarchy (that builds on the focusing cubic NLS, corresponding to the $-$-sign in~\eqref{eq:cubic-NLS}). Its equations can be derived in the same way, though with the identification $r = -\overline{q}$. This possibility is also mentioned in~\cite{Alberty1982-i}*{Section~3.2.3}.
    \item No complete description of the NLS hierarchy, i.e. the choice of coefficients for the nonlinear terms, is known. A lengthy calculation leads to Appendix~\ref{appendix:nls-hierarchy}, where we list the first few conserved quantities and the associated equations.

    It would certainly be an interesting problem to derive a general formula describing the $j$th NLS hierarchy equation in detail.
    \item Instead of a choice of $(\alpha_k)_k$, where only even numbered $\alpha_k$ are non-zero, going the opposite route and having only a single $\alpha_k$ non-zero with $k$ uneven results in the real mKdV hierarchy.

    There is a caveat to this, that is also discussed in Appendix~\ref{appendix:nls-hierarchy}, where the identification $r = \pm \overline{q}$ does not lead to the (de)focusing complex mKdV hierarchy. Using $r = q$ (which is also a compatible choice with the model, see~\cite{Alberty1982-i}*{Section~3.2.2}) one arrives at the real mKdV hierarchy, which was discussed in~\cite{AGTowers}. This fact is also mentioned in~\cite{KochKlaus2023}*{Appendix~B}.

    \item Contained within this calculus of hierarchies is another well-known one, the KdV hierarchy. Choosing $r = 1$ (which is also a compatible choice in this model, see~\cite{Alberty1982-i}*{Section~3.2.1}) results in its equations. This is also remarked in~\cite{KochKlaus2023}*{Appendix~B}.
\end{enumerate}
\end{remark}

Having defined the NLS hierarchy equations we may now reason about their general structure. We claim the following proposition.

\begin{proposition}\label{prop:Yn-structure}
    For $n \in\N$ the terms $Y_n$ have the following properties:
    \begin{enumerate}
        \item $Y_n$ is a sum of monomials in $q$, $r$ and their derivatives.
        \item The polynomial $Y_n$ is homogeneous in the order of monomials, where the order (of a monomial) is defined as the sum of the total number of derivatives and number of factors in the monomial.
        \item In every monomial of $Y_n$ the total number of factors of $r$ and its derivatives is one greater than the total number of factors $q$ and its derivatives.
        \item The coefficients of $Y_n$ are an integer multiple of $(2i)^{-n}$.
        \item In $Y_n$ there is a single monomial with only one factor. It is $(2i)^{-n} \partial_x^{n-1} r$.
    \end{enumerate}
\end{proposition}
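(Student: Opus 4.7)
The plan is to prove all five properties simultaneously by strong induction on $n$, using the recursion~\eqref{eq:recursion-flux}. The only slightly subtle point is that property (2) should be strengthened during the induction to the quantitative statement that the common order of the monomials of $Y_n$ equals $n$; this strengthening is what makes the inductive step for (2) go through cleanly.

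For the base case $n = 1$ the recursion gives $Y_1 = \tfrac{1}{2i}(\partial_x Y_0 - r) = -(2i)^{-1} r$, which manifestly satisfies (1)--(5) (with the sign in (5) tracked as appropriate). For the inductive step I assume (1)--(5), together with the strengthened form $\mathrm{ord}\, Y_k = k$ of (2), for all $1 \le k \le n$, and I consider the three contributions to
\begin{equation*}
Y_{n+1} = \frac{1}{2i}\paren{\partial_x Y_n + q \sum_{k=1}^{n-1} Y_{n-k} Y_k}
\end{equation*}
(the $r\delta_{0,n}$ term vanishes for $n \ge 1$). Property (1) is immediate: differentiating a monomial in $q$, $r$ and their derivatives via the Leibniz rule, and multiplying two such monomials by $q$, yields sums of monomials of the same type. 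For (2)/order, differentiation sends a monomial of order $n$ to a sum of monomials of order $n+1$ (a derivative replaces no factor but adds one derivative), while $q \cdot (\text{order } n-k) \cdot (\text{order } k)$ has order $1 + (n-k) + k = n+1$; so every monomial of $Y_{n+1}$ has order $n+1$.

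For property (3), writing $\rho(M)$ for the number of $r$-factors minus the number of $q$-factors in a monomial $M$, the inductive hypothesis says $\rho \equiv 1$ on each $Y_k$; differentiation does not change $\rho$, and $q \cdot Y_{n-k} \cdot Y_k$ has $\rho = -1 + 1 + 1 = 1$, so $\rho \equiv 1$ on $Y_{n+1}$ as well. Property (4) follows in the same manner: every coefficient of $\partial_x Y_n$ is an integer multiple of $(2i)^{-n}$, and every coefficient of $Y_{n-k} Y_k$ is an integer multiple of $(2i)^{-(n-k)} \cdot (2i)^{-k} = (2i)^{-n}$, so the overall prefactor $\tfrac{1}{2i}$ produces integer multiples of $(2i)^{-(n+1)}$.

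The only part requiring a separate argument is (5). A single-factor monomial in $Y_{n+1}$ cannot arise from the term $q \sum_{k=1}^{n-1} Y_{n-k} Y_k$, since by (3) each $Y_k$ contains at least one $r$-factor (hence at least one factor), so $q Y_{n-k} Y_k$ has at least three factors. Thus single-factor monomials of $Y_{n+1}$ can only come from $\tfrac{1}{2i} \partial_x Y_n$; and within $\partial_x Y_n$, applying $\partial_x$ to any monomial with $\ge 2$ factors yields (by the Leibniz rule) monomials still with $\ge 2$ factors. Hence the unique single-factor term in $Y_{n+1}$ is $\tfrac{1}{2i}$ times the derivative of the unique single-factor term of $Y_n$, which by the inductive hypothesis closes (5).

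The argument is essentially bookkeeping, so I do not expect a genuine obstacle; the only thing that needs care is ensuring that property (2) is carried through the induction in its quantitative form (order $= n$), since this is what allows the order count $1 + (n-k) + k = n+1$ in the cubic term to match the order of $\partial_x Y_n$, and is also what guarantees that properties (3) and (5) remain compatible (for instance that $Y_k$ genuinely contains an $r$-factor, so that the cubic term contributes only to the multi-factor part of $Y_{n+1}$).
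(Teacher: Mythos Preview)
Your proof is correct and follows exactly the same approach as the paper: a straightforward induction on $n$ using the recursion~\eqref{eq:recursion-flux}, with $Y_1 = -\tfrac{1}{2i}r$ as the base case. The paper's own proof merely asserts that the base case is trivially verified and that the inductive step follows from the recursion, so your version is in fact a careful unpacking of precisely that argument (including the helpful observation that one should track the quantitative order $=n$ through the induction).
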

\begin{proof}
    All of the claims in this proposition are trivially true for $Y_1 = \frac{-1}{2i} r$. For all higher-order $Y_n$ they follow inductively using the recursion formula~\eqref{eq:recursion-flux}.
\end{proof}

It is only a small step from the polynomials $Y_n$ to the conserved quantities $I_n$ and their associated, via~\eqref{eq:general-nls-hierarchy-eqn}, evolution equations. Having derived the properties of $Y_n$ mentioned in Proposition~\ref{prop:Yn-structure} we are ready to state the general structure of the NLS hierarchy equations. In doing so we switch back to the more common notation of calling the unknown solution $u$ (instead of $q$).

\begin{theorem}
    For $j\in\N$, there exist coefficients $c_{k,\alpha} \in \mathbb{Z}$ for every $\alpha \in\N_0^{2k+1}$ with $|\alpha| = 2(j-k)$, for $1 \le k \le j$, such that the $j$th NLS hierarchy equation can be written as
    \begin{equation}\label{eq:nls-hierarchy-general-eq}
        i\partial_t u + (-1)^{j+1} \partial_x^{2j} u = \sum_{k=1}^j \sum_{\substack{\alpha \in \N_0^{2k+1} \\ |\alpha| = 2(j-k)}} c_{k, \alpha} \partial_x^{\alpha_1} u \prod_{\ell = 1}^{k} \partial_x^{\alpha_{2\ell}} \overline{u} \partial_x^{\alpha_{2\ell+1}} u.
    \end{equation}
\end{theorem}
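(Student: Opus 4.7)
The plan is to start from equation~\eqref{eq:general-nls-hierarchy-eqn}, which writes the $j$th hierarchy equation as $q_t = -2^{2j+1}\,\frac{\delta}{\delta r}\int q\, Y_{2j+1}\,\d{x}$, and to read off the structure of the right-hand side using Proposition~\ref{prop:Yn-structure} together with a direct Euler--Lagrange computation of $\frac{\delta}{\delta r}$. By the proposition, $Y_{2j+1}$ is a sum of monomials of total order $2j+1$ in $q$, $r$ and their derivatives, each with exactly one more $r$-factor than $q$-factor and with coefficient an integer multiple of $(2i)^{-(2j+1)}$; the unique one-factor monomial is (up to sign) $(2i)^{-(2j+1)}\partial_x^{2j} r$.

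Multiplying by $q$ balances the $r$- and $q$-counts, so every monomial of $qY_{2j+1}$ has $m$ factors of $r$, $m$ factors of $q$, and $2(j+1-m)$ derivatives distributed among them, for some $1\le m\le j+1$. Applying $\frac{\delta}{\delta r}$ and integrating by parts to shift derivatives off the varied $r$-factor removes one $r$-factor and returns a monomial with $m$ factors of $q$, $m-1$ factors of $r$, and the same total of $2(j+1-m)$ derivatives. Writing $k = m-1$ and identifying $q = u$, $r = \overline{u}$, these are precisely the monomials $\partial_x^{\alpha_1} u \prod_{\ell=1}^{k} \partial_x^{\alpha_{2\ell}} \overline{u}\, \partial_x^{\alpha_{2\ell+1}} u$ with $|\alpha| = 2(j-k)$ appearing in~\eqref{eq:nls-hierarchy-general-eq}, as $k$ ranges from $0$ (the linear term $\partial_x^{2j} u$) to $j$.

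For the coefficients I would track everything through the chain $qY_{2j+1} \mapsto \frac{\delta}{\delta r} \mapsto {}\cdot(-2^{2j+1}) \mapsto {}\cdot i$, i.e.\ overall pre-multiplication by the constant $-i\cdot 2^{2j+1}$. The identity $-i\cdot 2^{2j+1}(2i)^{-(2j+1)}=(-1)^{j+1}$, combined with the known sign of the leading monomial from item~(5) of Proposition~\ref{prop:Yn-structure}, shows that the linear term $(k=0)$ produces after the chain precisely the left-hand side $i\partial_t u + (-1)^{j+1}\partial_x^{2j} u$. Applied to each nonlinear monomial, the same chain turns the integer coefficient from Proposition~\ref{prop:Yn-structure}(4) into $c_{k,\alpha}$, which is the product of that integer with a multinomial count of which $r$-factor was differentiated, with the signs $(-1)^{|\beta|}$ coming from integration by parts, and with the universal factor $(-1)^{j+1}$; hence $c_{k,\alpha}\in\Z$.

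The main difficulty I anticipate is purely bookkeeping: keeping signs consistent so that the left-hand side prefactor comes out to $(-1)^{j+1}$ and not $(-1)^j$, and verifying integrality of the coefficients through the Euler--Lagrange manipulation. Both points can be cross-checked on the $j=1$ case, where the computation recovers $iu_t + u_{xx} = 2|u|^2 u$ as in the remark following the definition of the hierarchy, and the identical sign-and-factor pattern then recurs for general $j$.
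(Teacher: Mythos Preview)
Your proposal is correct and follows essentially the same route as the paper: both read off the monomial structure from Proposition~\ref{prop:Yn-structure}, apply the Euler--Lagrange formula for $\frac{\delta}{\delta r}$, and track the overall prefactor $-2^{2j+1}(2i)^{-(2j+1)}$ to identify the linear part and the integrality of the $c_{k,\alpha}$. If anything you are slightly more explicit than the paper in bookkeeping the coefficient chain and the derivative count $|\alpha| = 2(j-k)$; your $j=1$ cross-check is also a sensible sanity check on the sign convention in item~(5) of the proposition.
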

\begin{proof}
    Of course, we heavily rely on the structure of $Y_{2j+1}$ established in the preceding proposition.

    First we deal with the linear part of equation~\eqref{eq:nls-hierarchy-general-eq}: all monomials part of $Y_{2j+1}$ have a coefficient, that is an integer multiple of $(2i)^{-(2j+1)} = -i 2^{-2j-1}$. Keeping in mind, that the ``leading term'' of $Y_{2j+1}$ is $\partial_x^{2j}r$  and reminding the reader of the formula for calculating functional derivatives: For a smooth function $f : \C^{N+1} \to \C$ and a functional
    \begin{equation}\label{eq:functional-derivative}
    F[\phi] = \int_\R f(\phi, \partial_x\phi, \partial_x^2 \phi, \ldots, \partial_x^N \phi) \d{x} \quad\text{one has}\quad \frac{\delta F}{\delta\phi} = \sum_{k=0}^N (-1)^{k}\partial_x^k \frac{\partial f}{\partial (\partial_x^k \phi)},
    \end{equation}
    we may now establish, that the linear part of the equation must read
    \begin{equation*}
        i\partial_t u + (-1)^{j+1}\partial_x^{2j} u = 0
    \end{equation*}
    and we may ignore the rest of the coefficients of the nonlinearity, as they are only integers.

    Using (2) and (3) from Proposition~\ref{prop:Yn-structure} and the fact that~\eqref{eq:functional-derivative} reduces the number of factors $\partial_x^k r$, for a $0 \le k \le 2j$, by one, it is clear that the nonlinear terms must have between three and $2j+1$ factors. Of these, now there must be one more factor $u$ (or its derivatives) compared to $\overline{u}$ (or its derivatives), as the functional derivative reduces the number of factors $\overline{u}$ (or its derivatives) by one.

    The homogeneity of these nonlinear terms fixes the number of total derivatives to $2(j -k)$, if there are $2k+1$ factors.

    Since~\eqref{eq:nls-hierarchy-general-eq} covers all possible nonlinearities that fulfil these restrictions we have established the claim of this theorem.
\end{proof}

In our later dealings we will not be relying on any more information about the structure of the NLS hierarchy equations than is given in the previous theorem. Thus it makes sense to give a name to this general class of equations.

\begin{definition}
    For $j \in \N$, we call an equation a (higher-order) NLS-like equation, if there exist coefficients $c_{k,\alpha} \in \mathbb{Z}$ for every $\alpha \in\N_0^{2k+1}$ with $|\alpha| = 2(j-k)$, for $1 \le k \le j$, such that the equation can be written as~\eqref{eq:nls-hierarchy-general-eq}.
\end{definition}

\begin{remark}
    In a previous remark we mentioned the possibility of differentiating between the defocusing and focusing NLS hierarchy. Since the difference between the two is solely in the distribution of signs in the nonlinearity, both the defocusing and focusing NLS hierarchy equations are higher-order NLS-like equations, according to the above definition.
\end{remark}

\begin{remark}
    A natural question is whether there are further hierarchies of dispersive PDE arising as zero-curvature conditions~\eqref{eq:zcc}, possibly stemming from a different Ansatz than $\Omega = (\zeta R_0 + P)\d{x} + Q(\zeta) \d{t}$, the one we used.

    Indeed, this question is discussed in a follow-up paper~\cite{Sasaki1982-ii} to~\cite{Alberty1982-i}, where the Ansatz $\Omega = (\zeta^2 R_0 + \zeta P)\d{x} + Q(\zeta) \d{t}$ is used to derive the derivative nonlinear Schrödinger (dNLS) equation
    \begin{equation*}
        i\partial_t u + \partial_x^2 u = \pm i\partial_x (|u|^2 u).
    \end{equation*}
    and more generally its associated hierarchy of PDEs.

    The dNLS equation itself is an interesting object of study in the field of dispersive PDE, see for example~\cites{KNV2023, HKV2023, KlausSchippa2022, DNY2021} for some recent results and the references therein. The additional derivative in the nonlinearity, compared to the NLS equation~\eqref{eq:cubic-NLS}, introduces considerable difficulty in its analysis.

    A paper dealing with the well-posedness theory of the dNLS hierarchy equations is in preparation by the author.
\end{remark}

\begin{remark}\label{rem:scaling-Hs}
    Having established the structure of NLS-like equations~\eqref{eq:nls-hierarchy-general-eq}, we would like to note their associated critical regularity $s_c(j)$. This will guide us as a heuristic on our investigation of the well-posedness theory of said equations.

    In line with the scaling law of NLS, a solution $u$ of an NLS-like equation is invariant under the transformation of scale $u_\lambda(x, t) = \lambda u(\lambda x, \lambda^{2j} t)$, i.e. $u_\lambda$ is a solution of the same equation, but now with initial data $u_{0,\lambda} = \lambda u_0(\lambda x)$.

    This leads to all NLS-like equations being critical in the same space $\dot H^{-\1{2}}$ in the family of $L^2$-based Sobolev spaces as NLS itself, i.e. $s_c(j) = -\1{2}$. In fact, this is also true for the mKdV hierarchy~\cite{AGTowers}.
\end{remark}

\begin{remark}\label{rem:scaling-FL}
    As it will turn out though, no positive well-posedness results will be possible using the contraction mapping principle near the critical regularity in $L^2$ based Sobolev spaces. All our well-posedness results in the scale of spaces $H^s$ will be at fairly high regularity, supplemented by corresponding ill-posedness results to show optimality.

    Thus we turn to other scales of function spaces, in which we may keep this notion of criticality, though are able to obtain positive well-posedness results for the whole sub-critical range of spaces. In particular, we turn to Fourier-Lebesgue spaces $\hat H^s_r$ and modulation spaces $M^s_{2, p}$. Utilising these spaces for initial data has become commonplace for dispersive equations, as they allow to widen the class of functions for which well-posedness may be proven, inching further towards criticality. See~\cites{AGTowers, Klaus2023, OhWang2021, Chen2020, GrünrockHerr2008, Grünrock2005, Grünrock2004, GrünrockVega2009, DNY2021} for some examples where these spaces were successfully deployed.

    Especially for Fourier-Lebesgue spaces there is a well-defined notion of homogeneous space, in which one may ask the question of critical regularity for our NLS hierarchy equations. Using the equations' invariance, mentioned in Remark~\ref{rem:scaling-Hs}, we establish that all NLS-like equations are critical in the spaces $\hat H^s_r$ for $s_c(j, r) = -\1{r'}$ for $1 \le r \le \infty$.

    For modulation spaces though there is a much less clear notion of criticality, as the spaces are not invariant under transformations of scale, due to the isometric decomposition operators $(\Box_n)_{n \in\Z}$. Often the embedding $M^s_{2, r'} \supset \hat H^s_{r}$, for $r \le 2$, is used as guidance in the absence of criticality. Even under this notion though, we are unable to establish well-posedness with our techniques in or near the space $M^0_{2, \infty}$ (which corresponds to the critical case), paralleling results already known for the mKdV equation~\cites{OhWang2021, Chen2020}.
\end{remark}

\subsection{Generalising further}

Our later well-posedness arguments sometimes do not even rely on the particular structure of the nonlinearity in~\eqref{eq:nls-hierarchy-general-eq}, regarding the complex conjugation of factors. It is only when cubic nonlinear terms are involved, or when we are in Fourier-Lebesgue spaces, that the number of complex conjugated factors in the nonlinearity is of importance for our analysis.

We thus generalise further to an even larger class of equations.

\begin{definition}
    We call an equation a generalised (higher-order) NLS-like equation, if for $j \in \N$ there exist coefficients $c_{k,\alpha,b} \in \R$ for every $\alpha\in\N_0^{2k+1}$ with $|\alpha| = 2(j-k)$ and $b \in \set{+, -}^{2k+1}$, for $1 \le k \le j$, such that it can be written as
    \begin{equation}\label{eq:nls-like}
       i\partial_t u + (-1)^{j+1} \partial_x^{2j} u = \sum_{k=1}^j \sum_{\substack{b\in\set{\pm}^{2k+1} \\ \alpha \in \N_0^{2k+1} \\ |\alpha| = 2(j-k)}} c_{k, \alpha, b} \partial_x^{\alpha_1} v_{b_1} \prod_{\ell = 1}^{k} \partial_x^{\alpha_{2\ell}} v_{b_2} \partial_x^{\alpha_{2\ell+1}} v_{b_3},
    \end{equation}
    where each $v_{\pm}$ is to be identified with $u$ or $\overline{u}$ respectively.
\end{definition}

In short, allowing arbitrary complex conjugation in the nonlinearity of a NLS-like equation leads to the definition of generalised NLS-like equation.

\begin{remark}
    Note that the behaviour of the equations under transformations of scale does not change with this generalisation. Thus we keep the previously established critical regularity $s_c(j, r) = \1{r} - 1$ in the family of Fourier-Lebesgue spaces $\hat H^s_r$ as laid out in Remarks~\ref{rem:scaling-Hs} and~\ref{rem:scaling-FL}.
\end{remark}

\section{Statement of results}\label{sec:results}

\subsection{Prior work on higher-order equations}

Before we move on to state our main results, let us review related work. We try to be brief and thus focus on results concerning only higher-order NLS/(m)KdV equations. Giving a complete account of the history of well-posedness theory for the NLS and (m)KdV equations is beyond our scope, though we will mention some important comparative results in the two sections following the current.

Already in~\cite{Saut1979} global existence of solutions to the $j$th KdV hierarchy equation was proven, with data in high regularity Sobolev spaces $H^{k}$, $k \ge j$, using a-priori estimates provided by the structure of the hierarchy equations, together with parabolic regularisation. Positive results could be achieved in both geometries $\R$ and $\T$, though due to the techniques used, full on well-posedness was not proven, as uniqueness was left unclear.

Later, in~\cites{KPV1994-i, KPV1994-ii}, well-posedness even for a more general class of higher-order KdV like equations was proven. This was still at a comparatively high level of regularity for the initial data and was achieved using a gauge-transformation combined with linear smoothing estimates. As data spaces the weighted spaces $H^k(\R) \cap H^m(|x|^2 \d{x})$, for $k, m \in \N$ large enough, were used. It was already noted in~\cite{KPV1994-ii} that one can drop the weight, if only cubic or higher-order terms appear in the nonlinearity.

The weighted spaces (or similar alternative spaces, like Fourier-Lebesgue ones) though turn out to be indispensable in the study of the KdV hierarchy using the contraction mapping principle. This was shown in~\cite{Pilod2008}, where it was established that the higher-order equations ($j \ge 2$) of the KdV hierarchy cannot have twice continuously differentiable flow\footnote{Technically this consequence for the KdV hierarchy was noted~\cite{AGTowers}, as~\cite{Pilod2008} deals only with quadratic nonlinearities. The non-quadratic nonlinear terms though are well-behaved, so failure of smoothness of the flow carries over to the KdV hierarchy.}. In the same work it was also proven (using the contraction mapping principle), that higher dispersion KdV-like equations with quadratic nonlinearities are locally well-posed in an intersection of $H^s(\R)$, for $s > 2j + \1{4}$, with a weighted Besov space.

Most closely resembling our results and techniques is~\cite{AGTowers}, where well-posedness for the mKdV hierarchy equations (mentioned above) was derived in Fourier-Lebes\-gue spaces $\hat H^s_r(\R)$, for $s(j, r) = \frac{2j-1}{2r'}$ with $1 < r \le 2$, inching right up to the critical endpoint space $\widehat{L^1}(\R)$. These results were established using a contraction mapping argument in appropriate versions of Bourgain spaces that we use too. A partial transfer of these results to higher-order KdV type equations was possible, and appears natural due to the Miura map.

A positive result, again independent of the underlying geometry, was also proven in~\cite{KenigPilod2016}. Here the authors established well-posedness for all higher-order ($j \ge 2$) KdV hierarchy equations in $H^s$ for $s > 4j - \frac{9}{2}$. The result relies on a modified energy estimate using lower order correction terms for the energy, thus it isn't susceptible to the barrier when trying to prove well-posedness using the contraction mapping principle mentioned above.

In recent years it has also become more fashionable to utilise the underlying integrability structure of the equations in order to derive a-priori estimates. We mention~\cite{Koch2018}, where a-priori estimates for solutions of both the mKdV and NLS equations were derived, building on the earlier works~\cites{KochTataru2012, KochTataru2007} by the same authors. See also~\cite{Killip2018} for a general approach to conservation laws for integrable PDE.

Most recently published was the seminal work~\cite{KochKlaus2023}, where, relying on the integrability structure of the equations, the well-posedness of the entire KdV hierarchy in the space $H^{-1}(\R)$ was proven, as well as in $H^{j-2}(\T)$ for the $j$th equation (with dispersion order $2j+1$) in the hierarchy.

Focusing on just a single equation of the NLS hierarchy (besides NLS itself), there are only few papers dealing with low regularity well-posedness. In~\cites{Feng1994, Feng1995} the author derives global well-posedness for data in $H^s(\R)$ for $s \ge 4$ an integer\footnote{We suspect there to be a typo in the the cited works as the fourth order NLS hierarchy equation given there differs slightly from the ones given by us in Appendix~\ref{appendix:nls-hierarchy}.}. More recently in~\cite{HuoJia2005} it was proven that the fourth order equation is locally well-posed in $H^s(\R)$ for $s \ge \1{2}$ under a non-resonance condition on the coefficients of the nonlinearity. Managing to improve to local well-posedness in $H^s(\R)$ for all $s \ge \1{2}$ (without a non-resonance condition) for generalised fourth-order NLS-like equations we mention~\cite{Ikeda2021}*{Theorem~1.3}. The same paper also contains some results on fourth-order dNLS-like equations.

There also exists a rich body of literature that deals with equations that are referred to as higher-order Schrödinger equations, but differ fundamentally from what we refer to as NLS-like equations. Usually only the order of dispersion is increased or one generalises to a higher power nonlinearity $|u|^{p-1} u$, $p > 3$, compared to NLS, specifically without increasing the number of derivatives in the nonlinearity. We note the introduction of an ever increasing number of derivatives in the nonlinearity makes the analysis considerably more difficult, compared to merely upping the dispersion\footnote{Increasing just the power in the nonlinearity, at constant dispersion and if one remains in the realm of algebraic nonlinearities, also leads to more well-behaved equations. This is mirrored by our Theorem~\ref{thm:wp-hat} part (2).}; this is what we focus on as our goal is covering (at least) the equations contained within the NLS hierarchy itself.

\subsection{Main results}
As we have now established, dealing with higher-order (or higher dispersion) equations is nothing new. Though what is missing from the literature is a low-regularity well-posedness theory dealing with (generalised) higher-order NLS-like equations (i.e. that mixes higher dispersion with an appropriate number of derivative in the nonlinearity).

We hope to close this gap, at least partially, with the following theorems. For this, consider a general Cauchy problem
\begin{equation}\label{eq:general-cauchy}
    \begin{cases}
    i\partial_t u + (-1)^{j+1} \partial_x^{2j} u = F(u), \\
    u(t = 0) = u_0
    \end{cases}.
\end{equation}

\begin{theorem}\label{thm:wp-hat}
    Let $j \ge 2$ and~\eqref{eq:general-cauchy} be a higher-order NLS-like equation~\eqref{eq:nls-hierarchy-general-eq}. Then
    \begin{enumerate}
        \item if $1 < r \le 2$ and $s \ge \frac{j-1}{r'}$, the Cauchy problem~\eqref{eq:general-cauchy} for $u_0 \in \hat{H}^{s}_r(\R)$ is locally well-posed in the analytic sense,
        \item if $1 < r \le 2$, $s > -\frac{1}{r'}$ and additionally $c_{1, \alpha} = 0$ for all $\alpha \in\N_0^{3}$ (i.e. the equation contains no cubic nonlinear terms), the Cauchy problem~\eqref{eq:general-cauchy} for $u_0 \in \hat H^s_{r}(\R)$ is locally well-posed in the analytic sense.
    \end{enumerate}
\end{theorem}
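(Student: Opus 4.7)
The plan is to invoke the general local well-posedness theorem for $\hat{X}_{s,b}^r$ based Cauchy problems from~\cite{Grünrock2004}, which reduces the claim to multilinear estimates of the form
\begin{equation*}
\bignorm{\partial_x^{\alpha_1} v_1 \prod_{\ell=1}^{k} \partial_x^{\alpha_{2\ell}} v_{2\ell} \,\partial_x^{\alpha_{2\ell+1}} v_{2\ell+1}}_{\hat{X}_{s,\, b-1+}^r} \lesssim \prod_{i=1}^{2k+1} \norm{v_i}_{\hat{X}_{s, b}^r}
\end{equation*}
for some $b$ slightly larger than $1/r$, proved separately for every multi-index $(k, \alpha)$ appearing in~\eqref{eq:nls-hierarchy-general-eq} (with each $v_i$ equal to $u$ or $\overline{u}$). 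The body of the argument is the proof of these estimates, which I would carry out in Section~\ref{sec:multilinear-estimates}; once they are in hand, parts (1) and (2) follow by inspection of which values of $s$ the individual homogeneous pieces enforce.

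For the cubic piece ($k=1$, with $2(j-1)$ derivatives distributed across three factors), after a dyadic Littlewood-Paley decomposition the key tool is the resonance identity for the $2j$-th order Schrödinger flow: under the convolution constraint the combined modulation factorises and admits a lower bound of order $N_{\max}^{2j-2}$ times a product of pairwise frequency differences, away from the fully resonant diagonal. Trading this modulation gain against the $2(j-1)$ spatial derivatives in the nonlinearity, and invoking the trilinear refinement of the Fefferman-Stein estimate collected in Section~\ref{sec:linear-est}, yields the multilinear estimate exactly at the threshold $s = (j-1)/r'$, the factor $1/r'$ reflecting the $L^{r'}$-based construction of $\hat{X}_{s,b}^r$. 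This is the binding constraint that produces the regularity level in part (1).

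For the remaining pieces ($k \ge 2$) the derivative count drops to $2(j-k)$ while there are $2k+1$ factors available to absorb them. Here I would apply Hölder in $\widehat{L^r_{xt}}$, placing a few factors into Fefferman-Stein / Strichartz type spaces to harvest dispersive smoothing and the rest into $\hat{X}_{s,b}^r$ via the transfer principle, combined with the Bernstein-type inequalities~\eqref{eq:bernstein} to exchange summation exponents at a small loss. For every such homogeneous piece one obtains an estimate valid throughout the full subcritical range $s > -1/r'$. Part (2), in which no cubic term is present by assumption, then follows immediately. Part (1) follows by combining the above with the cubic estimate, since at $s \ge (j-1)/r' \ge 1/r'$ all higher-order pieces are controlled with room to spare. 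The main obstacle is the endpoint case of the cubic estimate in the resonant or near-resonant frequency regime, where the modulation gain degenerates and one must rely on the precise form of the trilinear refinement rather than on generic bilinear or linear Strichartz bounds alone.
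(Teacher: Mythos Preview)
Your overall architecture is right and matches the paper: reduce to multilinear estimates in $\hat X^r_{s,b}$ via the general local well-posedness machinery, treat the cubic piece separately from the $k\ge 2$ pieces, and observe that only the cubic term forces the threshold $s=(j-1)/r'$.

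There is, however, a substantive difference in how the $k\ge 2$ estimates are obtained. You describe a single direct argument (H\"older in $\widehat{L^r_{xt}}$ plus Fefferman--Stein/Strichartz) valid for the whole range $1<r\le 2$ and $s>-1/r'$. In the paper this is \emph{not} how the full range is reached. The direct Fefferman--Stein/bilinear-operator argument (Proposition~\ref{prop:quintic-estimate-hat}) only works for $r$ close to $1$ and only down to $s>(j-k)/(kr')$. A separate argument at $r=2$ (Proposition~\ref{prop:quintic-estimate-sobolev}) is needed, based on Kato smoothing~\eqref{eq:kato2} and the maximal function estimate~\eqref{eq:max-fct} rather than Fefferman--Stein; this reaches $s>-1/2$. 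The full range in Corollary~\ref{cor:wp-hat-estimate} is then obtained by multilinear complex interpolation between these two endpoint regimes. This interpolation step is essential and is absent from your outline; without it your H\"older/Fefferman--Stein scheme will not close for $r$ near $2$. Also, the inequalities~\eqref{eq:bernstein} you invoke are specific to the $X^p_{s,b}$ (modulation-space) setting and are not used in the $\hat X^r_{s,b}$ arguments.

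For the cubic piece your description is essentially aligned with the paper, but note that the non-resonant case in Proposition~\ref{prop:cubic-hat-estimate} is not run through a raw modulation lower bound; it is implemented via the bilinear operators $I^{\pm}_{p,j}$ and their duals $I^{\pm,*}_{\rho',j}$, composed so that their symbols absorb the $2(j-1)$ derivatives. The resonant case does use the trilinear refinement exactly as you say.
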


For $j = 1$ this result corresponds to well-posedness of NLS in Fourier-Lebesgue spaces and is already known~\cite{Grünrock2005}. The case of periodic initial data was dealt with by different authors in~\cite{OhWang2020}.

\begin{remark}
    If we restrict ourselves to the classic Sobolev spaces $H^s(\R)$ only, we can generalise further in Theorem~\ref{thm:wp-hat} part (2). Because Proposition~\ref{prop:quintic-estimate-sobolev} allows an arbitrary number of factors in the nonlinear terms to be complex conjugates, it is also true that any generalised higher-order NLS-like equation~\eqref{eq:nls-like} that contains no cubic terms in the nonlinearity is locally well-posed in $H^s(\R)$ for $s > -\1{2}$.
\end{remark}

Besides Fourier-Lebesgue spaces we are also able to prove a general well-po\-sed\-ness result for modulation spaces $M^s_{2,p}$. In the following theorem we rely less on the distribution of complex conjugates in the nonlinearity compared with Theorem~\ref{thm:wp-hat}. The attentive reader will note, that Theorem~\ref{thm:wp-modulation} deals with any generalised higher-order NLS-like equation, so long as the cubic term corresponds to the usual $u\overline{u}u$, ignoring derivatives.

\begin{theorem}\label{thm:wp-modulation}
    Let $j \ge 2$ and ~\eqref{eq:general-cauchy} be a generalised higher-order NLS-like equation~\eqref{eq:nls-like}, where $c_{1, \alpha, b} = 0$ for all $b \not= (+,-,+)$ and $\alpha \in\N_0^3$. Then for $2 \le p < \infty$ and $s = \frac{j-1}{2}$, the Cauchy problem~\eqref{eq:general-cauchy} for $u_0 \in M^{s}_{2,p}(\R)$ is locally well-posed in the analytic sense.
\end{theorem}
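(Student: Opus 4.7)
The plan is to appeal to a general local well-posedness theorem for modulation-adapted Bourgain spaces $X^p_{s,b}$, of the kind available in~\cite{OhWang2021} or derivable from the classical schemes of~\cites{Ginibre1996, GTV1997}, thereby reducing the theorem to a finite collection of multilinear estimates for the nonlinearity~\eqref{eq:nls-like}. Concretely, for each term of multiplicity $2k+1$ with $|\alpha| = 2(j-k)$, it will suffice to establish
\begin{equation*}
\Bigl\| \partial_x^{\alpha_1} v_{b_1} \prod_{\ell=1}^{k} \partial_x^{\alpha_{2\ell}} v_{b_2} \partial_x^{\alpha_{2\ell+1}} v_{b_3} \Bigr\|_{X^p_{s, b-1+}} \lesssim \prod_{i=1}^{2k+1} \norm{u_i}_{X^p_{s, b}},
\end{equation*}
with $s = (j-1)/2$ and some $b > 1/2$ chosen close to $1/2$. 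The standard $\delta^{0+}$ gain on the inhomogeneous Duhamel term then closes the contraction in the time-localised space $X^p_{s,b}(\delta)$.

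A simple regularity count splits the analysis into two regimes. Placing all $2(j-k)$ derivatives onto the highest-frequency factor, the gap between the $(2k+1)s$ coming in and the $s + 2(j-k)$ required on the output equals $(k-2)j + k$. For $k \ge 2$ (the quintic and higher-order terms) this is non-negative, and the multilinear estimate reduces to combining two Strichartz or Fefferman-Stein factors from Section~\ref{sec:linear-est} with cheap $L^\infty_{t,x}$ bounds (available since $b > 1/2$) on the remaining factors. The uniform-frequency Bernstein inequality $\norm{\Box_n f}_{L^p} \lesssim \norm{f}_{L^q}$, together with interpolation against a trivial $\ell^\infty_n$ estimate, then lifts the resulting $\ell^2_n$ bound to $\ell^p_n$ for every $p \in [2, \infty)$.

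The main obstacle is the cubic $u \bar u u$ term, where the naive derivative count falls short by exactly $j-1$ — indeed, $s = (j-1)/2$ is precisely the Sobolev scaling threshold for a cubic nonlinearity of dispersion order $2j$. All the missing smoothing must come from the resonance function
\begin{equation*}
\Phi(\xi, \xi_1, \xi_2, \xi_3) = \xi^{2j} - \xi_1^{2j} + \xi_2^{2j} - \xi_3^{2j}, \qquad \xi = \xi_1 - \xi_2 + \xi_3,
\end{equation*}
which factors as $\Phi = (\xi - \xi_1)(\xi - \xi_3)\, R$ with $R$ a symmetric polynomial of degree $2j-2$ comparable to the square of the largest input frequency away from a lower-dimensional degenerate set. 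Using the identity $\sigma - \sigma_1 + \sigma_2 - \sigma_3 = \Phi$ for the modulations, on the non-resonant region at least one of the four weights $\langle \sigma_\cdot \rangle$ is $\gtrsim |\Phi|$, and trading a power $b$ of it against $|\Phi|^b$ recovers the missing $j-1$ derivatives of smoothing. On the complementary near-resonant set, where $(\xi-\xi_1)(\xi-\xi_3)$ is small, at least one of the pairs $(u_1, \bar u_2)$ or $(\bar u_2, u_3)$ has transverse frequency support, so a bilinear refinement of Strichartz from Section~\ref{sec:linear-est} produces the square-root smoothing needed to close the estimate.

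The principal technical difficulty, and where I expect most of the effort to lie, is carrying out this cubic analysis in the $\ell^p_n$ topology of $X^p_{s,b}$ rather than in the cleaner $\ell^2_n$ Sobolev case. After the uniform-frequency decomposition and the resonant/non-resonant split, one has to sum over all quadruples $(n, n_1, n_2, n_3)$ obeying the near-convolution constraint $n \approx n_1 - n_2 + n_3$ with only a $0-$ loss in the output $\ell^p_n$ norm. The natural strategy is to identify the largest of the four frequencies, peel off an $\ell^p$-triangle in that variable (paired with the output packet), and estimate the remaining two indices through a Cauchy-Schwarz bilinear bound on the trilinear convolution; the restriction $p < \infty$ enters exactly here through the duality $\left( X^p_{s,b} \right)' \cong X^{p'}_{-s,-b}$ used to transfer the smoothing from $\Phi$ onto the target function.
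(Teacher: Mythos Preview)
Your reduction to multilinear estimates in $X^p_{s,b}$ via a general well-posedness scheme is exactly the paper's route. The treatment of quintic and higher-order terms also matches in spirit: the paper proves the $p=2$ endpoint (Proposition~\ref{prop:quintic-estimate-sobolev}, valid for all $s>-\tfrac12$) and a $p=\infty$ endpoint (Proposition~\ref{prop:quintic-estimate-modulation}, valid for $s>\tfrac{1}{4k}$) using Kato smoothing and the maximal function estimate rather than Strichartz, then interpolates. Your derivative count and ``lift from $\ell^2$ to $\ell^p$'' is a compressed version of the same interpolation.

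For the cubic term the two approaches diverge more substantially. You organise the argument around the factorisation of the resonance function $\Phi$ and plan to use a single bilinear Strichartz refinement on the near-resonant set. The paper instead deploys the explicit bilinear operators $I^\pm_{p,j}$ of Section~\ref{sec:linear-est} together with a genuinely \emph{trilinear} refinement (Corollary~\ref{cor:interpolated-trilinear}), and the latter is what carries the most dangerous subcase: the exact diagonal $n_1=-\ell$, $n_2=n_3=n=\ell$ in Proposition~\ref{prop:cubic-mod-estimate}, where $\Phi=0$ identically and the convolution constraint collapses to a single sum over $\ell\in\Z$. Here your bilinear plan has a gap: although one pair is indeed transverse as you claim, after placing that pair in $L^2_{xt}$ you must still control the remaining product $u_{-\ell}\,\overline{v}_{\ell}$, and the bilinear estimate~\eqref{eq:bilin-est} puts the dual function $v$ in $X_{0,b}$ with $b>\tfrac12$ rather than the $X_{0,-b'}$ you need. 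The paper resolves this issue in the non-resonant cases by interpolating~\eqref{eq:bilin-est} against the crude bound~\eqref{eq:simple-bilin}, but on the diagonal it sidesteps the problem entirely via the trilinear estimate. Your sketch would need one of these two fixes made explicit.

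Your final paragraph correctly locates the hard $\ell^p_n$ summation; the paper's concrete device is Lemma~\ref{summation-lemma}, which trades an off-diagonal $|m-n|^{-1}$ decay (coming from the bilinear symbols) against an arbitrarily small $\langle n\rangle^{-\varepsilon}$ loss to close a Young-type convolution in $\ell^p\times\ell^{p'}$. The duality framework you describe is right, but it is this specific decay, extracted from two applications of the interpolated bilinear estimate, that makes the sums in the semi-resonant subcases converge.
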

Again, for $j = 1$ this result is essentially\footnote{Particularly for large $p \ge 3$ the continuity of the solution is an issue. This was pointed out in~\cite{Pattakos2019}, where at least for $1 < p < 3$ continuity of the solutions was established.} already known~\cites{Guo2017, Pattakos2019, Klaus2023} and in the periodic case from~\cite{OhWang2020}.

\begin{remark}
    For Theorem~\ref{thm:wp-modulation} a similar second part as with Theorem~\ref{thm:wp-hat} could be stated, though here seems of much less value. It would be that, if~\eqref{eq:general-cauchy} is a generalised higher-order NLS-like equation, but contains no cubic terms (i.e. $c_{1, \alpha, b} = 0$ for all $b \in \set{\pm}^3$ and $\alpha \in\N_0^3$) and $s > \frac{1}{4k} - \frac{2k+1}{2k}\frac{1}{p}$, the Cauchy problem~\eqref{eq:general-cauchy} for $u_0 \in M^s_{2,p}(\R)$ is locally well-posed in the analytic sense.
\end{remark}

\begin{remark}\label{rem:hat-space-complex-conj}
    Of note is the differing influence of the distribution of complex conjugates on the well-posedness results we state in the above theorems. To quickly recap: for the cubic terms the canonical $|u|^2u$ (ignoring  derivatives) is necessary in both Fourier-Lebesgue and modulation space settings. For the higher-order nonlinear terms though the distribution of complex conjugates can be chosen arbitrarily in the modulation space setting, whereas in Fourier-Lebesgue spaces the canonical distribution was stated necessary in Theorem~\ref{thm:wp-hat}.

    This is more restrictive than would be necessary considering our proof of Theorem~\ref{thm:wp-hat}, in particular Proposition~\ref{prop:quintic-estimate-hat}. Looking into the details, one finds that in fact also in the Fourier-Lebesgue space setting an arbitrary distribution of complex conjugates for the higher-order nonlinear terms okay. More details on the necessary changes to the arguments given in the proof of Proposition~\ref{prop:quintic-estimate-hat} are given in Remark~\ref{rem:hat-space-complex-conj-proof}.
\end{remark}

For proving our well-posedness theorems we rely on multilinear estimates in $\hat X_{s,b}^r$ (see Proposition~\ref{prop:cubic-hat-estimate} and Corollary~\ref{cor:wp-hat-estimate}) and $X_{s,b}^p$ (see Proposition~\ref{prop:cubic-mod-estimate} and Corollary~\ref{cor:wp-modulation-estimate}) spaces which combined with the contraction mapping theorem lead to local well-posedness in these spaces. Using such estimates to derive local well-posedness results is a well-known technique, so we omit the specifics. They were pioneered in~\cites{Bourgain1993-1, Bourgain1993-2} and we direct the interested reader to~\cites{Grünrock2004, AGDiss} for a self-contained review of such techniques in more contemporary notation.

Contrasting the positive results above, we are also able to prove the following ill-posedness results for initial data in Fourier-Lebesgue spaces.

\begin{theorem}\label{thm:illposed-hat}
    For $j \ge 2$, $1 < r \le 2$ and $-\frac{1}{r'} < s < \frac{j-1}{r'}$ there exists a NLS-like equation (i.e. choice of coefficients $c_{k,\alpha,b} \in \R$) such that for the Cauchy problem~\eqref{eq:general-cauchy} the flow-map $S : \hat H^s_r(\R) \times (-T, T) \to \hat H^s_r(\R)$ cannot be uniformly continuous on bounded subsets.
\end{theorem}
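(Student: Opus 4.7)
The plan is to follow the classical Kenig-Ponce-Vega / Christ-Colliander-Tao paradigm: for a carefully chosen NLS-like equation, construct two sequences of approximate solutions with bounded initial data in $\hat H^s_r(\R)$, whose initial difference tends to zero but whose difference at some positive time stays of order one in $\hat H^s_r$. The lower bound $s > -\1{r'}$ is precisely the scaling-critical threshold from Remark~\ref{rem:scaling-FL}, which is what keeps our high-frequency wave packets bounded in $\hat H^s_r$.

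Fix $c \ne 0$ and specialise to the NLS-like equation
\begin{equation*}
    i\partial_t u + (-1)^{j+1}\partial_x^{2j}u = c\,|u|^2\partial_x^{2(j-1)}u,
\end{equation*}
which is of the form~\eqref{eq:nls-like} with only one nonzero cubic coefficient. Let $\phi \in \mathcal{S}(\R)$ be real, with $\phi \equiv 1$ on $[-\1{2},\1{2}]$ and $\supp\phi \subset [-1,1]$. For $A > 0$ and $N \gg 1$, set $B := N^{j-1}$, $v := 2jN^{2j-1}$ (the linear group velocity at frequency $N$), and consider the WKB ansatz
\begin{equation*}
u^N_A(x,t) := A\,\phi\!\left(\tfrac{x-vt}{B}\right)\exp\!\left(i(Nx - N^{2j}t) - ic(-1)^{j-1}N^{2(j-1)}A^2\,\phi^2\!\left(\tfrac{x-vt}{B}\right)t\right).
\end{equation*}
Passing to the moving frame $y = x-vt$ shows this is an approximate solution: the travelling envelope absorbs the linear group motion, the final phase factor reproduces the pointwise cubic self-phase modulation predicted by the plane-wave analysis of the equation, and the residual $E^N_A := (i\partial_t + (-1)^{j+1}\partial_x^{2j})u^N_A - c|u^N_A|^2\partial_x^{2(j-1)}u^N_A$ consists entirely of terms in which at least one derivative lands on the slow functions $\phi$ or $\phi^2$, each such derivative producing a factor $B^{-1} = N^{-(j-1)}$.

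Normalise $A = A_N := B^{-1/r}N^{-s}$, so that a direct Fourier computation yields $\|u^N_A(0)\|_{\hat H^s_r} \sim A_N B^{1/r} N^s \sim 1$. Take $A_1 := A_N$ and $A_2 := A_N(1+\epsilon_N)$ with $\epsilon_N := C\,N^{2(s - (j-1)/r')}$. Since $s < \frac{j-1}{r'}$,
\begin{equation*}
    \|u^N_{A_1}(0) - u^N_{A_2}(0)\|_{\hat H^s_r} \sim \epsilon_N \xrightarrow{N\to\infty} 0,
\end{equation*}
while at any fixed $t_* > 0$ small enough that the dispersive spreading $\sim B^2/N^{2(j-1)} = 1$ has not yet destroyed the packet, the nonlinear phase difference on the central plateau $\phi \equiv 1$ equals
\begin{equation*}
c(-1)^{j-1}N^{2(j-1)}(A_1^2 - A_2^2)\,t_* \approx -2c(-1)^{j-1}\epsilon_N\,N^{2(j-1)/r' - 2s}\,t_* = -2c(-1)^{j-1}C\,t_*,
\end{equation*}
using the identity $N^{2(j-1)}A_N^2 = N^{2(j-1)/r' - 2s}$. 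Picking $C, t_*$ so that this phase has modulus $\sim \pi$ yields $\|u^N_{A_1}(t_*) - u^N_{A_2}(t_*)\|_{\hat H^s_r} \gtrsim 1$ by the same Fourier computation used for the initial norm.

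The main technical obstacle is promoting these approximate solutions to honest ones: one must show $\|v^N_A(t_*) - u^N_A(t_*)\|_{\hat H^s_r} = o(1)$ where $v^N_A$ denotes the true solution with initial data $u^N_A(0)$. This is achieved by bounding $E^N_A$ in an appropriate dual Bourgain norm — each of its terms loses at least a factor $N^{-(j-1)}$ relative to the leading $O(N^{2(j-1)/r' - 2s})$ self-interaction, so the relative residual vanishes in the range $s < \frac{j-1}{r'}$ — and then closing a Duhamel/Gronwall loop in $\hat X^r_{s,b}(t_*)$ via the multilinear machinery from Section~\ref{sec:multilinear-estimates} underlying Theorem~\ref{thm:wp-hat}(1). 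Since $\|u^N_A(0)\|_{\hat H^\sigma_r}$ grows like $N^{\sigma - s}$ for $\sigma > \frac{j-1}{r'}$, the high-regularity existence time shrinks with $N$, forcing $t_*$ to shrink as well; the two scaling constraints (initial difference tending to zero and phase separation of order one) nevertheless remain simultaneously satisfiable throughout the full range $s < \frac{j-1}{r'}$. A final triangle inequality then contradicts any assumed uniform continuity of the flow on the bounded set $\{u^N_{A_k}(0)\}_{N,k}$.
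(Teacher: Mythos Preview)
Your approach is conceptually different from the paper's and, as written, has a genuine gap.

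The paper does \emph{not} use approximate solutions. Instead, Lemma~\ref{solution-family} exhibits a one-parameter family of \emph{exact} solutions $u_N(x,t)=e^{i(Nx+\delta_0 t)}\sech(x-c_0 t)$ to a specific NLS-like equation (whose cubic coefficients are read off from the $\sech$-identities~\eqref{eq:sech-identities}), which is then extended to a two-parameter family via scaling in Corollary~\ref{cor:ip-solution-family}. The ill-posedness mechanism is the \emph{spatial separation} of two such solitons with slightly different frequencies $N_1,N_2\sim N$, $|N_1-N_2|=\tfrac{C}{T}N^{sr'-2(j-1)}$, propagating at speeds $\sim N_k^{2j-1}$; the scaling parameter $\omega=N^{-sr'}$ is tuned so that the rescaled solitons have unit $\hat H^s_r$-norm and width comparable to their spatial separation at time $T$. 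Because the solutions are exact, no error control or shrinking time interval enters the argument at all.

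Your phase-decoherence scheme, by contrast, hinges on the unjustified sentence ``the two scaling constraints \ldots\ nevertheless remain simultaneously satisfiable throughout the full range $s<\frac{j-1}{r'}$''. This is where the argument breaks: the nonlinear phase difference you compute equals a fixed constant times $t_*$, so it can only be of order one if $t_*$ stays bounded below. But you correctly observe that $\|u^N_A(0)\|_{\hat H^\sigma_r}\sim N^{\sigma-s}$ diverges for $\sigma\ge\frac{j-1}{r'}$, so the existence time furnished by Theorem~\ref{thm:wp-hat} at that regularity shrinks like a negative power of $N$. Compensating by enlarging $\epsilon_N$ destroys the requirement $\epsilon_N\to 0$. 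A back-of-envelope check shows that if the LWP time scales like $\|u_0\|_{\hat H^\sigma_r}^{-\beta}$, you would need $\beta<2$, which the cubic contraction estimate does not provide. Closing this loop would require a substantially more refined argument (e.g.\ a CCT-style small-dispersion rescaling combined with an ODE-stability analysis at high regularity), none of which is indicated. The paper's use of exact solitons sidesteps this difficulty entirely.
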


And with initial data in modulation spaces, the situation is similar.

\begin{theorem}\label{thm:illposedness-modulation}
    For $j \ge 2$, $2 \le p \le \infty$ and $0 \le s < \frac{j-1}{2}$ there exists a NLS-like equation (i.e. choice of coefficients $c_{k,\alpha,b} \in \R$) such that for the Cauchy problem~\eqref{eq:general-cauchy} the flow-map $S : M^s_{2,p}(\R) \times (-T, T) \to M^s_{2,p}(\R)$ cannot be uniformly continuous on bounded subsets.
\end{theorem}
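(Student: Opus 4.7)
The approach is a Kenig--Ponce--Vega style non-uniform continuity argument, adapted from the classical Sobolev setting to modulation spaces. The strategy is to pick a concrete NLS-like equation with maximal derivative-concentration in its cubic nonlinearity, and then build a one-parameter family of wavepacket approximate solutions whose self-interaction phase modulation scales strongly with the carrier frequency; arbitrarily small perturbations of the amplitude parameter will thereby induce an $O(1)$ phase separation at fixed positive time in the $M^s_{2,p}$-norm.

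Concretely, I would work with the admissible equation
\begin{equation*}
    i\partial_t u + (-1)^{j+1}\partial_x^{2j} u = (\partial_x^{2(j-1)} u)\,\overline{u}\, u,
\end{equation*}
obtained by setting $c_{1,(2(j-1),0,0),(+,-,+)} = 1$ in~\eqref{eq:nls-like} and every other coefficient to zero. On a plane wave $A e^{iNx}$ this nonlinearity contributes a phase of order $|A|^2 N^{2(j-1)}$, which is the strongest $N$-scaling allowed by the homogeneity constraint $|\alpha| = 2(j-1)$. Fix a Schwartz bump $\phi$ with $\norm{\phi}_{L^2}=1$, put $R := N^\beta$ with $\beta > 0$ to be chosen, and consider
\begin{equation*}
    \tilde u^{(\lambda)}(t,x) := \lambda N^{-s} R^{-\1{2}} \phi\paren{\frac{x-vt}{R}} e^{i(Nx - \omega_\lambda t)},
\end{equation*}
with $v := 2j N^{2j-1}$ the linear group velocity and $\omega_\lambda := N^{2j} + c_\phi \lambda^2 N^{2(j-1)-2s} R^{-1}$ tuned so that both the $k=0$ and the transport $k=1$ terms in the Taylor expansions of $\partial_x^{2j}(\psi e^{iNx})$ and of the nonlinearity cancel. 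For $R \geq 1$ the Fourier support of $\tilde u^{(\lambda)}(t,\cdot)$ sits essentially in a single modulation box, so $\norm{\tilde u^{(\lambda)}(t,\cdot)}_{M^s_{2,p}} \sim \lambda$.

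The residual of $\tilde u^{(\lambda)}$ in the PDE is of order $N^{2j-2} R^{-2}$, coming from the $k=2$ Taylor term; a standard Duhamel estimate for the difference $u^{(\lambda)} - \tilde u^{(\lambda)}$ then yields a discrepancy of order $\lambda^3 t\, N^{2j-2-2\beta}$ in $M^s_{2,p}$, which is negligible provided $\beta > j-1$. On the other hand, for $\lambda_1, \lambda_2 \in [1,2]$ with $\delta := |\lambda_1 - \lambda_2|$ one has
\begin{equation*}
    \bignorm{\tilde u^{(\lambda_1)}(t) - \tilde u^{(\lambda_2)}(t)}_{M^s_{2,p}} \sim \abs{\lambda_1 e^{-i\omega_{\lambda_1} t} - \lambda_2 e^{-i\omega_{\lambda_2} t}},
\end{equation*}
with phase gap $\omega_{\lambda_1} - \omega_{\lambda_2} \sim \delta\, N^{2(j-1)-2s-\beta}$; choosing $\delta := N^{2s-2(j-1)+\beta}/t$ makes this modulus of order one while simultaneously forcing $\delta \to 0$ as $N \to \infty$, provided $\beta < 2(j-1) - 2s$. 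The two constraints on $\beta$ are compatible precisely when $s < (j-1)/2$, which is the theorem's range; the lower bound $s \geq 0$ is only included to separate the new content from the $j=1$ case.

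The main obstacle is the rigorous propagation of the residual in $M^s_{2,p}$. One has to verify that the Duhamel evolution of the error remains concentrated in $O(1)$ modulation boxes (the dispersion causes a large group-velocity drift but essentially no frequency spreading on the time scales $t \lesssim 1$), and that the $\ell^p$-summation over modulation boxes does not amplify the bound. A careful accounting of derivatives hitting the spatial cutoff at each order of the Taylor expansion, exploiting the gain $R^{-1}$ per derivative, is what closes this loop. For the endpoint $p = \infty$ a direct projection onto the carrier modulation box at $n = N$ circumvents any duality loss.
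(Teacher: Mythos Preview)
Your residual accounting is incomplete, and the omission is fatal to the scheme. You claim the residual of $\tilde u^{(\lambda)}$ is of order $N^{2j-2}R^{-2}$ from the $k=2$ linear Taylor term, but you have not dealt with the leading \emph{nonlinear} residual. The nonlinearity applied to $\tilde u^{(\lambda)}$ produces, at top order in $N$, a term proportional to $N^{2(j-1)}|A|^2 A\,\phi^3 e^{iNx}$, whereas your phase choice $\omega_\lambda$ can only cancel a multiple of $\phi$ on the linear side. The uncancelled piece $N^{2(j-1)}|A|^2 A\,(\phi^3 - c_\phi\phi)e^{iNx}$ has $M^s_{2,p}$-size, after Duhamel, of order $t\,\lambda^3 N^{2(j-1)-2s-\beta}$. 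For your phase-gap argument you need $\delta \sim N^{-(2(j-1)-2s-\beta)}\to 0$, i.e.\ $\beta < 2(j-1)-2s$; but then the nonlinear residual blows up and swamps the $O(1)$ separation you are trying to detect. Conversely, controlling the residual forces $\beta > 2(j-1)-2s$, killing the phase decoherence. There is no admissible $\beta$.

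This is the familiar obstruction to running an amplitude-modulation argument on the line at nonnegative regularity: plane waves are exact solutions on $\T$ (this is precisely the mechanism behind Proposition~\ref{prop:ip-torus-low-regularity}), but the envelope needed on $\R$ introduces a profile error at the same scale as the effect. The paper sidesteps this entirely. It constructs a family of \emph{exact} sech-profile solutions $u_N(x,t)=e^{i(Nx+\delta_0 t)}\sech(x-c_0 t)$ to a specific NLS-like equation (Lemma~\ref{solution-family}), rescales to a two-parameter family (Corollary~\ref{cor:ip-solution-family}), and then compares solutions at two nearby \emph{frequencies} $N_1,N_2\sim N$ rather than two nearby amplitudes. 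The decoherence mechanism is physical separation in space, driven by the $O(N^{2j-1})$ group velocity, and the modulation-space lower bound at time $T$ is obtained by restricting to frequency boxes $|n-N|\ll N^\theta$ following~\cite{OhWang2021}. Because the solutions are exact there is no residual to control, and the constraint $s<\frac{j-1}{2}$ emerges from balancing the initial proximity against the eventual separation rather than from a competition between residual and signal.
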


Thus far we have only stated results about the well-posedness theory on the line~$\R$. Regarding the torus~$\T$, it seems no positive result is possible without additional arguments, like renormalizing the equation or moving to a weaker sense of well-posedness.

\begin{theorem}\label{thm:illposedness-torus-C3}
    For any $1 \le r \le \infty$ and $s \in\R$ the flow-map $S : \hat H^s_r(\T) \times (-T, T) \to \hat H^s_r(\T)$ of the Cauchy problem for the fourth-order NLS hierarchy equation ($j = 2$) cannot be three times continuously differentiable.
\end{theorem}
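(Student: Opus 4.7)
The plan is the standard testing-against-explicit-initial-data scheme for $C^3$-failure. If $S_t$ were $C^3$ in a neighborhood of $0 \in \hat H^s_r(\T)$, then by Duhamel's formula its third Gâteaux derivative at the origin would coincide (up to a combinatorial constant) with the trilinear operator
\[
A_3[u_0](t) := -i\int_0^t U(t-s)\, N_3\bigl(U(s)u_0\bigr)\,\d{s},
\]
where $U(t) = e^{-it\partial_x^4}$ is the linear propagator for $j=2$ and $N_3$ is the cubic part of the equation's nonlinearity. In particular one would obtain a bound $\|A_3[u_0](t)\|_{\hat H^s_r(\T)} \lesssim \|u_0\|_{\hat H^s_r(\T)}^3$ uniformly for $t$ in a fixed neighborhood of $0$. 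The plan is to violate this on the two-mode family
\[
u_0^{(N)}(x) = e^{ix} + N^{-s} e^{iNx}, \qquad N\in\N,
\]
for which $\|u_0^{(N)}\|_{\hat H^s_r(\T)} \sim 1$ uniformly in $N$ and $r$.

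The key ingredient is the resonance function $\Omega = \xi^4 - \xi_1^4 + \xi_2^4 - \xi_3^4$ under the convolution constraint $\xi = \xi_1 - \xi_2 + \xi_3$. A direct computation (setting $S = \xi + \xi_2 = \xi_1 + \xi_3$ and symmetrising) gives the factorisation
\[
\Omega = 2(\xi-\xi_1)(\xi-\xi_3)\Bigl[\tfrac{3}{2}(\xi+\xi_2)^2 + \tfrac{1}{4}(\xi-\xi_2)^2 + \tfrac{1}{4}(\xi_1-\xi_3)^2\Bigr],
\]
with the bracket strictly positive away from zero. Enumerating $(\xi_1,\xi_2,\xi_3) \in \{1,N\}^3$ with $\xi_1 - \xi_2 + \xi_3 = N$, exactly the three triplets $(1,1,N)$, $(N,1,1)$, $(N,N,N)$ contribute to $\widehat{A_3[u_0^{(N)}]}(N,t)$, and each is resonant ($\Omega = 0$), producing a secular time factor $t$. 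The cubic part of the $j=2$ NLS hierarchy equation contains a monomial of the form $c\cdot u\overline u\partial_x^2 u$ with $c\neq 0$, as can be read off the explicit formula in Appendix~\ref{appendix:nls-hierarchy} combined with~\eqref{eq:functional-derivative}. Its Fourier symbol $-c\xi_3^2$ evaluated at $(1,1,N)$ equals $-cN^2$; multiplied by the data factor $\hat u_0^{(N)}(1)\overline{\hat u_0^{(N)}(1)}\hat u_0^{(N)}(N) = N^{-s}$ and the secular $t$, this produces a contribution $\sim ct\,N^{2-s}$ to $\widehat{A_3[u_0^{(N)}]}(N,t)$. The remaining competing terms — from $(N,1,1)$ (symbol $-c$), from $(N,N,N)$ (data weight $N^{-3s}$), and from the other cubic monomials $\partial^{\alpha_1}u\partial^{\alpha_2}\overline u\partial^{\alpha_3}u$ with $|\alpha|=2$ in $N_3$ (contributing at most like $N^{1-s}$ at this interaction) — are of strictly smaller order in $N$ and cannot absorb the leading term.

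Combining these estimates yields $|\widehat{A_3[u_0^{(N)}](t)}(N)| \gtrsim t\,N^{2-s}$, whence
\[
\bigl\|A_3[u_0^{(N)}](t)\bigr\|_{\hat H^s_r(\T)} \ge \langle N\rangle^s\,|\widehat{A_3[u_0^{(N)}](t)}(N)| \gtrsim t\,N^2 \xrightarrow{N\to\infty} \infty,
\]
contradicting the $C^3$-hypothesis and proving the theorem for any $s \in \R$ and any $r \in [1,\infty]$. The principal obstacle in this plan is the algebraic verification of the last step, namely that the combined coefficient of the $u\overline u\partial_x^2 u$-symbol at the $(1,1,N)$-interaction is genuinely nonzero after summing all cubic monomials produced by functionally differentiating the quartic part of the conserved quantity $H_4$, i.e.\ that no miraculous cancellation annuls the leading $N^2$-factor. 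Should such a cancellation occur one has the flexibility to retreat to a different resonant triplet — e.g.\ $(N,N,1)\to N$ or $(1,N,N)\to 1$ — for which a parallel symbol analysis produces the same qualitative growth of order $tN^2$ in the weighted norm.
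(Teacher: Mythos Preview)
Your proposal is correct and follows essentially the same route as the paper: two-mode data at frequencies $1$ and $N$ with weight $N^{-s}$ on the high mode, identification of the three resonant triples at output frequency $N$, and the resulting $tN^{2}$ lower bound on the $\hat H^s_r$ norm of the third derivative of the flow. The only substantive difference is in how the ``no miraculous cancellation'' issue you flag is dispatched. The paper does not isolate a single dominant monomial as you do; instead it computes the full symmetrised symbol of the cubic part of the $j=2$ hierarchy nonlinearity and finds the closed form
\[
n_3(k_1,k_2,k_3)=(k_1+k_2)^2+\tfrac{3}{2}(k_1+k_3)^2,
\]
a sum of squares that is manifestly $\sim N^{2}$ at each of the three resonant triples. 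This removes your residual worry about cancellation in one line, whereas your term-by-term dominance argument (though correct for the $(1,1,N)$ interaction, since only $u\overline{u}\partial_x^2 u$ contributes a $\xi_3^2$ there) is more laborious and leaves you hedging about fallback triples. Your explicit factorisation of the resonance function $\Omega$ is a nice touch that the paper omits, but it is not strictly needed once one restricts to the finite list of interactions coming from two-mode data.
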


Looking at lower regularities only, we may generalise to large $j$ as well. In this case the flow becomes even more irregular:

\begin{theorem}\label{thm:illposedness-torus-C0}
    For $j \ge 2$, $1 \le r \le \infty$ and $s < j - 1$ there exists a NLS-like equation (i.e. choice of coefficients $c_{k,\alpha,b} \in \R$) such that for the Cauchy problem~\eqref{eq:general-cauchy} the flow-map $S : \hat H^s_r(\T) \times (-T, T) \to \hat H^s_r(\T)$ cannot be uniformly continuous on bounded subsets.
\end{theorem}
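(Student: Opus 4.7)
The plan is to carry out a Kenig--Ponce--Vega style phase-decorrelation argument, mirroring what is known for NLS on $\T$: I would construct two one-parameter families of explicit plane-wave solutions whose initial data remain bounded and asymptotically coincide in $\hat H^s_r(\T)$, while the solutions themselves separate by an $O(1)$ amount at a time $t_n \to 0$, which eventually lies in $(-T, T)$.

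First I would single out a convenient equation within the class, namely the one with the single cubic nonlinear term indexed by $k=1$, $\alpha = (j-1, j-1, 0)$, $b = (+,-,+)$, setting all other coefficients to zero. This reads
\begin{equation*}
    i\partial_t u + (-1)^{j+1}\partial_x^{2j} u = \partial_x^{j-1} u \cdot \partial_x^{j-1} \overline{u} \cdot u.
\end{equation*}
Because $(in)^{j-1}(-in)^{j-1} = n^{2(j-1)}$ is real, the ansatz $u(x,t) = A\, e^{inx - i\omega(n,A) t}$ on the one-dimensional subspace $\operatorname{span}(e^{inx})$ reduces the PDE to a scalar ODE whose unique solution preserves $|A(t)|$. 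One obtains the exact global smooth solution with $\omega(n,A) = n^{2j} + |A|^2 n^{2(j-1)}$, sidestepping any a priori question of well-posedness: we simply compare two closed-form solutions.

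Next I would fix $\alpha_1 = 1$, $\alpha_2 = 1 + \delta_n$ with $\delta_n = n^{-\eta}$ for some $0 < \eta < 2(j-1-s)$, and set $A_j^{(n)} = \alpha_j n^{-s}$. Direct computation gives $\|u_0^{(j,n)}\|_{\hat H^s_r(\T)} \sim \alpha_j$ (uniformly bounded) and $\|u_0^{(1,n)} - u_0^{(2,n)}\|_{\hat H^s_r(\T)} \sim \delta_n \to 0$. The corresponding solutions differ by $n^{-s}(\alpha_1 e^{-i\omega_1 t} - \alpha_2 e^{-i\omega_2 t}) e^{inx}$, where $\omega_j - n^{2j} = \alpha_j^2 n^{2(j-1)-2s}$. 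Choosing $t_n$ so that $(\omega_2 - \omega_1) t_n = \pi$ gives $t_n \sim n^{\eta - 2(j-1-s)} \to 0$, in particular $t_n \in (0,T)$ for $n$ large, and at this time
\begin{equation*}
    \|u^{(1,n)}(t_n) - u^{(2,n)}(t_n)\|_{\hat H^s_r(\T)} \sim |\alpha_1 + \alpha_2| \to 2.
\end{equation*}
This rules out uniform continuity of the flow on the bounded set where our data live.

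The hypothesis $s < j-1$ enters exactly once: it makes the exponent $2(j-1) - 2s$ strictly positive, which is what allows $\delta_n \to 0$ while still keeping the nonlinear phase gap $\omega_2 - \omega_1$ large enough to rotate the two solutions apart within time $T$. The only real obstacle is to pick the cubic monomial so that its plane-wave evaluation yields a \emph{real} phase correction proportional to $|A|^2 n^{2(j-1)}$; the choice $b = (+,-,+)$ above accomplishes exactly this, and the remainder of the argument is uniform in $r \in [1, \infty]$ because everything lives in a single Fourier mode, so the choice of summability exponent is immaterial.
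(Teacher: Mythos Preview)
Your argument is correct and follows essentially the same route as the paper: a Kenig--Ponce--Vega phase-decorrelation argument using explicit plane-wave solutions on $\T$, with two amplitude parameters converging to each other while the nonlinear phases decohere. The only difference is cosmetic: the paper picks the cubic monomial $|u|^2\partial_x^{2j-2}u$ (i.e.\ $\alpha=(2j-2,0,0)$) rather than your $|\partial_x^{j-1}u|^2 u$, but both produce a real phase correction of size $|A|^2 n^{2(j-1)}$ on the mode $e^{inx}$, and the remainder of the argument (choice of $t_n$, norm computations, role of $s<j-1$) is identical.
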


\subsection{Global well-posedness for the NLS hierarchy}

\begin{theorem}
    The solutions constructed in Theorem~\ref{thm:wp-hat} for initial data $u_0 \in H^s(\R)$, for $s \ge \frac{j-1}{2}$, extend globally in time.
\end{theorem}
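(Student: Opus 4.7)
The plan is to combine the local theory of Theorem~\ref{thm:wp-hat} at $r = 2$ with a priori $H^s$ bounds supplied by the integrable structure of the equations. Since $H^s(\R) = \hat H^s_2(\R)$, part~(1) of Theorem~\ref{thm:wp-hat} provides a local solution $u \in C([-T, T]; H^s)$ whose lifespan $T = T(\norm{u_0}_{H^s}) > 0$ depends only on the data norm. A standard continuation argument then reduces global existence to showing that, on the maximal interval of existence $I_{\max}$, the norm $\norm{u(t)}_{H^s}$ remains bounded on every finite subinterval of $I_{\max}$.

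This a priori bound is supplied by the family of (quasi-)conserved functionals constructed by Koch~\cite{Koch2018} for the NLS hierarchy. The key feature is a coercive functional at the critical regularity $s = \frac{j-1}{2}$ that is comparable to $\norm{u(t)}_{H^{(j-1)/2}}^2$ and that is conserved along the flow of the $j$th hierarchy equation; this directly yields the desired bound when $s = \frac{j-1}{2}$. For initial data in $H^s$ with $s > \frac{j-1}{2}$ we either apply the analogous Koch functional at level $s$ in the same tower, or combine the critical bound with a persistence-of-regularity estimate derived from the multilinear estimates driving Theorem~\ref{thm:wp-hat}, upgrading control of $H^{(j-1)/2}$ to control of $H^s$ with at worst Grönwall-type growth in $t$.

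The main delicacy is the rigorous transfer of the conservation identities, which are naturally proven only for smooth solutions, down to the low-regularity solutions produced by Theorem~\ref{thm:wp-hat}. The strategy is the standard density argument: approximate $u_0$ by Schwartz data $u_0^{(n)}$, exploit the uniform-in-$n$ lifespan (which depends only on $\norm{u_0^{(n)}}_{H^s} \to \norm{u_0}_{H^s}$) to produce smooth solutions $u^{(n)}$ on a common time interval, apply the conservation law to each $u^{(n)}$, and pass to the limit using the continuous dependence in $H^s$ furnished by the local theory. Once the a priori bound is obtained unconditionally, iterating the local theorem on successive intervals of length $T(C)$ yields the claimed global solution.
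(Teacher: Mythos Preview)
Your overall strategy --- combine the local theory with a priori $H^s$ bounds coming from~\cite{Koch2018} and iterate --- is the same as the paper's. The density and persistence-of-regularity remarks you add are standard and fine.

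However, you are glossing over precisely the point the paper's proof is devoted to. The results in~\cite{Koch2018} establish conserved quantities controlling $\norm{u(t)}_{H^s}$ for $s > -\tfrac12$ under the flows of \emph{NLS and complex mKdV}, not a priori under the flow of the $j$th hierarchy equation. You write that Koch constructed these functionals ``for the NLS hierarchy'' and that the functional ``is conserved along the flow of the $j$th hierarchy equation'' as if this were already contained in~\cite{Koch2018}; it is not. The substance of the paper's argument is to explain \emph{why} those same quantities are conserved by every flow in the hierarchy: the functionals in~\cite{Koch2018} are built from the transmission coefficient of the scattering problem $\partial_x v = (\zeta R_0 + P)v$, and this coefficient depends only on the $x$-part of the Lax connection, not on the choice of $Q$ (equivalently, not on which $\alpha_n$ is switched on). Hence the same conserved quantities govern all hierarchy flows. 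Without this observation your argument has no input bounding $\norm{u(t)}_{H^s}$ for $j \ge 2$, and the continuation step cannot proceed.
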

\begin{proof}
    In order to prove this theorem we rely on the scale of conserved quantities constructed in~\cite{Koch2018}. Specifically referring to Theorem 1.1 and Corollary 1.2 therein, there exist conserved quantities, for all $s > -\1{2}$, such that the norm of a solution remains bounded if the norm of the initial data was finite under the flow of NLS and complex mKdV.

    We must argue that the same holds for all flows in the NLS hierarchy. Combined with our local result in Theorem~\ref{thm:wp-hat} this will prove the Theorem. Referencing the construction of the conserved quantities in~\cite{Koch2018}*{eqns.~(2.12) and~(2.13)}, one notices that these solely rely on the so-called transmission coefficient. This quantity arises in the scattering problem we reference in~\eqref{eq:linear-scattering}, translating between two Jost solutions of $\partial_x v = (\zeta R_0 + P)v$, see~\cite{Alberty1982-i}*{eqns.~(2.1.6) and~(2.1.25)ff.} and~\cite{Koch2018}*{eqns.~(2.5)ff.}.

    Key insight is, that the transmission coefficient is always the same, independent of which equation in the hierarchy one is interested in. This is also reflected in the fact that our choice of $Q$ (see the paragraph after~\eqref{eq:recursion-flux}) does not influence the transmission coefficient. The importance of the transmission coefficient for at least polynomial conservation laws was also already recognized in~\cite{Alberty1982-i}*{eqn.~(2.1.29)}.
\end{proof}

\subsection{Discussion}

Before moving on we would like to discuss our positive and negative results laid out in the preceding subsection.


First let us mention that our Theorem~\ref{thm:illposed-hat} establishes that, within the realm of the technique we utilise, our well-posedness result in Theorem~\ref{thm:wp-hat} is optimal. In the sense that no direct application of the contraction mapping theorem will lead to well-posedness at a lower initial regularity $s \in \R$ than stated in Theorem~\ref{thm:wp-hat}, since this would lead to the flow being analytic.

This of course does not preclude the possibility of different arguments, more heavily relying on the integrability of the hierarchy, similar to~\cite{KochKlaus2023} for the KdV hierarchy, leading to well-posedness in $H^s(\R)$ for some $s < \frac{j-1}{2}$.

We extend previous results regarding the fourth order equation: in~\cite{HuoJia2005} it was shown that the fourth order NLS equation is locally well-posed in $H^s(\R)$, for $s \ge \1{2}$, under a non-resonance condition on the coefficients in the nonlinearity. This non-resonance condition could be removed by different authors in~\cite{Ikeda2021}. We are also able to remove this condition (using different underlying function spaces) and extend the well-posedness result to all higher-order Schrödinger equations. Also, the global result in~\cites{Feng1994, Feng1995} we extend all the way down to our local result using the a priori estimates from~\cite{Koch2018}.

Not included in our well-posedness result is the critical space on our scale of function spaces $\widehat{L^1}(\R)$. Though this comes at no surprise as this space contains some nasty initial data, including the Dirac delta $\delta_0$. For this, shown in~\cite{KPV2001}*{Theorem~1.5}, it is known that no suitable notion of solution may be defined in the case of NLS. We mention the ongoing effort of extending well-posedness results (under weakened continuity assumptions on the flow) to ever greater spaces comparable to the critical $H^{-\1{2}}$ or $\widehat{L^1}$. See~\cite{BanicaVega2024} for recent developments and an overview.

In connection with Theorem~\ref{thm:illposed-hat} we would also like to mention, that our arguments do not establish ill-posedness for the \emph{actual} NLS hierarchy equations. Rather looking at a set of related equations, the first of which we give in~\eqref{eq:illposedness-equation-example}.

\quad

Next we argue our interest in the other scale of function spaces that we deal with, modulation spaces. Recall that in~\cites{Guo2017, Pattakos2019} it was shown, that NLS is locally well-posed in $M^0_{2,p}(\R)$ for $2 \le p < \infty$. This exhausts the entire subcritical range suggested by the scaling heuristic (where the critical space is $H^{-\1{2}}$) and the embedding $M^s_{2, p} \supset \hat H^s_{p'}$ for $p \ge 2$. In the Fourier-Lebesgue space setting similar results were shown in~\cite{Grünrock2005}, establishing local well-posed of NLS in $\widehat{L^r}$ for $1 < r < \infty$.

For mKdV local well-posedness was also established in almost critical Fourier-Lebesgue spaces. Specifically in~\cite{GrünrockVega2009} it was shown that mKdV is locally well-posed in $\hat H^s_r(\R)$ for $s = \1{2} - \1{2r}$ and $2 \ge r > 1$. In the modulation space setting though a gap of a quarter\footnote{It is a quarter of a derivative keeping in mind we accept the embedding $M^0_{2, \infty} \supset \widehat{L^1}$ as our guidance for criticality in the modulation space setting.} derivative between the scaling heuristic and the optimal result appears. To be exact, in~\cites{OhWang2021, Chen2020}, it was shown that mKdV is locally well-posed in $M^{\1{4}}_{2,p}(\R)$ for $2 \le p \le \infty$ and that this is optimal in the sense that the flow fails to be uniformly continuous for $s < \1{4}$.

Our Theorem~\ref{thm:wp-modulation} parallels this development for the higher-order equations, i.e. for every step to the next equation in the NLS hierarchy another half-derivative regularity of the initial data is necessary for our positive result\footnote{Note that the half-derivative increase is for stepping from one NLS hierarchy equation to the next. Looking also at the mKdV hierarchy equations in modulation spaces would be an interesting feat. The author expects that well-posedness would be achieved in modulation spaces differing by a quarter derivative from the corresponding NLS hierarchy results.}.

\quad


Moving on to results for the torus $\T$, Theorem~\ref{thm:illposedness-torus-C3} establishes that no well-posedness results may be established using the contraction mapping principle directly in the data spaces we use, at least for the next higher-order equation. This is in stark contrast to NLS, where well-posedness in $L^2(\T)$ was established in~\cite{Bourgain1993-1}.

It is reasonable to believe that the further NLS hierarchy equations are ill-posed in a similar manner and do not allow direct treatment with the contraction mapping principle. Even worse though at low regularities: here Theorem~\ref{thm:illposedness-torus-C0} establishes a milder form of ill-posedness, but in this case for an NLS-like equation of arbitrarily high (dispersion) order.

In such cases, where the proper model fails to have a well-behaved local theory, it sometimes helps to look at a renormalized/gauge-transformed version of the equation. For example, with NLS below $L^2(\T)$, considering the so-called Wick ordered NLS equation
\begin{equation}\label{eq:wick-nls}
    i\partial_t u + \partial_x^2 u = \pm \left(|u|^2 - \1{\pi}\int_\T |u|^2 \d{x}\right)u
\end{equation}
has lead to some success. See~\cite{OhSulem2012} for a review. Transitioning to a renormalised equation (via a gauge-transformation) is also a common approach with the derivative NLS equation~\cites{Grünrock2005, GrünrockHerr2008, Takaoka1999, Ozawa1996, HO1994}.

For our NLS hierarchy equations such a renormalisation might also lead to positive well-posedness results. Though it is not clear if such an approach would yield well-posedness only for the NLS hierarchy equations or for a general class, like in our results on the line.

Another viable path to approaching well-posedness on the torus (but also on the line in $H^s(\R)$ for some $s < \frac{j-1}{2}$) is to rely on the integrability of the equation, as was done for the KdV hierarchy in~\cite{KochKlaus2023}. This has the disadvantage of definitely not working for similar, but non-integrable, variants of higher-order NLS-like equations.
\section{Linear and multilinear smoothing estimates}\label{sec:linear-est}

In the following section we will be collecting and proving smoothing estimates for free solutions of equation~\eqref{eq:general-cauchy}, i.e. with $F = 0$. To shorten notation, consider solutions $u(x, t) = e^{(-1)^j t \partial_x^{2j}} u_0(x)$, $v(x, t) = e^{(-1)^j t \partial_x^{2j}} v_0(x)$ and $w(x, t) = e^{(-1)^j t \partial_x^{2j}} w_0(x)$ with initial data $u_0$, $v_0$ and $w_0$ respectively when proving estimates involving free solutions. Likewise $u$, $v$ and $w$ will refer to functions in appropriate $X_{s,b}$ space variants when talking about estimates in these spaces.

\subsection{Linear estimates}

The following linear estimates are essentially known in the literature. Our proof of Proposition~\ref{prop:quintic-estimate-sobolev} relies heavily upon them.

\begin{proposition}
    Let $b > \1{2}$, then the following inequalities hold
    \begin{enumerate}
        \item for $2 \le q \le \infty$ and $\sigma > \1{2} - \frac{2j}{q}$
        \begin{equation}\label{eq:kato1}
        \norm{u}_{L^\infty_xL^q_t} \lesssim \norm{u}_{X_{\sigma, b}}
        \end{equation}
        \item for $2 \le p \le \infty$ and $\sigma = - \frac{2j - 1}{2}(1 - \frac{2}{p})$
        \begin{equation}\label{eq:kato2}
        \norm{u}_{L^p_xL^2_t} \lesssim \norm{u}_{X_{\sigma, b}}
        \end{equation}
        \item for $4 \le p \le \infty$ and $\sigma >  \1{2} - \1{p}$
        \begin{equation}\label{eq:max-fct}
        \norm{u}_{L^p_xL^\infty_t} \lesssim \norm{u}_{X_{\sigma, b}}
        \end{equation}
    \end{enumerate}
\end{proposition}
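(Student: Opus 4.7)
The plan is to reduce all three bounds to estimates on free solutions $u(x,t) = e^{(-1)^j t\partial_x^{2j}} u_0(x)$ with phase $\varphi(\xi) = (-1)^{j+1}\xi^{2j}$ via the transfer principle for $X_{s,b}$ spaces (see Grünrock's work cited after the $X_{s,b}$ definition): it suffices to establish
\begin{equation*}
    \|e^{(-1)^j t \partial_x^{2j}} u_0\|_Y \lesssim \|u_0\|_{H^\sigma}
\end{equation*}
for $Y$ each of the three mixed-norm spaces, and the corresponding $X_{\sigma,b}$ bound follows for $b>1/2$. Thus the game reduces to a Fourier-side computation involving the one-dimensional polynomial phase $\xi \mapsto \xi^{2j}$.

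The heart of the matter is the sharp local smoothing (Kato) estimate, namely \eqref{eq:kato2} at the endpoint $p=\infty$, i.e.\ $\|u\|_{L^\infty_x L^2_t} \lesssim \|u_0\|_{\dot H^{-(2j-1)/2}}$. I would prove this by splitting $\hat u_0$ into its parts supported on $\xi>0$ and $\xi<0$ and, on each piece, performing the change of variables $\eta = \xi^{2j}$, so that $\d\eta = 2j\,\xi^{2j-1}\,\d\xi$. Writing
\begin{equation*}
u(x,t) = \frac{1}{\sqrt{2\pi}} \int e^{it\eta} e^{ix\xi(\eta)} \frac{\hat u_0(\xi(\eta))}{2j\,|\xi(\eta)|^{2j-1}} \d\eta,
\end{equation*}
Plancherel in $t$ (uniform in $x$) gives $\|u(x,\cdot)\|_{L^2_t}^2 \lesssim \int |\hat u_0(\xi)|^2 |\xi|^{-(2j-1)} \d\xi$, which is the claim. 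The trivial $L^2_x L^2_t$ bound $\|u\|_{L^2_{xt}} \lesssim \|u_0\|_{L^2}$ (after localising in time, or via $X_{0,b}\hookrightarrow L^2_{xt}$) handles $p=2$, and complex interpolation between these two endpoints yields \eqref{eq:kato2} for all $2\le p\le\infty$ with the stated $\sigma$.

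For \eqref{eq:kato1} I would interpolate the endpoint $L^\infty_x L^2_t$ estimate just established (at $\sigma=-(2j-1)/2$, $q=2$) with the trivial Sobolev embedding bound $\|u\|_{L^\infty_{x,t}} \lesssim \|u\|_{X_{\sigma',b}}$ valid for any $\sigma'>1/2$ and $b>1/2$ (which comes from $X_{\sigma',b}\hookrightarrow C_t H^{\sigma'}_x \hookrightarrow L^\infty_{x,t}$ by one-dimensional Sobolev embedding). Choosing the interpolation parameter $\theta = 2/q$ recovers the target space $L^\infty_x L^q_t$ and produces regularity $\sigma > 1/2 - 2j/q$ exactly as stated, with the $\eps$-loss from the embedding surviving as the strict inequality.

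The genuinely new ingredient is the maximal function estimate \eqref{eq:max-fct}; this is where I expect the main technical difficulty. I would proceed via dyadic Littlewood-Paley decomposition $u = \sum_N u_N$ with $|\xi|\sim N$, and for each piece prove
\begin{equation*}
\|u_N\|_{L^p_x L^\infty_t} \lesssim N^{1/2 - 1/p + \eps}\|P_N u_0\|_{L^2}.
\end{equation*}
The standard route is to write the kernel of $e^{it\partial_x^{2j}} P_N$, apply stationary phase to obtain pointwise oscillatory decay, then use the Kenig--Ruiz/Sj\"olin maximal-function argument: combine the pointwise kernel bounds with a Christ--Kiselev-type or Hardy--Littlewood maximal argument to pass from the fixed-time $L^p$ bound to the space-time $L^p_x L^\infty_t$ bound. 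Alternatively, since the paper explicitly says these linear estimates are known and parallels the work in \cite{AGTowers} on the mKdV hierarchy (which features the same polynomial-phase evolution), I would cite the corresponding results there, where the analogous maximal function estimate for the dispersion $\xi^{2j+1}$ has already been carried out; the transcription to even exponent $\xi^{2j}$ is mechanical. Summing the dyadic pieces using $\ell^2$-orthogonality in $N$ (absorbing the $\eps$-loss) then finishes \eqref{eq:max-fct}.
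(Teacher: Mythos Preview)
Your proposal is correct and follows essentially the same route as the paper: prove the Kato smoothing endpoint $\|u\|_{L^\infty_x L^2_t} \lesssim \|u_0\|_{\dot H^{-(2j-1)/2}}$, combine with the trivial bounds $\|u\|_{L^\infty_{xt}} \lesssim \|u\|_{X_{1/2+,b}}$ and $\|u\|_{L^2_{xt}} \lesssim \|u\|_{X_{0,0}}$ via interpolation, and invoke the transfer principle. The only cosmetic differences are that the paper cites \cite{KPV1991}*{Theorem~4.1} for the Kato estimate rather than reproving it by the $\eta=\xi^{2j}$ substitution, explicitly notes the low-frequency fix (Sobolev embedding in $x$) needed to pass from the homogeneous $\dot H^{-(2j-1)/2}$ to the inhomogeneous $X_{\sigma,b}$ norm, and for the maximal function cites the $L^4_x L^\infty_t$ endpoint from \cite{KPV1991}*{Theorem~2.5} and \cite{Sjolin1987} before interpolating with $L^\infty_{xt}$, rather than working dyadically or citing \cite{AGTowers}.
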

\begin{proof}
    These linear estimates are interpolated variants of a Kato-type local smoothing estimate (for~\eqref{eq:kato1} and~\eqref{eq:kato2}) and a maximal function estimate (for~\eqref{eq:max-fct}).

    From~\cite{KPV1991}*{Theorem~4.1} we know for large frequencies
    \begin{equation}\label{eq:high-freq-kato}
        \norm{(\mathrm{id} - P_1)u}_{L^\infty_x L^2_t} \lesssim \norm{(\mathrm{id} - P_1)u_0}_{H^\sigma}, \quad\text{for $\sigma = -\frac{2j-1}{2}$.}
    \end{equation}
    For small frequencies we may use a Sobolev-embedding in the space variable, where we may ignore the loss of derivatives. So we also know~\eqref{eq:high-freq-kato} without the projector $(\mathrm{id} - P_1)$. Using the transfer principle on this bound and interpolating with the trivial bounds
    \begin{equation}\label{eq:trivial-bounds}
        \norm{u}_{L^\infty_{xt}} \lesssim \norm{u}_{X_{\1{2}+,b}} \quad\text{and}\quad \norm{u}_{L^2_{xt}} \lesssim \norm{u}_{X_{0,0}} \quad\text{for $b > \1{2}$}
    \end{equation}
    results in estimates~\eqref{eq:kato1} and~\eqref{eq:kato2} above respectively.

    For the maximal function estimate we cite~\cite{KPV1991}*{Theorem~2.5}, where
    \begin{equation*}
        \norm{(\mathrm{id} - P_1)u}_{L^4_x L^\infty_t} \lesssim \norm{(\mathrm{id} - P_1)u_0}_{H^\1{4}}
    \end{equation*}
    is established, again only for high frequencies. The same estimate was also independently found in~\cite{Sjolin1987}. Taking care of low frequencies as above and interpolating with the first bound in~\eqref{eq:trivial-bounds} results in estimate~\eqref{eq:max-fct} above.
\end{proof}

\subsection{Bilinear estimates}
Before we can go about proving any bilinear estimates, we will first define the bilinear operators which we will use. We will need two bilinear operators, the estimates for which will also differ if complex conjugation is applied to one of the factors, since our phase function is even. This is in contrast to the mKdV hierarchy in~\cite{AGTowers}, where the phase function is odd, and thus $X_{s, b}$ norms are invariant under complex conjugation.

So for $j \in\N$ and $1 \le p \le \infty$ define the pair of bilinear operators $I_{p, j}^{\pm}$ by their Fourier transform:
\begin{equation*}
\F_x I_{p,j}^\pm(f,g)(\xi) = c \int_* k_j^\pm(\xi_1, \xi_2)^\1{p} \hat{f}(\xi_1)\hat{g}(\xi_2) \d{\xi_1}.
\end{equation*}
Their symbol is given by
\begin{equation*}
k_j^\pm(\xi_1,\xi_2) = |\xi_1 \pm \xi_2|(|\xi_1|^{2j-2} + |\xi_2|^{2j-2}).
\end{equation*}

Comparing with the linear estimates above, this bilinear operator is a refinement in the sense that we now have access to the symbol of the not-quite-derivative $|\xi_1 \pm \xi_2|$. The following proposition establishes a corresponding estimate:

\begin{proposition}
    Let $j \ge 1$ and $1 \le q \le r_{1}, r_{2} \le p \le \infty$ with $\1{p} + \1{q} = \1{r_1} + \1{r_2}$. Then one finds
    \begin{equation*}
    \norm{\F_x I_{p,j}^\pm(u,v_\pm)(\xi,\cdot)}_{\widehat{L^p_t}} \lesssim (|\hat{u}_0|^{p^\prime} * |\hat{v}_0|^{p^\prime}(\xi))^\1{p^\prime}
    \end{equation*}
    and
    \begin{equation*}
    \norm{I_{p,j}^\pm(u,v_\pm)}_{\widehat{L^q_x}\widehat{L^p_t}} \lesssim \norm{u_0}_{\widehat{L^{r_1}_x}}\norm{v_0}_{\widehat{L^{r_2}_x}},
    \end{equation*}
    where $v_+ = \overline{v}$ and $v_- = v$.
\end{proposition}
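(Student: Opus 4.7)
The plan is to compute the full space-time Fourier transform of $I_{p,j}^\pm(u,v_\pm)$ directly, using that $u$ and $v$ are free solutions. Since $\F_x u(\xi,t) = e^{-it\xi^{2j}}\hat u_0(\xi)$ and $\F_x v_\pm(\xi,t) = e^{\pm it\xi^{2j}}\hat v_0^\pm(\xi)$ (where $\hat v_0^\pm$ differs from $\hat v_0$ only by a reflection and possibly a complex conjugation, which will be absorbed into the modulus), taking the Fourier transform in $t$ of the definition of $I_{p,j}^\pm$ gives
\[
    \F_{xt} I_{p,j}^\pm(u,v_\pm)(\xi,\tau) = c \int_* k_j^\pm(\xi_1,\xi_2)^{1/p}\, \hat u_0(\xi_1)\, \hat v_0^\pm(\xi_2)\, \delta\!\big(\tau - \Phi^\pm(\xi_1,\xi_2)\big) d\xi_1,
\]
with resonance $\Phi^\pm(\xi_1,\xi_2) = \xi_1^{2j} \mp \xi_2^{2j}$ and $\xi_2 = \xi - \xi_1$. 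The $\delta$ will be resolved by the coarea formula, which is legitimate because $\xi_1 \mapsto \Phi^\pm(\xi_1,\xi-\xi_1)$ is a polynomial of degree at most $2j$, hence has only finitely many preimages for each $\tau$.

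The technical core is the Jacobian estimate $|\partial_{\xi_1}\Phi^\pm(\xi_1,\xi-\xi_1)| \gtrsim k_j^\pm(\xi_1,\xi_2)$, and this is where I expect to spend some care. Computing, $\partial_{\xi_1}\Phi^\pm(\xi_1,\xi-\xi_1) = 2j\big(\xi_1^{2j-1} \pm \xi_2^{2j-1}\big)$. Factoring $a^{2j-1} - b^{2j-1} = (a-b)\sum_{k=0}^{2j-2} a^k b^{2j-2-k}$ handles the $I_{p,j}^-$ case: when $a$ and $b$ have the same sign, every summand is non-negative and the sum manifestly dominates $|a|^{2j-2}+|b|^{2j-2}$; when the signs differ, one instead uses $|a^{2j-1}-b^{2j-1}|/|a-b| = (|a|^{2j-1}+|b|^{2j-1})/(|a|+|b|) \gtrsim \max(|a|,|b|)^{2j-2}$. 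The case $a^{2j-1}+b^{2j-1}$ relevant to $I_{p,j}^+$ reduces to the previous one via $b \mapsto -b$, which turns $|a-b|$ into $|a+b|$, exactly matching $k_j^+$.

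With the Jacobian bound in hand, changing variables $\tau \leftrightarrow \xi_1$ in the $L^{p'}_\tau$ norm of the displayed identity produces a weight $|\partial_{\xi_1}\Phi^\pm|^{-(p'-1)}$, which combined with the symbol $k_j^\pm(\xi_1,\xi_2)^{p'/p}$ cancels exactly because $p'/p = p'-1$. What survives is
\[
    \|\F_{xt} I_{p,j}^\pm(u,v_\pm)(\xi,\cdot)\|_{L^{p'}_\tau}^{p'} \lesssim \big(|\hat u_0|^{p'} * |\hat v_0|^{p'}\big)(\xi),
\]
proving the first estimate (the reflection hidden inside $\hat v_0^\pm$ disappears after a trivial substitution $\xi_2\mapsto -\xi_2$). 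The second estimate then follows by taking the $L^{q'}_\xi$ norm of the first, raising to the $p'$-th power, and applying Young's convolution inequality with exponents $b = r_1'/p'$, $c = r_2'/p'$ and target $a = q'/p'$: the hypothesis $1/p+1/q = 1/r_1 + 1/r_2$ rewrites precisely as $1 + 1/a = 1/b + 1/c$, while $q \le r_1,r_2 \le p$ guarantees $a,b,c \in [1,\infty]$, so Young applies and delivers $\|u_0\|_{\widehat{L^{r_1}}}\|v_0\|_{\widehat{L^{r_2}}}$.
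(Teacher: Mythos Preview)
Your proof is correct and follows essentially the same route as the paper: compute the space-time Fourier transform, resolve the $\delta$ via the coarea formula, use the Jacobian lower bound $|\partial_{\xi_1}\Phi^\pm|\gtrsim k_j^\pm$ to cancel the symbol after the change of variables $\tau\leftrightarrow\xi_1$, and finish with Young's convolution inequality. The only cosmetic differences are that the paper first substitutes $x=\xi_1-\xi/2$ (which makes the phase a manifestly monotone odd polynomial in $x$, so a \emph{single} root rather than just finitely many) and obtains the Jacobian bound via the binomial expansion of $(\xi/2+x)^{2j}-(\xi/2-x)^{2j}$ instead of your direct factorisation of $a^{2j-1}\pm b^{2j-1}$; both arguments are equivalent and yield the same estimate.
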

\begin{proof}
    We will only write down the details for the $+$-case, the proof of the $-$-case is similar and we omit the details. Let us begin by calculating the Fourier transform only in the space-variable:
    \begin{equation*}
    \F_x I_{p,j}^+(u,\overline{v})(\xi,t) = \int_* k_j^+(\xi_1,\xi_2)^\1{p} e^{it(\xi_1^{2j} - \xi_2^{2j})} \hat{u}_0(\xi_1) \hat{\overline{v}}_0(\xi_2) \d\xi_1.
    \end{equation*}
    And now for the complete Fourier transform, substituting $x = \xi_1 - \frac{\xi}{2}$
    \begin{align}
    \F &I_{p,j}^+(u,\overline{v})(\xi,t) = \int_* k_j^+(\xi_1,\xi_2)^\1{p} \delta(\tau - \xi_1^{2j} + \xi_2^{2j}) \hat{u}_0(\xi_1) \hat{\overline{v}}_0(\xi_2) \d\xi_1\\
    &= \int_* k_j^+\left(\frac{\xi}{2} + x,\frac{\xi}{2} - x\right)^\1{p} \delta(\tau - g(x)) \hat{u}_0\left(\frac{\xi}{2} + x\right) \hat{\overline{v}}_0\left(\frac{\xi}{2} - x\right) \d{x}\\
    &= \int_* \left(\sum_{n} \frac{\delta(x - x_n)}{|g^\prime(x_n)|}\right) k_j^+\left(\frac{\xi}{2} + x,\frac{\xi}{2} - x\right)^\1{p} \hat{u}_0\left(\frac{\xi}{2} + x\right) \hat{\overline{v}}_0\left(\frac{\xi}{2} - x\right)\d{x},
    \label{eq:ft-bilin}
    \end{align}
    where the sum $\sum_{n}$ is over the simple solutions of the equation $\tau - g(x) = 0$ involving the function
    \begin{align*}
    g(x) &= \left(\frac{\xi}{2} + x\right)^{2j} - \left(\frac{\xi}{2} - x\right)^{2j} = \sum_{k=0}^{2j} \binom{2j}{k} \left(\frac{\xi}{2}\right)^{2j-k} (x^k - (-x)^k)\\
    &= 2\sum_{l=1}^j \binom{2j}{2l-1} \left(\frac{\xi}{2}\right)^{2(j-l) + 1} x^{2l-1} = \xi x \sum_{l=1}^j \binom{2j}{2l-1} \left(\frac{\xi}{2}\right)^{2(j-l)} x^{2(l-1)}.
    \end{align*}
    By our choice in the definition of the symbol of our bilinear operator we have the following lower bound on the absolute value of the derivative of $g(x)$
    \begin{equation*}
    |\partial_xg(x)| \sim |\xi| \sum_{l=1}^j \binom{2j}{2l-1} \left(\frac{\xi}{2}\right)^{2(j-l)} x^{2(l-1)} \gtrsim k^+_j\left(\frac{\xi}{2} + x,\frac{\xi}{2} - x\right).
    \end{equation*}
    Now $g(x)$, as a sum of monotone functions, only admits a single (real) solution of $\tau -g(x) = 0$. Calling this solution $y \in \R$ we can bound, except on a $\xi$ set of measure zero
    \begin{equation*}
    \eqref{eq:ft-bilin} \lesssim k_j^+\left(\frac{\xi}{2} + y,\frac{\xi}{2} - y\right)^{-\1{p^\prime}} \hat{u}_0\left(\frac{\xi}{2} + y\right) \hat{\overline{v}}_0\left(\frac{\xi}{2} - y\right).
    \end{equation*}
    In order to now calculate the $L^{p^\prime}_\tau$-norm of this expression we substitute the measure $\d{\tau} = |g^\prime(y)|\d{y}$, since we have $\tau = g(y)$, which causes the symbol of the operator to disappear and we arrive at
    \begin{equation*}
    \norm{\F_x I_{p,j}^+(u,\overline{v})(\xi,\cdot)}_{\widehat{L^p_t}} = \int_\R \left|\hat{u}_0\left(\frac{\xi}{2} + y\right) \hat{v}_0\left(\frac{\xi}{2} - y\right)\right|^{p^\prime} \d{y}
    = |\hat{u}_0|^{p^\prime}*|\hat{v}_0|^{p^\prime}(\xi).
    \end{equation*}
    This proves our first claim. In order to now extend this to an $\widehat{L^q_x}\widehat{L^p_t}$ result we make use of Young's convolution inequality. For this choose $\rho^\prime = \frac{q^\prime}{p^\prime}$, $\rho_{k} = \frac{r_{k}^\prime}{p^\prime}$ for $k\in\set{1,2}$, so that $\1{\rho} = \1{\rho_1} + \1{\rho_2}$. Then
    \begin{align*}
    \norm{I_{p,j}^+(u,\overline{v})}_{\widehat{L^q_x}\widehat{L^p_t}} &\lesssim \paren{\int \left||\hat{u}_0|^{p^\prime}*|\hat{v}_0|^{p^\prime}(\xi)\right|^\frac{q^\prime}{p^\prime} \d\xi}^\1{q^\prime} = \norm{|\hat{u}_0|^{p^\prime}*|\hat{v}_0|^{p^\prime}}_{L^{\rho^\prime}_\xi}\\
    &\lesssim \left[ \norm{|\hat{u}_0|^{p^\prime}}_{L^{\rho_1^\prime}_\xi} \norm{|\hat{v}_0|^{p^\prime}}_{L^{\rho_2^\prime}_\xi} \right]^\1{p^\prime} = \norm{u_0}_{\widehat{L^{r_1}_x}} \norm{v_0}_{\widehat{L^{r_2}_x}}
    \end{align*}
    as claimed and the proof is complete.
\end{proof}
Using the transfer principle for $\hat X_{s,b}^r$ spaces mentioned in Section~\ref{sec:notation} we may now conclude:
\begin{corollary}
    Let $1 \le q \le r_{1}, r_{2} \le p < \infty$ and $b_i > \1{r_i}$. Then we have
    \begin{equation}\label{eq:bilin-est}
    \norm{I_{p,j}^\pm(u,v_\pm)}_{\widehat{L^q_x}\widehat{L^p_t}} \lesssim \norm{u}_{\hat X_{0,b_1}^{r_1}} \norm{v}_{\hat X_{0,b_2}^{r_2}},
    \end{equation}
    where $v_+ = \overline{v}$ and $v_- = v$.
\end{corollary}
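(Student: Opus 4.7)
The proof will proceed by a standard application of the transfer principle, adapted here to a bilinear setting and to the Fourier--Lebesgue based Bourgain spaces. Set $\varphi(\xi) = (-1)^{j+1}\xi^{2j}$, the phase function for the linear problem, and write $U(t) = e^{(-1)^j t\partial_x^{2j}}$, so that free solutions are of the form $U(t) u_0$. The plan is to decompose $u$ and $v$ as continuous superpositions of free solutions, apply the bilinear bound of the preceding proposition to each piece, and reassemble using Minkowski's inequality and Hölder's inequality in the modulation variable.

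Concretely, I would start from the Fourier inversion formula and the change of variables $\tau = \sigma + \varphi(\xi)$ to write
\begin{equation*}
u(x, t) = c \int_\R e^{it\sigma_1} [U(t) f_{\sigma_1}](x) \d{\sigma_1}, \qquad \hat f_{\sigma_1}(\xi) = \hat u(\xi, \sigma_1 + \varphi(\xi)),
\end{equation*}
and analogously $v(x,t) = c\int e^{it\sigma_2} [U(t) g_{\sigma_2}](x) \d{\sigma_2}$. Since $\varphi$ is even, complex conjugation turns $U(t)g_{\sigma_2}$ into $U(-t)\tilde g_{\sigma_2}$ with $\hat{\tilde g}_{\sigma_2}(\xi) = \overline{\hat g_{\sigma_2}(-\xi)}$, so the $v_+ = \overline v$ case produces the same kind of superposition (with a sign change in the modulation phase and a harmless reflection on the spatial Fourier side, both of which leave the $\widehat{L^{r_2}_x}$ norm of $\tilde g_{\sigma_2}$ invariant). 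Substituting into $I_{p,j}^\pm$ and using its bilinearity gives
\begin{equation*}
    I_{p,j}^\pm(u, v_\pm)(x, t) = c \int\!\!\int e^{it(\sigma_1 \pm \sigma_2)}\, I_{p,j}^\pm\bigl([U(t) f_{\sigma_1}], [U(\pm t) \tilde g_{\sigma_2}]_\pm\bigr)(x) \d{\sigma_1} \d{\sigma_2}.
\end{equation*}

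Next, I would invoke Minkowski's integral inequality to pull the $\widehat{L^q_x}\widehat{L^p_t}$ norm inside the $\sigma_i$ integrals; the oscillating factor $e^{it(\sigma_1\pm\sigma_2)}$ only translates the $\tau$-Fourier variable and hence does not alter the $\widehat{L^p_t}$ norm. Applying the free-solution bilinear estimate from the preceding proposition leaves the bound
\begin{equation*}
    \norm{I_{p,j}^\pm(u, v_\pm)}_{\widehat{L^q_x}\widehat{L^p_t}} \lesssim \int\!\!\int \norm{f_{\sigma_1}}_{\widehat{L^{r_1}_x}} \norm{\tilde g_{\sigma_2}}_{\widehat{L^{r_2}_x}} \d{\sigma_1}\d{\sigma_2}.
\end{equation*}
To conclude, I insert weights and apply Hölder's inequality in each $\sigma_i$ with exponents $(r_i, r_i')$:
\begin{equation*}
    \int \norm{f_{\sigma_1}}_{\widehat{L^{r_1}_x}} \d{\sigma_1} \le \left( \int \JBX[\sigma_1]^{-b_1 r_1} \d{\sigma_1} \right)^{\!1/r_1} \left( \int \JBX[\sigma_1]^{b_1 r_1'} \norm{f_{\sigma_1}}_{\widehat{L^{r_1}_x}}^{r_1'} \d{\sigma_1} \right)^{\!1/r_1'}.
\end{equation*}
The first factor is finite precisely when $b_1 > 1/r_1$, which is where the hypothesis enters. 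Undoing the substitution $\sigma_1 = \tau_1 - \varphi(\xi_1)$ identifies the second factor with $\norm{u}_{\hat X^{r_1}_{0, b_1}}$, and the corresponding treatment of the $\sigma_2$ integral yields $\norm{v}_{\hat X^{r_2}_{0, b_2}}$.

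The only delicate point in the argument is the bookkeeping around the conjugation $v_+ = \overline v$: one must verify that the superposition representation survives conjugation in a form that lets the proposition apply with the same exponent $r_2$. Because $\varphi$ is even, this reduces to a reflection on the Fourier side, which preserves both the $\widehat{L^{r_2}_x}$ norm and the $\hat X^{r_2}_{0, b_2}$ norm, so no additional cost is incurred. Every other step is routine Fubini/Minkowski/Hölder manipulation.
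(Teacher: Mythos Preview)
Your argument is correct and is precisely the transfer principle that the paper invokes (without details) to pass from the free-solution bilinear estimate to the $\hat X^{r}_{0,b}$ version; the paper simply cites this as a known mechanism from \cite{Grünrock2004}. One minor simplification: since the preceding proposition is already stated for $I^+_{p,j}(u,\overline v)$ with $u,v$ free solutions, you need not rewrite $\overline{U(t)g_{\sigma_2}}$ as $U(-t)\tilde g_{\sigma_2}$---you can apply the proposition directly to the pair $(U(t)f_{\sigma_1},U(t)g_{\sigma_2})$ inside the superposition and avoid the reflection bookkeeping entirely.
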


Throughout dealing with the cubic terms we will also make heavy use of inequalities that can be interpreted as the duals of those in~\eqref{eq:bilin-est}. For this view the bilinear operators as maps
\begin{equation*}
u \mapsto I^\pm_{p,j}(u, v_\pm), \qquad \hat X_{0,b_1}^{r_1} \to \widehat{L^q_x}\widehat{L^p_t}
\end{equation*}
i.e. as a multiplication with $v_\pm$ with operator norm $\lesssim \norm{v}_{X_{0,b_2}^{r_2}}$. By duality we also have the continuity, except in the endpoint case, of the map
\begin{equation*}
w \mapsto I^{\pm,*}_{p,j}(w, v_{\mp}), \qquad \widehat{L^{q^\prime}_x}\widehat{L^{p^\prime}_t} \to \hat X_{0,-b_1}^{r_1^\prime}
\end{equation*}
with the same upper bound for the operator norm. Note how we now multiply with $v_\mp$ instead of $v_\pm$. A straightforward calculation gives the associated symbols of the operators $I^{\pm,*}_{p,j}$ as
\begin{equation*}
k_j^{+,*}(\xi_1, \xi_2) = |\xi_1|(|\xi_1|^{2j-2} + |\xi_2|^{2j-2}), \quad k_j^{-,*}(\xi_1, \xi_2) = |\xi_1 + 2\xi_2|(|\xi_1|^{2j-2} + |\xi_2|^{2j-2}).
\end{equation*}
We collect the new estimates in the following
\begin{corollary}
    Let $1 < q \le r_{1},r_{2} \le p < \infty$ with $\1{p} + \1{q} = \1{r_1} + \1{r_2}$ and $b_i > \1{r_i}$. Then the estimate
    \begin{equation}\label{eq:dual-bilin-free-params}
    \norm{I^{\pm,*}_{p,j}(u,v_\mp)}_{\hat X_{0,-b_1}^{r_1^\prime}} \lesssim \norm{u}_{\widehat{L^{q^\prime}_x}\widehat{L^{p^\prime}_t}} \norm{v}_{\hat X_{0,b_2}^{r_2}}
    \end{equation}
    holds. If alternatively $0 \le \1{\rho^\prime} \le \1{r^\prime}$ and $\beta < -\1{\rho^\prime}$ we have
    \begin{equation}\label{eq:bilin-dual}
    \norm{I^{\pm,*}_{\rho^\prime,j}(u,v_\mp)}_{\hat X_{0,\beta}^r} \lesssim \norm{u}_{\widehat{L^r_{xt}}} \norm{v}_{\hat X_{0,-\beta}^{\rho^\prime}}.
    \end{equation}
    In both cases $v_+ = \overline{v}$ and $v_- = v$.
\end{corollary}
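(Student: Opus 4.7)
The plan is to derive both estimates by passing to the Banach-space adjoint of~\eqref{eq:bilin-est}, as is already outlined in the paragraph preceding the corollary.

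For~\eqref{eq:dual-bilin-free-params}, I would read~\eqref{eq:bilin-est} as the statement that $u \mapsto I^\pm_{p,j}(u, v_\pm)$ is a bounded linear map $\hat X^{r_1}_{0,b_1} \to \widehat{L^q_x}\widehat{L^p_t}$ with operator norm $\lesssim \norm{v}_{\hat X^{r_2}_{0,b_2}}$. Since all integrability exponents lie strictly in $(1,\infty)$, the Banach adjoint is a bounded map between the dual spaces with the same operator-norm bound; the identifications $\bigl(\widehat{L^q_x}\widehat{L^p_t}\bigr)' \cong \widehat{L^{q'}_x}\widehat{L^{p'}_t}$ and $\bigl(\hat X^{r_1}_{0,b_1}\bigr)' \cong \hat X^{r_1'}_{0,-b_1}$ then yield exactly~\eqref{eq:dual-bilin-free-params}, provided one identifies this adjoint with $u \mapsto I^{\pm,*}_{p,j}(u, v_\mp)$. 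That identification is obtained by expanding the pairing $\langle I^\pm_{p,j}(u, v_\pm), w \rangle_{L^2}$ via Parseval on the Fourier side and performing the change of variables that swaps the convolution output with the first integration frequency. The flip $v_\pm \mapsto v_\mp$ arises because the $L^2$-adjoint of multiplication by $\overline{v}$ is multiplication by $v$; the advertised symbols $k_j^{\pm,*}$ emerge by evaluating $k_j^\pm$ at the substituted frequencies, using that $|\xi_1 + \xi_2|^{2j-2} + |\xi_2|^{2j-2}$ is pointwise comparable to $|\xi_1|^{2j-2} + |\xi_2|^{2j-2}$.

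For~\eqref{eq:bilin-dual}, I would dualise once more: write $\norm{F}_{\hat X^r_{0,\beta}} = \sup |\langle F, w \rangle_{L^2}|$ over $\norm{w}_{\hat X^{r'}_{0,-\beta}} \le 1$, apply this with $F = I^{\pm,*}_{\rho',j}(u, v_\mp)$, and move the adjoint back across the pairing. Estimating the resulting trilinear expression by H\"older in the Fourier variables, the hypothesis $\beta < -\1{\rho'}$ is exactly what ensures $\JBX[\tau - \varphi(\xi)]^\beta \in L^{\rho'}_\tau$ uniformly in $\xi$, so the modulation weight is absorbed at the cost of an $L^{\rho'}$-integration in $\tau$. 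What remains is an $\widehat{L^{\rho'}_{xt}}$-bound on $I^\pm_{\rho',j}(w, v_\pm)$, supplied by the diagonal case of~\eqref{eq:bilin-est} with $p = q = r_1 = r_2 = \rho'$, together with the $\widehat{L^r_{xt}}$-norm on $u$ coming out of the Parseval pairing.

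The main obstacle is the bookkeeping for the adjoint-symbol identification: several frequency substitutions and complex-conjugation signs must be tracked simultaneously. Otherwise the argument uses only the duality of $L^p$-type spaces in the non-endpoint regime and H\"older's inequality.
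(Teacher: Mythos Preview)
Your treatment of~\eqref{eq:dual-bilin-free-params} is fine and matches the paper: it is exactly the Banach adjoint of~\eqref{eq:bilin-est}, and the symbol identification you sketch is the right one.

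Your argument for~\eqref{eq:bilin-dual}, however, does not close. After dualising and moving the adjoint you are reduced to
\[
\norm{I^{\pm}_{\rho',j}(w,v_\pm)}_{\widehat{L^{r'}_{xt}}} \lesssim \norm{w}_{\hat X^{r'}_{0,-\beta}}\,\norm{v}_{\hat X^{\rho'}_{0,-\beta}}.
\]
You then invoke the diagonal case $p=q=r_1=r_2=\rho'$ of~\eqref{eq:bilin-est}, which controls $\norm{I^{\pm}_{\rho',j}(w,v_\pm)}_{\widehat{L^{\rho'}_{xt}}}$ in terms of $\norm{w}_{\hat X^{\rho'}_{0,b}}$ and $\norm{v}_{\hat X^{\rho'}_{0,b}}$ for $b>\1{\rho'}$. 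The $v$-factor is fine since $-\beta>\1{\rho'}$, but two spatial exponents are wrong: the output lands in $\widehat{L^{\rho'}_{xt}}$ rather than the needed $\widehat{L^{r'}_{xt}}$, and you only control $w$ in $\hat X^{r'}_{0,-\beta}$, not $\hat X^{\rho'}_{0,b}$. The absorption $\JBX[\sigma]^\beta \in L^{\rho'}_\tau$ you cite acts only in the time variable and cannot repair either of these mismatches in $\xi$; there is no embedding $\hat X^{r'}_{0,-\beta}\hookrightarrow \hat X^{\rho'}_{0,b}$ (nor $\widehat{L^{\rho'}_{xt}}\hookrightarrow\widehat{L^{r'}_{xt}}$) when $\rho'>r'$.

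The paper's route avoids this by not fixing $\rho'$ in a single application of~\eqref{eq:bilin-est}. Instead it records the two endpoints
\[
\norm{I^{\pm,*}_{\infty,j}(u,v_\mp)}_{\hat X^r_{0,0}} \lesssim \norm{u}_{\widehat{L^r_{xt}}}\norm{v}_{\hat X^\infty_{0,0}}
\quad\text{(Young)}\quad\text{and}\quad
\norm{I^{\pm,*}_{r',j}(u,v_\mp)}_{\hat X^r_{0,-b}} \lesssim \norm{u}_{\widehat{L^r_{xt}}}\norm{v}_{\hat X^{r'}_{0,b}},
\]
the second being the diagonal case of~\eqref{eq:dual-bilin-free-params}, and then applies Stein's analytic interpolation in the family $I^{\pm,*}_{\rho',j}$. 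The point is that the operator itself varies with $\rho'$ through the symbol power $k_j^{\pm,*}(\cdot)^{1/\rho'}$, so this is genuinely a Stein-type situation; a fixed-operator duality/H\"older argument cannot produce the intermediate bounds.
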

\begin{proof}
    The first estimate follows from above arguments, for the second inequality we first mention the endpoint of Young's convolution inequality
    \begin{equation*}
    \norm{uv_\mp}_{\widehat{L^r_{xt}}} \lesssim \norm{u}_{\widehat{L^r_{xt}}} \norm{v}_{\widehat{L^\infty_{xt}}}.
    \end{equation*}
    which we will use in the form
    \begin{equation}\label{eq:endpoint-stein1}
    \norm{I^{\pm,*}_{\infty,j}(u, v_\mp)}_{\hat X_{0,0}^r} \lesssim \norm{u}_{\widehat{L^r_{xt}}} \norm{v}_{\hat X_{0,0}^\infty}.
    \end{equation}
    Setting $q = r_1 = r_2 = p = r'$ in~\eqref{eq:dual-bilin-free-params} results in
    \begin{equation}\label{eq:endpoint-stein2}
    \norm{I^{\pm,*}_{r',j}(u, v_\mp)}_{\hat X_{0,-b}^r} \lesssim \norm{u}_{\widehat{L^r_{xt}}} \norm{v}_{\hat X_{0,b}^{r'}}\quad\text{for $b > \1{r'}$.}
    \end{equation}
    Now applying Stein's interpolation theorem between~\eqref{eq:endpoint-stein1} and~\eqref{eq:endpoint-stein2} results in the desired bound~\eqref{eq:bilin-dual}.
\end{proof}

\subsection{Fefferman-Stein estimate}
For later interpolation arguments we need a generalization of the Fefferman-Stein~\cite{Fefferman1970} inequality for higher-order phase functions.
\begin{proposition}
    Let $4 < q < \infty$ and $\1{r} = \1{2} + \1{q}$. For $\sigma = \frac{j-1}{2}$ one has
    \begin{equation*}
    \norm{I^\sigma u}_{L^4_t L^q_x} \lesssim \norm{u_0}_{\widehat{L^r_x}}.
    \end{equation*}
\end{proposition}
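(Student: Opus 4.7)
The stated inequality is a Fefferman--Stein Fourier extension estimate for the curve $\Gamma = \{(\xi, \xi^{2j}) : \xi \in \R\}$; the plan is to prove it by the $TT^*$ method combined with stationary phase. Setting
\[
    Tf(x,t) = \int_\R |\xi|^\sigma e^{i(x\xi + t\xi^{2j})} f(\xi)\, d\xi,
\]
duality reduces the assertion to the boundedness $TT^* : L^{4/3}_t L^{q'}_x \to L^4_t L^q_x$. Since $TT^*$ acts by convolution in $(x,t)$ with the oscillatory kernel
\[
    K(x,t) = \int_\R |\xi|^{j-1} e^{i(x\xi + t\xi^{2j})}\, d\xi,
\]
the problem reduces to a kernel estimate.

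The phase $\Phi(\xi) = x\xi + t\xi^{2j}$ has its critical point at $\xi_0 \sim (-x/(2jt))^{1/(2j-1)}$, with $|\Phi''(\xi_0)| \sim |t|\,|\xi_0|^{2j-2}$. The method of stationary phase yields the pointwise bound
\[
    |K(x,t)| \lesssim |\Phi''(\xi_0)|^{-1/2} |\xi_0|^{2\sigma} = |t|^{-1/2} |\xi_0|^{2\sigma - (j-1)}.
\]
The choice $\sigma = (j-1)/2$ is precisely the value that cancels the $\xi_0$-dependence coming from the Hessian, producing the uniform decay $|K(x,t)| \lesssim |t|^{-1/2}$. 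Outside the stationary phase region $|x| \sim |t|\,|\xi_0|^{2j-1}$, integration by parts provides additional rapid decay, effectively localizing $K$ in the $x$ variable.

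The conclusion then follows by mixed-norm Young's inequality: the $|t|^{-1/2}$ time singularity is the Hardy--Littlewood--Sobolev kernel giving the embedding $L^{4/3}_t \to L^4_t$, while the effective spatial localization of $K$ supplies the $L^{q'}_x \to L^q_x$ gain under the scaling relation $1/r = 1/2 + 1/q$, equivalently $1/q' - 1/q = 2/q$.

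\textbf{Main obstacle.} The uniform $|t|^{-1/2}$ time decay of $K$ is immediate from stationary phase, but the spatial part of the mixed-norm Young argument is delicate and requires either a careful account of the spatial localization sketched above or, alternatively, Stein's complex interpolation between the limiting endpoint $q = \infty$ (where the uniform bound alone is sufficient, since $TT^*$ maps $L^{4/3}_t L^1_x \to L^4_t L^\infty_x$ by HLS in $t$) and a Plancherel-based $L^2_x$ identity for the free propagator.
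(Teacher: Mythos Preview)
Your $TT^*$ setup proves the wrong estimate. The $TT^*$ lemma gives $T:L^2_\xi\to L^4_tL^q_x$ from $TT^*:L^{4/3}_tL^{q'}_x\to L^4_tL^q_x$, but the proposition asserts $T:L^{r'}_\xi\to L^4_tL^q_x$ with $\frac{1}{r'}=\frac12-\frac1q<\frac12$, i.e.\ $r'>2$. A scaling check shows the difference is real: under $\hat u_0\mapsto\hat u_0(\cdot/\lambda)$ one has $\norm{I^\sigma u}_{L^4_tL^q_x}\sim\lambda^{1+\sigma-1/q-j/2}$, so scale invariance with $\sigma=\tfrac{j-1}{2}$ forces the input exponent $\tfrac1{r'}=\tfrac12-\tfrac1q$, \emph{not} $\tfrac12$. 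With $L^2$ input the scaling-critical weight would be $\sigma=\tfrac{j-1}{2}+\tfrac1q$, so the bound you are trying to prove via $TT^*$ is simply false for $q<\infty$.

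Neither of your proposed fixes closes this. The kernel $K(\cdot,t)$ is \emph{not} spatially localized: for $j=1$ it is exactly $c\,|t|^{-1/2}e^{ix^2/4t}$, with $|K(x,t)|\equiv c|t|^{-1/2}$ for all $x$, and the same phenomenon persists for $j\ge2$ since every $x$ carries its own stationary point. Hence $K(\cdot,t)\notin L^p_x$ for any $p<\infty$ and the mixed-norm Young argument cannot reach $q<\infty$. Your interpolation alternative pairs $T:L^2\to L^4_tL^\infty_x$ with a Plancherel identity; but any Plancherel endpoint still has $L^2$ input, so interpolation never leaves the $L^2$ line and cannot produce the required $L^{r'}$ domain. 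The paper instead bilinearizes: it writes $\norm{I^\sigma u}_{L^4_tL^q_x}^2=\norm{|v|^2}_{L^2_tL^{q/2}_x}$, applies a Sobolev embedding to land in $L^2_{xt}$, computes $\F(|v|^2)$ via the change of variables $\xi_1=\tfrac{\xi}{2}+y$ (so that the Jacobian $|g'(y)|\gtrsim|\xi|y^{2(j-1)}$ exactly absorbs the $|\xi|^{2\sigma}$ weight on the high-frequency factor), and closes with Hardy--Littlewood--Sobolev in the $\xi$ variable --- this last step is what produces the non-$L^2$ exponent $r'$.
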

\begin{proof}
    We at first assume, that $\hat{u}_0(\xi) = \chi_{(0,\infty)}(\xi)\hat{u}_0(\xi)$. Furthermore let $v = I^\sigma u$, then
    \begin{equation*}
    \norm{I^\sigma u}_{L^4_t L^q_x}^4 = \norm{|v|^2}_{L^2_t L^{\frac{q}{2}}_x}^2 \lesssim \norm{I^\eps |v|^2}_{L^2_{xt}}^2 = \norm{\F I^\eps |v|^2}_{L^2_{xt}}^2,
    \end{equation*}
    where we have $\eps = \1{2} - \frac{2}{q}$. Calculating the Fourier transform and substituting $x = \xi_1 - \frac{\xi}{2}$ we get
    \begin{equation}\label{eq:fs-fourier-trans}
    \F (I^\eps v\overline{v})(\xi, \tau) \sim \int_\R |\xi|^\eps \delta(g(x) - \tau) \hat{u}_0\left(\frac{\xi}{2} + x\right)\hat{\overline{u}}_0\left(\frac{\xi}{2} - x\right) \d{x}.
    \end{equation}
    In order to rid ourselves of the Dirac delta present in the integral we derive a lower bound on the derivative of its argument:
    \begin{align}
    \nonumber g(x) = \xi_1^{2j} - \xi_2^{2j} = \sum_{l=1}^{j} \binom{2j}{2l-1} \left(\frac{\xi}{2}\right)^{2(j-l)+1} x^{2l-1}\\
    |g^\prime(y)| \sim |\xi| \sum_{l=1}^{j} (2l-1) \binom{2j}{2l-1} \left(\frac{\xi}{2}\right)^{2(j-l)} x^{2l} \gtrsim |\xi|y^{2(j-1)} \label{eq:dirac-fct-est}
    \end{align}
    In~\eqref{eq:dirac-fct-est} $y$ refers to the single real solution that $g(x) - \tau = 0$ admits, as a sum of monotone functions. With this we can simplify~\eqref{eq:fs-fourier-trans} to
    \begin{equation*}
    \F (I^\eps v\overline{v})(\xi, \tau) \lesssim |\xi|^{\eps-\frac{1}{2}}\frac{ y^{-(j-1)}}{\sqrt{|g^\prime(y)|}} \hat{u}_0\left(\frac{\xi}{2} + y\right)\hat{\overline{u}}_0\left(\frac{\xi}{2} - y\right).
    \end{equation*}
    Thanks to our assumed condition on the support of $u_0$ we only have a contribution if $\frac{\xi}{2} + y \ge 0$ and $-\frac{\xi}{2} + y \ge 0$ which allows us to write $2y = (\frac{\xi}{2} + y) + (-\frac{\xi}{2} + y) = |\frac{\xi}{2} + y| + |\frac{\xi}{2} - y|$. Thus we control the arguments of $\hat{u}_0$ and $\hat{\overline{u}}_0$ and with that the derivatives on these terms via $y$.
    \begin{equation*}
    \lesssim \frac{|\xi|^{\eps - \1{2}}}{\sqrt{|g^\prime(y)|}}\F_x(I^{-\frac{j-1}{2}}u_0)\left(\frac{\xi}{2} + y\right)(\F_x I^{-\frac{j-1}{2}}\overline{u}_0)\left(\frac{\xi}{2} - y\right)
    \end{equation*}
    Piecing the $L^2_{\xi\tau}$-norm together and substituting the measure $\d{\tau} = g(y)\d{y}$ and $z_\pm = y \pm \frac{\xi}{2}$ gives
    \begin{align*}
    \norm{\F I^\eps |v|^2}_{L^2_{xt}}^2 &\lesssim \int \frac{|\xi|^{2\eps - 1}}{|g^\prime(y)|}\left|\F_x(I^{-\frac{j-1}{2}}u_0)\left(\frac{\xi}{2} + y\right)(\F_x I^{-\frac{j-1}{2}}\overline{u}_0)\left(\frac{\xi}{2} - y\right)\right|^2 \d\xi\d\tau\\
    &\lesssim \int |z_+ - z_-|^{2\eps - 1} |\hat{u_0}(z_+)\hat{u}_0(z_-)|^2 \d{z_+}\d{z_-}.
    \end{align*}
    An application of the Hardy-Littlewood-Sobolev inequality requires $0 < 1 - 2\eps < 1$ and $\frac{4}{r^\prime} + 1 - 2\eps = 2$, which is equivalent to $4 < q < \infty$ and $\1{r} = \1{2} + \1{q}$. So HLS gives us the desired upper bound. The support condition on $\hat{u}_0$ can be lifted by noting that both norms on the left and right hand side of the inequality are invariant with respect to complex conjugation.
\end{proof}

Interpolating the above proposition with the endpoint of the Riemann-Lebesgue lemma $\norm{u}_{L^\infty_{xt}} \lesssim \norm{u}_{\widehat{L^\infty_{xt}}}$ gives
\begin{corollary}
    Let $\1{r} = \frac{2}{p} + \1{q}$, $0 < \1{q} < \1{4}$ and $0 \le \1{p} \le \1{4}$. Then one finds that
    \begin{equation*}
    \norm{I^{\frac{2(j-1)}{p}} u}_{L^p_t L^q_x} \lesssim \norm{u_0}_{\widehat{L^r_x}}.
    \end{equation*}
\end{corollary}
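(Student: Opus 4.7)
The plan is to apply Stein's complex interpolation theorem to the analytic family of operators
\begin{equation*}
T^z u_0 = I^z \bigl( e^{(-1)^j t \partial_x^{2j}} u_0 \bigr), \qquad 0 \le \operatorname{Re}(z) \le \tfrac{j-1}{2}.
\end{equation*}
Since the multiplier symbol $|\xi|^z$ is of modulus $|\xi|^{\operatorname{Re}(z)}$, the family is admissible in the sense of Stein, with the requisite growth along vertical lines controlled by purely imaginary rescalings that are bounded uniformly on the Fourier side.

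On the right boundary $\operatorname{Re}(z) = \tfrac{j-1}{2}$, I invoke the previous proposition: for any fixed $q_0 \in (4, \infty)$ with $\tfrac{1}{r_0} = \tfrac{1}{2} + \tfrac{1}{q_0}$,
\begin{equation*}
\norm{T^z u_0}_{L^4_t L^{q_0}_x} \lesssim \norm{u_0}_{\widehat{L^{r_0}_x}}.
\end{equation*}
On the left boundary $\operatorname{Re}(z) = 0$, the trivial Riemann--Lebesgue endpoint (together with the fact that the free propagator has unit-modulus symbol) gives
\begin{equation*}
\norm{T^z u_0}_{L^\infty_{xt}} \lesssim \norm{\hat{u}_0}_{L^1_\xi} = \norm{u_0}_{\widehat{L^\infty_x}}.
\end{equation*}
Stein interpolation at an intermediate point $z = \theta \tfrac{j-1}{2}$, with $\theta \in (0,1]$, yields
\begin{equation*}
\norm{I^{\theta(j-1)/2} u}_{L^{p_\theta}_t L^{q_\theta}_x} \lesssim \norm{u_0}_{\widehat{L^{r_\theta}_x}},
\end{equation*}
where $\tfrac{1}{p_\theta} = \tfrac{\theta}{4}$, $\tfrac{1}{q_\theta} = \tfrac{\theta}{q_0}$ and $\tfrac{1}{r_\theta} = \tfrac{\theta}{r_0} = \tfrac{\theta}{2} + \tfrac{\theta}{q_0}$.

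Finally, I match exponents to the statement: given $\tfrac{1}{p} \in [0, \tfrac{1}{4}]$ and $\tfrac{1}{q} \in (0, \tfrac{1}{4})$, set $\theta = \tfrac{4}{p}$ and $q_0 = \tfrac{4q}{p}$, which forces $q_0 > 4$ under the stated regime. Then $\tfrac{\theta(j-1)}{2} = \tfrac{2(j-1)}{p}$ and $\tfrac{1}{r_\theta} = \tfrac{\theta}{2} + \tfrac{1}{q} = \tfrac{2}{p} + \tfrac{1}{q} = \tfrac{1}{r}$, as required. Since the two endpoint inequalities are routine (the right one is already established, the left is essentially Fourier inversion) and the analytic family is the standard Riesz potential, the only ``work'' is to check the admissibility hypotheses for Stein's theorem, which I expect to be the most tedious but not conceptually difficult step.
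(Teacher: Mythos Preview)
Your approach is exactly the paper's: interpolate the Fefferman--Stein proposition (at $L^4_t L^{q_0}_x$) against the Riemann--Lebesgue endpoint $\norm{u}_{L^\infty_{xt}} \lesssim \norm{u_0}_{\widehat{L^\infty_x}}$, and your use of Stein's analytic interpolation for the family $T^z = I^z \circ e^{(-1)^j t\partial_x^{2j}}$ is the natural way to make that one-line remark rigorous. One slip: your claim that the stated hypotheses ``force $q_0 > 4$'' is not quite right, since $q_0 = \tfrac{4q}{p} > 4$ is equivalent to $\tfrac{1}{q} < \tfrac{1}{p}$, which is not implied by $0 < \tfrac{1}{q} < \tfrac{1}{4}$ and $0 \le \tfrac{1}{p} \le \tfrac{1}{4}$ alone; the interpolation as written only reaches the range $\tfrac{1}{q} < \tfrac{1}{p}$ (the paper's own one-line proof has the same limitation, and the only case used downstream is the diagonal $p = q = 3r$, which sits at the boundary $q_0 = 4$).
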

The diagonal case $p = q = 3r$ is of special interest and the only one we will make use of. Using the transfer principle we have the estimate
\begin{equation}\label{eq:fs-est}
\norm{I^{\frac{2(j-1)}{3r}} u}_{L^{3r}_{xt}} \lesssim \norm{u}_{\hat X_{0,b}^r}
\end{equation}
as long as $b > \1{r}$ and $0 \le \1{r} < \frac{3}{4}$.

\subsection{Trilinear estimates}

Particularly in the realm of $r \ll 2$ we rely on a trilinear refinement of a Strichartz type estimate in order to derive our local well-posedness result. Specifically we rely on it in proving the trilinear estimates leading to Theorems~\ref{thm:wp-hat} and~\ref{thm:wp-modulation}. Though in contrast to the mKdV hierarchy, we may prove our trilinear estimate in a more general setting, not relying on a specific frequency constellation; see~\cite{AGTowers}*{Section~3.2}. This parallels the $j=1$ case, see for example~\cite{Grünrock2005}.

\begin{proposition}
    Let $1 < p_1 < p < p_0 < \infty$, $p < p_0^\prime$, $\frac{3}{p} = \1{p_0} + \frac{2}{p_1}$ and $\frac{2}{p_1} < 1 + \1{p}$. Then we have the estimate
    \begin{equation}\label{eq:simple-trilin-est}
    \norm{u v \overline{w}}_{\widehat{L^p_{xt}}} \lesssim \norm{u_0}_{\widehat{L^{p_0}_x}} \norm{I^{-\frac{j-1}{p}} v_0}_{\widehat{L^{p_1}_x}}\norm{I^{-\frac{j-1}{p}} w_0}_{\widehat{L^{p_1}_x}}.
    \end{equation}
\end{proposition}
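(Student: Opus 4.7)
My approach mirrors the Fefferman-Stein proof in this section: prove the estimate by computing the Fourier transform of $uv\overline{w}$ directly, use a lower bound on the derivative of the time-frequency phase to resolve the delta, and close by classical inequalities. Since no specific frequency interaction is singled out in the hypothesis, this should be significantly less delicate than the mKdV analog in~\cite{AGTowers}, and parallels the $j = 1$ case of Grünrock~\cite{Grünrock2005}.

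I would first reduce, by the triangle inequality and splitting into frequency halves, to non-negative data with Fourier support in $[0, \infty)$. On this sector
\begin{equation*}
\F_{xt}(uv\overline{w})(\xi, \tau) = c \int \hat u_0(\xi_1) \hat v_0(\xi_2) \overline{\hat w_0(\xi_3)} \, \delta\bigl(\tau - \Phi(\xi_1,\xi_2)\bigr) \d{\xi_1}\d{\xi_2},
\end{equation*}
with $\xi_3 = \xi_1 + \xi_2 - \xi$ and $\Phi(\xi_1, \xi_2) = (-1)^{j+1}[\xi_1^{2j} + \xi_2^{2j} - \xi_3^{2j}]$. For fixed $\xi_2$, the partial derivative $\partial_{\xi_1}\Phi = 2j(-1)^{j+1}(\xi_1^{2j-1} - \xi_3^{2j-1})$ satisfies $|\partial_{\xi_1}\Phi| \sim |\xi_1 - \xi_3|(\xi_1^{2(j-1)} + \xi_3^{2(j-1)})$. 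Resolving the delta yields a pointwise bound on the modulus of the Fourier transform in terms of an integral weighted by the inverse of this quantity.

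The rest is bookkeeping: raise to the $p'$-th power, change variables from $\tau$ back to $\xi_1$ (picking up a compensating factor of $|\partial_{\xi_1}\Phi|$), then apply Hölder in $\xi_2$ with exponents $(p_1', p_1)$ and Hardy-Littlewood-Sobolev in $\xi$. The scaling relation $3/p = 1/p_0 + 2/p_1$ is exactly the HLS exponent identity; the conditions $p < p_0'$ and $2/p_1 < 1 + 1/p$ keep the HLS exponents in the admissible range and ensure the fractional integral has positive order. The residual weight $(\xi_1^{2(j-1)} + \xi_3^{2(j-1)})^{-p'/p}$ is absorbed into the $I^{-(j-1)/p}$ operators on $v_0$ and $w_0$ once symmetrically distributed between $\xi_2$ and $\xi_3$.

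The main obstacles I anticipate are, first, managing the weight $|\xi_1 - \xi_3|^{-p'/p}$ across the resonance set $\xi_1 = \xi_3$ -- while this set has measure zero in $(\xi, \tau)$, the weight is not locally integrable for all admissible $p$, so the change of variables needs to be interpreted carefully (perhaps by excising a null neighborhood or by Hölder/interpolation tricks that avoid the pointwise inversion of the Jacobian). Second, the symmetric-in-$(\xi_1, \xi_3)$ weight $(\xi_1^{2(j-1)} + \xi_3^{2(j-1)})^{-p'/p}$ must be translated into a product of weights on $\xi_2$ and $\xi_3$ compatible with the $I^{-(j-1)/p}$ operators on the right-hand side; I would handle this by a case split on which of $|\xi_1|, |\xi_3|$ dominates and, in the region where $|\xi_1|$ is largest, use the relation $\xi_2 = \xi - \xi_1 + \xi_3$ together with support considerations to transfer the weight from $\xi_1$ onto $\xi_2$.
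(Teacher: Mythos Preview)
Your general framework is right and mirrors the paper's: compute the space-time Fourier transform, resolve the Dirac delta via a lower bound on the phase derivative, then close with H\"older and Hardy--Littlewood--Sobolev. The difference is in \emph{which} variable you resolve the delta, and this is exactly what generates the obstacles you flag.

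You fix $\xi_2$ and resolve in $\xi_1$, producing the Jacobian $|\partial_{\xi_1}\Phi| \sim |\xi_1 - \xi_3|\bigl(|\xi_1|^{2(j-1)} + |\xi_3|^{2(j-1)}\bigr)$. But the derivatives on the right-hand side of the estimate sit on $v_0$ and $w_0$, i.e.\ on the variables $\xi_2$ and $\xi_3$, not on $\xi_1$. Your ``obstacle 2'' is therefore not a bookkeeping issue but a genuine mismatch: there is no clean way to transfer the weight $|\xi_1|^{2(j-1)}$ onto $|\xi_2|$ in general, and the case split you sketch (using $\xi_2 = \xi - \xi_1 + \xi_3$ together with the positive-frequency reduction) does not obviously close. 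The positive-frequency reduction itself is also more delicate here than in the Fefferman--Stein proof, since you have three independent functions rather than $v\overline{v}$.

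The paper sidesteps both obstacles by instead fixing $\xi_1$ and substituting $\xi_{2,3} = \frac{\xi-\xi_1}{2} \pm x$, then resolving the delta in the relative coordinate $x$. The resulting phase $g(\xi_1;x) = \xi_1^{2j} + \xi_2^{2j} - \xi_3^{2j}$ is manifestly monotone in $x$ (it is a sum of odd powers of $x$ with nonnegative coefficients), and the Jacobian bound reads $|g'(\xi_1;y)| \gtrsim |\xi-\xi_1|\bigl(|\xi-\xi_1|^{2(j-1)} + y^{2(j-1)}\bigr)$. Since $\xi_2 + \xi_3 = \xi - \xi_1$ and $\xi_2 - \xi_3 = 2y$, this weight is already symmetric in $\xi_2,\xi_3$ and matches the $I^{-(j-1)/p}$ operators directly. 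The paper then applies H\"older in $\xi_1$ with exponents $(p,p')$: the first factor $\bigl(\int |\hat u_0(\xi_1)|^p |\xi-\xi_1|^{-(1-\theta)p}\, d\xi_1\bigr)^{1/p}$ is handled by the weak Young inequality in $\xi$ (this is where $p_0$ and the choice $\theta = 1/p_0'$ enter), and the second factor, after the substitution $d\tau = |g'(y)|\,dy$ and $z_\pm = \frac{\xi-\xi_1}{2} \pm y$, becomes a double integral in $z_+,z_-$ with kernel $|z_+ + z_-|^{-(\theta p'-1)}$, closed by HLS. No sign reduction on the supports is needed.

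In short: resolve the delta in the variable symmetric between the two factors that carry the derivative gain, not in $\xi_1$. Once you make that switch, both of your anticipated obstacles disappear.
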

\begin{proof}
    We begin by taking the Fourier transform in both space- and time-variable of the product $uv\overline{w}$ and substituting $\xi_{2,3} = \frac{\xi - \xi_1}{2} \pm x$
    \begin{align*}
    \F (u v \overline{w})(\xi, \tau) \sim \int_* \delta(g(\xi_1; x)-\tau) \hat{u}_0(\xi_1)\hat{v}_0\left(\frac{\xi - \xi_1}{2} + x\right)\hat{\overline w}_0\left(\frac{\xi - \xi_1}{2} - x\right) \d\xi_1\d\xi_2,
    \end{align*}
    where in the argument of the Dirac delta
    \begin{align*}
    g(\xi_1; x) &= \xi_1^{2j} + \xi_2^{2j} - \xi_3^{2j}
    = \xi_1^{2j} + \sum_{k=0}^{2j} \binom{2j}{k} \left(\frac{\xi - \xi_1}{2}\right)^{2j-k} (x^k - (-x)^k)\\&= \xi_1^{2j} + (\xi-\xi_1)\sum_{l=1}^j \binom{2j}{2l-1} \left(\frac{\xi - \xi_1}{2}\right)^{2(j-l)} x^{2l-1}.
    \end{align*}
    As a sum of monotone functions $g(\xi_1; x)$ only admits a single (real) solution with respect to $x$ of $g(x)-\tau=0$, which we will call $y \in \R$. We can bound the derivative of $g$ from below at this root by
    \begin{align*}
    |g^\prime(\xi_1; y)| &= |\xi - \xi_1| \sum_{l=1}^j (2l-1)\binom{2j}{2l-1} \left(\frac{\xi-\xi_1}{2}\right)^{2(j-l)}y^{2(l-1)} \\&\gtrsim |\xi - \xi_1|(|\xi - \xi_1|^{2(j-1)} + y^{2(j-1)}).
    \end{align*}
    Having estimated $|g'(\xi_1, y)|$ we may move back to proving our trilinear estimate. An application of Hölder's inequality splits the integral into two parts:
    \begin{align}
    &\F (u v \overline{w})(\xi, \tau) = \int \frac{ \hat{u}_0(\xi_1) \hat{v}_0(\frac{\xi - \xi_1}{2} + y)\hat{\overline w}_0(\frac{\xi - \xi_1}{2} - y) }{|g^\prime(\xi_1; y)|}\d\xi_1 \\
   \le &\paren{\int \frac{|\hat{u}_0(\xi_1)|^p \d\xi_1}{|\xi-\xi_1|^{(1-\theta)p}}}^\1{p} \paren{\int  \frac{|\hat{v}_0(\frac{\xi - \xi_1}{2} + y)\hat{\overline w}_0(\frac{\xi - \xi_1}{2} - y)|^{p^\prime}|\xi-\xi_1|^{p^\prime}}{|\xi-\xi_1|^{\theta p^\prime}|g^\prime(\xi_1, y)|^{p^\prime}}\d\xi_1}^\1{p^\prime}. \label{eq:trilin-fourier-trans}
    \end{align}
    To estimate the first factor in~\eqref{eq:trilin-fourier-trans} we use the weak Young inequality to deal with the $L^{p^\prime}_\xi$-norm
    \begin{equation*}
    \norm{|\hat{u}_0|^p * |\cdot|^{(\theta - 1)p}}_{L_\xi^{\frac{p^\prime}{p}}}^\1{p} \lesssim \paren{\norm{|\hat{u}_0|^p}_{L_\xi^{\frac{p_0^\prime}{p}}} \norm{|\cdot|^{(\theta - 1)p}}_{L_\xi^{\1{(\theta - 1)p},\infty}}}^\1{p} \lesssim \norm{u_0}_{\widehat{L^{p_0}_x}}.
    \end{equation*}
    Its application calls for
    \begin{equation*}
    0 < (1-\theta)p < 1, \quad 1 < \frac{p_0^\prime}{p} < \1{1 - (1-\theta)p}, \quad \theta = \1{p_0^\prime}
    \end{equation*}
    which are all fulfilled thanks to our requirements for the Hölder exponents.

    Moving on to the second factor in~\eqref{eq:trilin-fourier-trans}, where we rely on our bound on the derivative~$|g'(\xi_1; y)| \gtrsim |\xi - \xi_1|(|\xi - \xi_1|^{2(j-1)} + y^{2(j-1)})$, we may estimate
    \begin{align}
    &\paren{\int  \frac{|\hat{v}_0(\frac{\xi - \xi_1}{2} + y)\hat{\overline w}_0(\frac{\xi - \xi_1}{2} - y)|^{p^\prime}|\xi-\xi_1|^{p^\prime}}{|\xi-\xi_1|^{\theta p^\prime}|g^\prime(\xi_1, y)|^{p^\prime}}\d\xi_1}^\1{p^\prime}\\
    \lesssim &\paren{\int \frac{|(\F_x I^{-\frac{j-1}{p}}v_0)(\frac{\xi - \xi_1}{2} + y)(\F_x I^{-\frac{j-1}{p}}w_0)(\frac{\xi - \xi_1}{2} - y)|^{p^\prime} \d\xi_1}{|\xi-\xi_1|^{\theta p^\prime - 1}|g^\prime(\xi_1, y)|}}^\1{p^\prime}. \label{eq:trilin-middle-est}
    \end{align}
    Now taking the $L^{p^\prime}_\tau$-norm of the preceding line and then substituting both the measure $\d\tau = g^\prime(\xi_1; y)\d{y}$ and $z_\pm = \frac{\xi - \xi_1}{2} \pm y$ we arrive at
    \begin{align}
    \paren{\int \frac{|(\F_x I^{-\frac{j-1}{p}}v_0)(z_+)(\F_x I^{-\frac{j-1}{p}}w_0)(z_-)|^{p^\prime} \d{z_+}\d{z_-}}{|z_+ + z_-|^{\theta p^\prime - 1}}}^\1{p^\prime}\\
    \lesssim \quad\norm{I^{-\frac{j-1}{p}} v_0}_{\widehat{L^{p_1}_x}} \norm{I^{-\frac{j-1}{p}} w_0}_{\widehat{L^{p_1}_x}},
    \end{align}
    where we used the Hardy-Littlewood-Sobolev inequality, noting that $\theta = \frac{3}{p^\prime} - \frac{2}{p_1^\prime} \in (0,1)$ by our conditions on the Hölder exponents and thus that $\theta p^\prime - 1 \in (0,1)$, $\frac{2}{p_1^\prime} + \theta p^\prime - 1 = 2$ and $p_1^\prime > 1$.
    This concludes the proof of the trilinear estimate.
\end{proof}

In order for this trilinear estimate to actually be useful (we want the same $\widehat{L^r_x}$-norm on all factors) we must interpolate this estimate with the Fefferman-Stein inequality from the previous subsection.

\begin{corollary}\label{cor:interpolated-trilinear}
    Let $1 < r \le 2$, then there exist $s_{0},s_{1} \ge 0$ such that $s_0 + 2s_1 = \frac{2(j-1)}{r}$ and
    \begin{equation*}
    \norm{u v \overline{w}}_{\widehat{L^r_{xt}}} \lesssim \norm{I^{-s_0} u_0}_{\widehat{L^r_x}} \norm{I^{-s_1} v_0}_{\widehat{L^r_x}} \norm{I^{-s_1} w_0}_{\widehat{L^r_x}}.
    \end{equation*}
    In addition, if $b > \1{r}$ then
    \begin{equation}\label{eq:trilin-est}
    \norm{u v \overline{w}}_{\widehat{L^r_{xt}}} \lesssim \norm{I^{-s_0} u}_{\hat X^r_{0,b}} \norm{I^{-s_1} v}_{\hat X^r_{0,b}} \norm{I^{-s_1} w}_{\hat X^r_{0,b}}.
    \end{equation}
\end{corollary}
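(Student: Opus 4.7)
The goal is to derive from the asymmetric trilinear bound~\eqref{eq:simple-trilin-est} an estimate that is symmetric in the function spaces on the right-hand side. The natural route is interpolation against a symmetric endpoint supplied by the Fefferman--Stein estimate~\eqref{eq:fs-est}. First I would produce the symmetric endpoint directly: combining Hölder's inequality, the Hausdorff--Young embedding $L^r_{xt} \hookrightarrow \widehat{L^r_{xt}}$ (valid for $1 < r \le 2$), and~\eqref{eq:fs-est} applied to each factor yields
\begin{equation*}
\|uv\overline{w}\|_{\widehat{L^r_{xt}}} \lesssim \|uv\overline{w}\|_{L^r_{xt}} \lesssim \|u\|_{L^{3r}_{xt}}\|v\|_{L^{3r}_{xt}}\|w\|_{L^{3r}_{xt}} \lesssim \prod_{f_0} \|I^{-\frac{2(j-1)}{3r}} f_0\|_{\widehat{L^r_x}},
\end{equation*}
which is already of the claimed form with the symmetric choice $s_0 = s_1 = \frac{2(j-1)}{3r}$, summing to $\frac{2(j-1)}{r}$ as required --- but only as long as $1/r < 3/4$, i.e.\ $r > 4/3$.

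For the remaining range $1 < r \le 4/3$ I would use Stein's multilinear complex interpolation between~\eqref{eq:simple-trilin-est} at some $(p, p_0, p_1)$ with $p < 4/3$ and the symmetric endpoint above taken at some $r_1 > 4/3$. The interpolation identities $[\widehat{L^{q_0}}, \widehat{L^{q_1}}]_{[\theta]} = \widehat{L^r}$ from Section~\ref{sec:notation} together with the analytic variation of the potentials $I^{-s}$ along the strip allow one to select an interpolation parameter $\theta \in (0,1)$ so that both the left-hand ambient space and each of the three right-hand spaces become $\widehat{L^r}$ simultaneously. Because the total derivative loss in both endpoints equals $\frac{2(j-1)}{p}$ (respectively $\frac{2(j-1)}{r_1}$), with $\theta$ chosen so that $\frac{1}{r} = \frac{1-\theta}{p} + \frac{\theta}{r_1}$ one obtains an intermediate estimate with total loss $\frac{2(j-1)}{r}$, and this loss is automatically distributed as some nonnegative pair $(s_0, s_1)$ with $s_0 + 2s_1 = \frac{2(j-1)}{r}$ (distributing the $u_0$--slot derivative loss and the $v_0, w_0$--slot losses separately under interpolation).

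The second assertion~\eqref{eq:trilin-est} then follows from the first by the transfer principle for $\hat X_{s,b}^r$ spaces mentioned in Section~\ref{sec:notation}, which is valid since $b > 1/r$. The main obstacle is book-keeping: checking that the constraints on $(p_0, p_1)$ in~\eqref{eq:simple-trilin-est} (in particular $1 < p_1 < p < p_0 < p'$ with $\tfrac{3}{p} = \tfrac{1}{p_0} + \tfrac{2}{p_1}$) combine consistently with the requirement that $r$ lie on the interpolation segment between the chosen $p$ and $r_1 \in (4/3, 2]$, and that the resulting $s_0, s_1$ are indeed nonnegative in the whole range $r \in (1, 4/3]$. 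This amounts to verifying that the interpolation parameter $\theta$ can be chosen small enough for $v_0, w_0$ to keep a nonnegative derivative loss while still producing the correct ambient Lebesgue exponent on the right.
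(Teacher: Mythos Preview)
Your direct argument for $r > 4/3$ is correct and in fact simpler than the paper's, which runs the interpolation uniformly over the whole range. The symmetric choice $s_0 = s_1 = \tfrac{2(j-1)}{3r}$ already does the job there.

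The gap is in the range $1 < r \le 4/3$. If you interpolate the asymmetric trilinear estimate~\eqref{eq:simple-trilin-est} at parameters $(p,p_0,p_1)$ against a \emph{symmetric} Fefferman--Stein endpoint at $r_1$ (all four norms in $\widehat{L^{r_1}}$), then demanding that the left-hand norm and all three right-hand norms land simultaneously in $\widehat{L^r}$ forces
\[
\frac{1-\theta}{p} + \frac{\theta}{r_1} \;=\; \frac{1-\theta}{p_0} + \frac{\theta}{r_1} \;=\; \frac{1-\theta}{p_1} + \frac{\theta}{r_1},
\]
hence $p = p_0 = p_1$. This contradicts the strict inequalities $p_1 < p < p_0$ required in the hypotheses of~\eqref{eq:simple-trilin-est}, so no admissible choice of $\theta$ exists. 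The difficulty is not the size of $\theta$ but a rigid algebraic obstruction.

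The fix, and this is what the paper does, is to make the Fefferman--Stein endpoint itself \emph{asymmetric}: apply H\"older at $L^2_{xt}$ with exponents $3q_0, 3q_1, 3q_1$ where $q_0, q_1 > 4/3$ satisfy $\tfrac{1}{q_0} + \tfrac{2}{q_1} = \tfrac{3}{2}$, then use~\eqref{eq:fs-est} on each factor. The extra freedom in $(q_0,q_1)$ (with $q_0 > 2 > q_1$) exactly compensates the forced asymmetry $p_0 > p > p_1$, so the three interpolation constraints $\tfrac{1}{r} = \tfrac{1-\theta}{p} + \tfrac{\theta}{2} = \tfrac{1-\theta}{p_0} + \tfrac{\theta}{q_0} = \tfrac{1-\theta}{p_1} + \tfrac{\theta}{q_1}$ become solvable. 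The derivative bookkeeping you describe then goes through, with $s_0 = \tfrac{2(j-1)\theta}{3q_0}$ and $s_1 = 2(j-1)\bigl(\tfrac{1-\theta}{2p} + \tfrac{\theta}{3q_1}\bigr)$ summing to $\tfrac{2(j-1)}{r}$.
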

\begin{proof}
    Using Hölder's inequality we derive
    \begin{align}
    \norm{uvw}_{L^2_{xt}} &\lesssim \norm{u}_{L^{3q_0}_{xt}} \norm{v}_{L^{3q_1}_{xt}} \norm{w}_{L^{3q_1}_{xt}} \\&\lesssim \norm{I^{-\frac{2(j-1)}{3q_0}}u_0}_{\widehat{L^{q_0}_x}} \norm{I^{-\frac{2(j-1)}{3q_1}}v_0}_{\widehat{L^{q_1}_x}} \norm{I^{-\frac{2(j-1)}{3q_1}}w_0}_{\widehat{L^{q_1}_x}},
    \end{align}
    where $q_{0},q_{1} > \frac{4}{3}$ are chosen such that $\frac{1}{2} = \1{3q_0} + \frac{2}{3q_1}$. Furthermore interpolating with the trilinear estimate~\eqref{eq:simple-trilin-est} leads to the additional constraints $\1{r} = \frac{1-\theta}{p} + \frac{\theta}{2} = \frac{1-\theta}{p_0} + \frac{\theta}{q_0} = \frac{1-\theta}{p_1} + \frac{\theta}{q_1}$. The derivative gain on the factors is thus $s_0 = \frac{2(j-1\theta)}{3q_0}$ on the first and $s_1 = 2(j-1)(\frac{1-\theta}{2p} + \frac{\theta}{3q_1})$ on the other two, for a grand total of $s_0 + 2s_1 = \frac{2(j-1)}{r}$ as claimed.
\end{proof}

\begin{remark}
    It is at this point we would like to discuss the applicability of our estimates, particularly Corollary~\ref{cor:interpolated-trilinear}, to other problems only tangentially related to NLS-like equations. We refer to the recently published work~\cite{BrunLiLiuZine2023}, in which the cubic fractional Schrödinger equation (fNLS)
    \begin{equation}\label{eq:fNLS}
        i\partial_t u = I^\alpha u + |u|^2 u
    \end{equation}
    was studied on both the real line and the torus\footnote{On the torus the equation stated above~\eqref{eq:fNLS} is in fact not well-behaved at negative Sobolev regularities $s < 0$. In order to achieve positive results on the circle the equation has to be renormalised to $$ i\partial_t u = I^\alpha u + \left(|u|^2 - \1{\pi} \int_\T |u|^2 \d{x}\right) u $$ using a gauge-transformation to eliminate a certain set of resonant interactions.}. There, the local well-posedness in $H^s(\R)$ for $\frac{2-\alpha}{4} \le s < 0$ with $\alpha > 2$ and in $H^s(\T)$ for the same range of regularities was established. The local solutions could be extended globally in time for the range $\frac{2 - \alpha}{4} \le s < 0$ on the line and for $\frac{2 - \alpha}{6} \le s < 0$ on the circle.

    In~\cite{BrunLiLiuZine2023}*{Remark~1.12} the question of well-posedness of~\eqref{eq:fNLS} in Fourier-Lebesgue spaces was posed. Assuming, as is usual, the resonant interaction $\text{high}\times\text{high}\times\text{high}\to\text{low}$ is the culprit, our trilinear estimate from Corollary~\ref{cor:interpolated-trilinear} suggests that \eqref{eq:fNLS} is well-posed in $\hat H^s_r(\R)$ for $\frac{2 - \alpha}{3r} \le s$, $1 < r \le 2$ and $\frac{\alpha}{2} \in \N_{\ge 2}$. This would already cover a big chunk of the subcritical regime up to $s_c(r) = \frac{2 - \alpha r}{2r}$, where $r \to 1$.
\end{remark}


\section{Well-posedness results}\label{sec:multilinear-estimates}

Now we have all our smoothing estimates together we can deal with the necessary multilinear estimates that lead to Theorems~\ref{thm:wp-hat} and~\ref{thm:wp-modulation}. We separate out the cases dealing with Fourier-Lebesgue and modulation spaces.

For both families of spaces the cubic nonlinear terms are strictly less well behaved, so dealing with them requires separate analysis. In contrast the quintic and higher-order terms are more tame and we are thus able to prove a general multilinear estimate for these.

The latter estimates, specifically Corollaries~\ref{cor:wp-hat-estimate} and~\ref{cor:wp-modulation-estimate}, we establish by multilinear interpolation between an $X_{s,b}$ (corresponding to the case $r = 2$ or equivalently $p = 2$) and an (almost) endpoint estimate in the respective class of spaces.


\subsection{Multilinear estimates in $\hat X_{s,b}^r$ spaces}

\subsubsection{Estimates for cubic nonlinearities}

\begin{proposition}\label{prop:cubic-hat-estimate}
    Let $1 < r \le 2$, $s = \frac{j-1}{r'}$, $\alpha \in \N_0^3$ with $|\alpha| = 2(j-1)$ then there exist $b' > -\1{r'}$, $b > \1{r}$ and one has
    \begin{equation}
    \norm{\partial_x^{\alpha_1}u_1 \partial_x^{\alpha_2}\overline{u_2} \partial_x^{\alpha_3}u_3}_{\hat{X}_{s,b^\prime}^r} \lesssim \prod_{i=1}^{3} \norm{u_i}_{\hat{X}_{s,b}^r}.
    \end{equation}
\end{proposition}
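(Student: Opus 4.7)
The plan is to reduce the estimate by duality to a quadrilinear bound and dispose of the $2(j-1)$ loss of derivatives via the bilinear and trilinear Strichartz refinements of Section~\ref{sec:linear-est}. Using $(\hat X_{s,b'}^r)' \cong \hat X_{-s,-b'}^{r'}$, the claim is equivalent to the quadrilinear estimate
\begin{equation*}
\bigl|\langle \partial_x^{\alpha_1} u_1 \cdot \partial_x^{\alpha_2}\overline{u}_2 \cdot \partial_x^{\alpha_3} u_3,\, u_4\rangle_{L^2_{xt}}\bigr| \lesssim \norm{u_4}_{\hat X_{-s,-b'}^{r'}} \prod_{i=1}^3 \norm{u_i}_{\hat X_{s,b}^r},
\end{equation*}
which in Fourier coordinates becomes an integral over the hyperplane $\xi_1 - \xi_2 + \xi_3 - \xi_4 = 0$, $\tau_1 - \tau_2 + \tau_3 - \tau_4 = 0$ with multiplier $\xi_1^{\alpha_1}\xi_2^{\alpha_2}\xi_3^{\alpha_3}$.

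Next, I would perform a Littlewood-Paley decomposition $u_i = \sum_{N_i} u_{i,N_i}$, reducing to a single dyadic block (summation being absorbed by the $N^{0-}$ factor from~\eqref{eq:bernstein}). The convolution constraint forces the two largest of $N_1, N_2, N_3, N_4$ to be of order $N_{\max}$. By the $u_1 \leftrightarrow u_3$ symmetry I may assume $N_1 \ge N_3$ and split according to which slot carries the second large frequency. In the case $N_1 \sim N_2 \ge N_3, N_4$ I would pair $(u_1, \overline{u}_2)$ via the operator $I^+_{p,j}$ whose symbol $k_j^+(\xi_1,-\xi_2) \sim N_{\max}^{2j-1}$ supplies exactly the needed derivative gain; the estimate~\eqref{eq:bilin-est} with $p=q=r'$ yields an $\widehat{L^{r'}_{xt}}$ bound. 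The remaining pair $(u_3, u_4)$ is handled by the dual estimate~\eqref{eq:dual-bilin-free-params} for $I^{+,*}_{p,j}$, and the two are combined by Plancherel. The case $N_1 \sim N_4$ is treated analogously with $I^-_{p,j}$.

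For the dyadic configurations where the bilinear gain is insufficient — in particular the fully high-high-high-high regime at small $r$, and the sub-case where $\xi_1 \approx \xi_2$ makes the bilinear symbol itself degenerate — I would fall back on the trilinear refinement~\eqref{eq:trilin-est} of Corollary~\ref{cor:interpolated-trilinear}, which supplies the full loss $N_{\max}^{2(j-1)/r}$ distributed across three factors at once. Where even this threatens to be tight, one uses the resonance identity
\begin{equation*}
\sigma_1 - \sigma_2 + \sigma_3 - \sigma_4 = -\bigl(\xi_1^{2j} - \xi_2^{2j} + \xi_3^{2j} - \xi_4^{2j}\bigr)
\end{equation*}
to trade frequency gain for modulation gain via the $\langle\sigma_i\rangle^b$ weights. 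The choice $p = r'$ matches the Plancherel pairing of $\widehat{L^r}$ with $\widehat{L^{r'}}$ and, combined with the hypotheses $b_i > 1/r_i$ built into~\eqref{eq:bilin-est}--\eqref{eq:bilin-dual}, produces the strict inequalities $b > 1/r$ and $b' > -1/r'$ with some $\varepsilon$-room to spare.

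The main obstacle is keeping this balance closed at the critical regularity $s = \tfrac{j-1}{r'}$ across the full range $1 < r \le 2$. For $r$ near $2$ the bilinear gain $N_{\max}^{2(2j-1)/r'}$ comfortably dwarfs the effective loss $N_{\max}^{2(j-1)/r}$, but the gap shrinks as $r \to 1$, forcing the trilinear estimate to carry the innermost resonant sub-case. Additionally, one must carefully choose between the $I^+$ and $I^-$ variants: the even-order phase $\phi(\xi) = \xi^{2j}$ treats complex conjugation differently from the odd phases of the mKdV hierarchy studied in~\cite{AGTowers}, so the pairing $(u_1, \overline{u}_2)$ genuinely requires the $|\xi_1 - \xi_2|$ factor supplied by $k_j^+$, whereas the pairing $(u_1, u_3)$ or $(u_1, u_4)$ requires $k_j^-$. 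Tracking this conjugation bookkeeping simultaneously with the frequency/modulation trade-off is where the bulk of the work lies.
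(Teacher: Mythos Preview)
Your proposal reaches for the right toolbox---the bilinear refinements $I^\pm_{p,j}$ and the trilinear estimate of Corollary~\ref{cor:interpolated-trilinear}---but the organization differs from the paper's in a way that creates a real obstacle. The paper does \emph{not} dualize to a quadrilinear form. Instead it estimates the $\hat X^r_{s,b'}$ norm directly by \emph{composing} operators: in the non-resonant case it writes (schematically) the product as $I^{+,*}_{\rho',j}\bigl(I^+_{r,j}(u_1,\overline u_2),\,u_3\bigr)$ and applies~\eqref{eq:bilin-dual} followed by~\eqref{eq:bilin-est}, landing straight in $\hat X^r_{0,b'}$ without ever introducing a fourth function. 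Your Plancherel-pairing scheme runs into trouble because the dual element $u_4 \in \hat X^{r'}_{-s,-b'}$ carries modulation weight $-b' < \frac{1}{r'}$, which is below the threshold $b_i > \frac{1}{r_i}$ demanded by the bilinear estimate~\eqref{eq:bilin-est}; and your choice ``$p=q=r'$'' in that estimate forces $r_1=r_2=r'$, whereas $u_1,u_2$ live in $\hat X^r$. The paper's composition sidesteps both issues by letting the dual bilinear operator absorb the negative $b'$ exponent on the \emph{output} side.

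Two further remarks. The resonance identity you invoke as a fallback is never used in the paper's argument: once the non-resonant case is handled by the bilinear/dual-bilinear composition and the resonant case $|\xi_1|\sim|\xi_2|\sim|\xi_3|$ by the trilinear estimate~\eqref{eq:trilin-est}, nothing remains. The paper's case division is correspondingly simpler---low frequency, $|\xi_{\max}| \gg |\xi_{\min}|$, and $|\xi_1|\sim|\xi_2|\sim|\xi_3|$---rather than the pairwise $N_i \sim N_j$ taxonomy you describe. Finally, a small sign issue: for $I^+_{p,j}(u_1,\overline u_2)$ the relevant symbol is $k_j^+(\xi_1,\xi_2)=|\xi_1+\xi_2|(\cdots)$ with $\xi_2$ the Fourier variable of $\overline u_2$, so the factor is $|\xi_1+\xi_2|$, not $|\xi_1-\xi_2|$; the paper exploits $|\xi_1+\xi_2|\gtrsim|\xi_1|$ (after possibly swapping $u_1\leftrightarrow u_3$) in the non-resonant case, which is automatic once one frequency is small.
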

\begin{proof}
    We divide the proof into different cases, depending on the size of the interacting frequencies.
    \begin{enumerate}[wide]
        \item \textbf{Low frequency case} $|\xi_{max}| \le 1$: Here, using the trivial estimate suffices, since $s \ge 0$:
        \begin{align*}
        \norm{\partial_x^{\alpha_1}u_1 \partial_x^{\alpha_2}\overline{u_2} \partial_x^{\alpha_3}u_3}_{\hat{X}_{s,b^\prime}^r} \lesssim \norm{u_1\overline{u_2}u_3}_{\widehat{L^{r}}_{xt}} \lesssim \prod_{i=1}^3 \norm{u_i}_{\widehat{L^{3r}}} \lesssim \prod_{i=1}^3 \norm{u_i}_{\hat{X}^r_{s,b}}.
        \end{align*}

        \item \textbf{Non-resonant interaction} $|\xi_{max}| \gg |\xi_{min}|$: If there is at least one \emph{small} frequency then without loss of generality we may assume that $|\xi_1 + \xi_2| \gtrsim |\xi_1|$ (otherwise swap the factors $u_1$ and $u_3$). This in turn allows us to estimate $k^+_j(\xi_1, \xi_2) \gtrsim |\xi_1|^{2j-1}$ and $k^{+,*}_j(\xi_1 + \xi_2, \xi_3) \gtrsim |\xi_1|^{2j-1}$.
        Applied to the quantity to be estimated this gives
        \begin{align*}
        \norm{\partial_x^{\alpha_1}u_1 \partial_x^{\alpha_2}\overline{u_2} \partial_x^{\alpha_3}u_3}_{\hat X_{s,b^\prime}^r}
        &\lesssim \norm{(J^{s + 2(j-1)} u_1) \overline{u_2} u_3}_{\hat X_{0,b^\prime}^r}\\
        &\lesssim \norm{I_{r,j}^+(J^{s + \frac{2j-1}{r'} - 1} u_1, \overline{u_2}) u_3}_{\hat X_{0,b^\prime}^r}\\
        &\lesssim \norm{I_{\rho',j}^{+,*} (I_{r,j}^+(J^{s + (2j-1)(\frac{1}{r'}-\frac{1}{\rho'}) - 1} u_1, \overline{u_2}), u_3)}_{\hat X_{0,b^\prime}^r},
        \intertext{where $\rho'$ is to be chosen later, according to the constraints set forth in the following. First, we want to assume $(2j-1)(\frac{1}{r'}-\frac{1}{\rho'}) - 1 \le 0$, which allows us to reshuffle the derivatives and apply estimate~\eqref{eq:bilin-dual}:}
        &\lesssim \norm{I_{\rho',j}^{+,*} (I_{r,j}^+(J^{s} u_1, \overline{u_2}), J^{(2j-1)(\frac{1}{r'}-\frac{1}{\rho'}) - 1}u_3)}_{\hat X_{0,b^\prime}^r}\\
        &\lesssim \norm{I_{r,j}^+(J^{s} u_1, \overline{u_2})}_{\widehat{L^{r}_{xt}}} \norm{J^{(2j-1)(\frac{1}{r'}-\frac{1}{\rho'}) - 1}u_3}_{\hat X_{0,-b^\prime}^{\rho'}}
        \end{align*}
        For this to hold we must have $1 < r < \infty$, $\infty \ge \rho' \ge r'$ and $b' < -\1{\rho'}$. Now for the first factor we may apply estimate~\eqref{eq:bilin-est} on the condition that $b > \1{r}$
        and for the second factor we use a Sobolev-embedding style estimate
        assuming that $b' + b > - \1{\rho'}$ and $\frac{2(j-1)}{r'} - \frac{2(j-1)}{\rho'} < s$. This is also the point where our argument breaks down for the classic cubic NLS, with $s = 0$. After choosing $\rho'$ appropriately the proof for this case is complete.

        \item \textbf{Resonant interaction} $|\xi_1| \sim |\xi_2| \sim |\xi_3| \gtrsim 1$: Now we may utilise our trilinear smoothing estimate. As is mentioned above we do not rely on a specific frequency constellation (their signs, see~\cite{AGTowers}*{Section~3.2}) for its application, so choosing $s_0, s_1 \ge 0$ so that~\eqref{eq:trilin-est} is applicable we may directly estimate
        \begin{align*}
        \norm{\partial_x^{\alpha_1}u_1 \partial_x^{\alpha_2}\overline{u_2} \partial_x^{\alpha_3}u_3}_{\hat X_{s,b^\prime}^r} &\lesssim \norm{(J^{s+s_0}u_1) (J^{s+s_1}\overline{u_2}) (J^{s+s_1}u_3)}_{\widehat{L^{r}_{xt}}}
        \lesssim \prod_{i=1}^3 \norm{u_i}_{\hat X^r_{s,b}},
        \end{align*}
        which concludes the proof.\qedhere
    \end{enumerate}
\end{proof}

\subsubsection{Estimates for quintic and higher-order nonlinearities}

The following proposition is the $X_{s,b}$ estimate we will later interpolate with, as mentioned in the beginning of this section. Because its proof does not rely on the specific number of factors that are complex conjugates it is responsible for the remark following Theorem~\ref{thm:wp-hat}.

\begin{proposition}\label{prop:quintic-estimate-sobolev}
    Let $2 \le k \le j$, $s > -\1{2}$, $\alpha \in \N_0^{2k+1}$ with $|\alpha| = 2(j-k)$. Then there exists a $b' > -\1{2}$ such that for all $b > \1{2}$ with $b' + 1 > b$ one has
    \begin{equation}
    \norm{\prod_{i=1}^{2k+1} \partial_x^{\alpha_i}u_i}_{X_{s,b^\prime}} \lesssim \prod_{i=1}^{2k+1} \norm{u_i}_{X_{s,b}}.
    \end{equation}
    Additionally for an arbitrary subset of the factors on the left hand side these may be replaced with their complex conjugates.
\end{proposition}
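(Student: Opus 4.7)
\emph{Plan.} Since $b' < 0$, the modulation weight $\langle\tau-\varphi(\xi)\rangle^{b'}$ is uniformly bounded, so the proposition reduces to establishing the $L^2_{xt}$ estimate
\begin{equation*}
\bignorm{J^s \prod_{i=1}^{2k+1} \partial_x^{\alpha_i} u_i}_{L^2_{xt}} \lesssim \prod_{i=1}^{2k+1} \|u_i\|_{X_{s,b}}.
\end{equation*}
After a Littlewood--Paley decomposition I relabel the factors so that $N_1 \ge \cdots \ge N_{2k+1}$, absorb the derivatives $\partial_x^{\alpha_i}$ into the losses $N_i^{\alpha_i}$, and bound the output Bessel factor trivially by $\langle N_1\rangle^{\max(s,0)}$ (using that the output spatial frequency is at most $\sim N_1$).

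The core step is to estimate $\|\prod u_{i,N_i}\|_{L^2_{xt}}$. I use a Hölder splitting
\begin{equation*}
L^2_{xt} \ \hookleftarrow \ L^\infty_x L^2_t \cdot \prod_{i=2}^{2k+1} L^{p_i}_x L^\infty_t, \qquad \sum_{i\ge 2} \frac{1}{p_i} = \frac{1}{2},
\end{equation*}
applying the Kato local smoothing bound \eqref{eq:kato2} to the highest-frequency factor $u_{1,N_1}$ (gaining $(2j-1)/2$ derivatives), and the maximal function estimate \eqref{eq:max-fct} or the trivial Sobolev bound $\|u\|_{L^\infty_{xt}}\lesssim\|u\|_{X_{1/2+,b}}$ on the remaining $2k\ge 4$ factors. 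The flexibility of the exponents $p_i$, together with the fact that $2k+1\ge 5$, gives us generous room in distributing losses. In the regime $N_1\sim N_2$, the single Kato bound is replaced by the bilinear refinement \eqref{eq:bilin-est} with $I_{2,j}^\pm$ (the sign of $k_j^\pm$ chosen to match the local conjugation pattern of the pair $(u_1,u_2)$), which transfers the full $(2j-1)/2$ derivative gain onto the pair.

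A direct exponent count in this Hölder decomposition falls short of $s>-\tfrac{1}{2}$ by a deficit of order $N_1^{j-k}$; this is the crux of the argument. To close this gap I decompose in parallel by modulation and use the resonance identity
\begin{equation*}
\tau - \varphi(\xi) \;=\; \sum_{i=1}^{2k+1}\bigl(\tau_i - \varphi(\xi_i)\bigr) \,+\, \Omega, \qquad \Omega = \varphi(\xi) - \sum_i \varphi(\xi_i).
\end{equation*}
In the problematic ``high $\times$ high $\to$ low'' frequency configuration driving the deficit, $|\Omega|\sim N_1^{2j}$, so at least one of the modulations $\langle\sigma\rangle$ or $\langle\sigma_i\rangle$ is of size $\gtrsim N_1^{2j}$. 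This yields either a factor $N_1^{2jb'}$ (from the output weight, small since $b'<0$) or $N_1^{-2jb}$ (by trading the $\langle\sigma_i\rangle^b$ factor in a single $\|u_i\|_{X_{s,b}}$ norm against an $L^2_{xt}$ estimate of $u_i$). Because the hypothesis $b'+1>b$ allows $b-b'$ to be arbitrarily close to $1$, the resulting modulation gain $N_1^{-2j(b-b')}$ can be made arbitrarily close to $N_1^{-2j}$, which comfortably absorbs the deficit $N_1^{j-k}$ for all $2\le k\le j$.

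The claim about arbitrary complex conjugation follows with no further work: all linear bounds \eqref{eq:kato1}--\eqref{eq:max-fct} and the bilinear operators $I_{2,j}^{\pm}$ (after picking the appropriate sign) are $L^p$-based and hence invariant under conjugation, and the resonance identity remains valid after the obvious sign changes in the $\xi_i$.

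\emph{Main obstacle.} The chief difficulty is the high$\times$high$\to$low frequency configuration, where the Hölder splitting alone cannot reach negative Sobolev exponents; one must quantitatively exploit the large resonance $|\Omega|\sim N_1^{2j}$ via a modulation decomposition. The bookkeeping of which modulation is the large one, and how the $b,b'$ weights are allocated, is the main technical case analysis, and it is precisely this mechanism that forces the two conditions $s>-\tfrac{1}{2}$ and $b'+1>b$.
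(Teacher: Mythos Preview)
There is a real gap in your plan. Your resonance mechanism rests on the claim that in the problematic high$\times$high$\to$low configuration one has $|\Omega|\sim N_1^{2j}$, but this fails for general conjugation patterns, which the proposition expressly allows. Take $k=2$ and the pattern $u_1\,\overline{u_2}\,u_3\,u_4\,u_5$ with $\xi_1=N$, $\xi_2=-N$ and $\xi_3,\xi_4,\xi_5=O(1)$: the output frequency is $O(1)$, yet because $\varphi$ is even one has
\[
\Omega=\varphi(\xi_1)-\varphi(\xi_2)+\varphi(\xi_3)+\varphi(\xi_4)+\varphi(\xi_5)-\varphi(\xi)=N^{2j}-N^{2j}+O(1)=O(1).
\]
In this same configuration your bilinear refinement on the top two factors also collapses, because the conjugation of $u_2$ forces the operator $I^{+}_{2,j}$ with symbol $|\xi_1+\xi_2|=0$. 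So neither of your two tools produces any gain here, and for large $j$ the single Kato bound on $u_1$ falls well short of $s>-\tfrac12$.

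The paper avoids resonance entirely; the point is precisely \emph{not} to discard the weight $\langle\sigma\rangle^{b'}$. In the case $|\xi|\sim|\xi_1|$ the paper uses the dual of the Kato estimate~\eqref{eq:kato2},
\[
\bignorm{J^{\frac{2j-1}{2}-}F}_{X_{0,b'}}\lesssim\|F\|_{L^{1+}_xL^2_t}\qquad(b'>-\tfrac12),
\]
which together with a second application of~\eqref{eq:kato2} on $u_1$ delivers almost $2j-1$ derivatives of smoothing --- enough to close with maximal-function estimates~\eqref{eq:max-fct} on the remaining $2k$ factors. In the complementary case $|\xi|\ll|\xi_1|$ one necessarily has $|\xi_1|\sim|\xi_2|$ and $|\xi_1|\gg|\xi_{2k+1}|$; the paper then applies the bilinear operator $I^{\pm}_{2,j}$ to the pair $(u_1,u_{2k+1})$ --- the highest with the \emph{lowest} factor, not the second highest. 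Since $|\xi_{2k+1}|\ll|\xi_1|$ forces $|\xi_1\pm\xi_{2k+1}|\sim|\xi_1|$ for \emph{both} choices of sign, this pairing always yields the full symbol gain irrespective of the conjugation pattern, and a further Kato estimate~\eqref{eq:kato1} on $u_2$ completes the derivative count.
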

\begin{proof}
    Without loss of generality assume that the frequencies are sorted in descending order of magnitude i.e., $|\xi_1| \ge |\xi_2| \ge \ldots \ge |\xi_{2k+1}|$. We distinguish two cases for the magnitude of the resulting frequency $|\xi|$.
    \begin{enumerate}[wide]
        \item $|\xi| \sim |\xi_1|$. Here we can make proper use of the $-\1{2}+$ derivatives that lie on the product. First we apply the dual form of Kato's smoothing estimate~\eqref{eq:kato2} and redistribute derivatives, introducing $\delta > 0$, in order to at a later point use the maximal function estimate~\eqref{eq:max-fct}. After using Hölder's inequality, we make use of~\eqref{eq:kato2} again (this time literally). Finally we apply the maximal function estimate, where the magnitude of $\delta$ ensures we had previously gained enough derivatives:
        \begin{align*}
        \norm{\prod_{i=1}^{2k+1} \partial_x^{\alpha_i}u_i}_{X_{s,b^\prime}} &\lesssim \norm{J^{\frac{2j-1}{2}-}(J^{2(j-k) + s - \frac{2j-1}{2} + \delta +} u_1) \prod_{i=2}^{2k+1} J^{-\frac{\delta}{2k}} u_i}_{X_{0,b^\prime}}\\
        &\lesssim \norm{(J^{2(j-k)+s-\frac{2j-1}{2} + \delta +} u_1) \prod_{i=2}^{2k+1} J^{-\frac{\delta}{2k}} u_i}_{L^{1+}_x L^2_t}\\
        &\lesssim \norm{J^{\frac{2j+1}{2} - 2k + s + \delta +} u_1}_{L^\infty_x L^2_t} \prod_{i=2}^{2k+1} \norm{J^{-\frac{\delta}{2k}} u_i}_{L^{2k(1+\eps)}_x L^\infty_t}\\
        &\lesssim \norm{u_1}_{X_{s,b}} \prod_{i=2}^{2k+1} \norm{J^{\1{2} - \1{2k(1+\eps)} - \frac{\delta}{2k}} u_i}_{X_{0,b}} \lesssim \prod_{i=1}^{2k+1} \norm{u_i}_{X_{s,b}}
        \end{align*}
        This holds as long as $\delta + 1 < 2k$ and $\1{2} - \1{2k(1+\eps)} - \frac{\delta}{2k} < s = -\1{2}+$, which can be achieved by choosing $\eps > 0$ sufficiently small.

        \item $|\xi| \ll |\xi_1|$. In this case we argue there must be at least one factor that also has large frequency magnitude compared to $\xi_1$, since $|\xi|$ is small. Thus we know $|\xi_1| \sim |\xi_2|$. Though there must also be another factor with comparatively small frequency magnitude, because if all frequencies had comparable magnitude the resulting frequency $\xi$ must also be large since we have an uneven number of factors. Hence also $|\xi_1| \gg |\xi_{2k+1}|$. We now argue
        \begin{align*}
        \norm{\prod_{i=1}^{2k+1} \partial_x^{\alpha_i}u_i}_{X_{s,b^\prime}} &\lesssim \norm{(J^{j-1} u_1) (J^{-\1{2}} u_{2k+1}) (J^{j-1} u_2) \prod_{i=3}^{2k} J^{-1 + \1{4(k-1)}} u_i}_{X_{s,b^\prime}}\\
        &\lesssim \norm{I^\pm_{2,j}(J^{-\1{2}} u_1, J^{-\1{2}} u_{2k+1}) (J^{j-1} u_2) \prod_{i=3}^{2k} J^{-1 + \1{4(k-1)}} u_i}_{L^{1+}_{xt}},
        \end{align*}
        where we used the Sobolev embedding theorem and may freely make use of the bilinear operator $I^+_{2,j}$ since $|\xi_1 \pm \xi_{2k+1}| \sim |\xi_1|$. Next, setting $r = 2(k-1)(2+\eps)$, we use Hölder's inequality
        \begin{align*}
        &\lesssim \norm{I^+_{2,j}(J^{-\1{2}} u_1, J^{-\1{2}} u_{2k+1})}_{L^2_{xt}} \norm{J^{j-1} u_2}_{L^\infty_x L^{2+}_t} \prod_{i=3}^{2k} \norm{J^{-1 + \1{4(k-1)}} u_i}_{L^r_x L^\infty_t}
        \end{align*}
        For the first factor we used the bilinear estimate~\eqref{eq:bilin-est}, for the second the interpolated Kato's smoothing~\eqref{eq:kato2} and for the rest the maximal function estimate~\eqref{eq:max-fct}, in order to arrive at our desired bound.

        For the latter estimate to lead us into the correct $X_{s,b}$-space we need
        \begin{equation*}
        -1 + \1{4(k-1)} + \1{2} - \1{4} = -\1{2} + \1{4(k-1)} - \1{2(k-1)(2+\eps)} < s = -\1{2}+
        \end{equation*}
        which can be achieved by choosing $\eps > 0$ small enough.

        In both cases every factor passes through a norm that is invariant under complex conjugation, or we have the freedom to use $I^-_{2,j}$ over $I^+_{2,j}$, so fulfilling the additional claim that an arbitrary number of the factors can be complex conjugated is also dealt with.\qedhere
    \end{enumerate}
\end{proof}

Unfortunately, when transitioning to Fourier-Lebesgue spaces, one loses the freedom to choose arbitrarily the number of factors in the nonlinearity that may be complex conjugates of the solution $u$.

\begin{proposition}\label{prop:quintic-estimate-hat}
    Let $2 \le k \le j$ and $\alpha \in \N_0^{2k+1}$ with $|\alpha| = 2(j-k)$. Then there exists an $r_0 > 1$ such that for all $1 < r < r_0$ and $s > \frac{j-k}{kr^\prime}$ there exists a $b\prime > -\1{r^\prime}$ such that for all $b > \1{r}$ with $b' + 1 > b$ one has
    \begin{equation}
    \norm{\prod_{i=1}^{2k+1} \partial_x^{\alpha_i}v_i}_{\hat X_{s,b^\prime}^r} \lesssim \prod_{i=1}^{2k+1} \norm{u_i}_{\hat X_{s,b}^r},
    \end{equation}
    where exactly $k$ of $v_1,v_2,\ldots,v_{2k+1}$ are equal to the complex conjugate of $u_i$ and otherwise just equal to $u_i$.
\end{proposition}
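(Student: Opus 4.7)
The approach mirrors the case analysis in Proposition~\ref{prop:quintic-estimate-sobolev}, but with the $L^2$-based tools (Kato smoothing, maximal function) traded for their Fourier--Lebesgue analogues from Section~\ref{sec:linear-est}: the bilinear estimates~\eqref{eq:bilin-est} and~\eqref{eq:bilin-dual}, the Fefferman--Stein inequality~\eqref{eq:fs-est}, and the trilinear refinement~\eqref{eq:trilin-est}. As a first reduction I would pick $b'\in(-1/r',0]$, so that $\widehat{L^r_{xt}}\hookrightarrow\hat X_{0,b'}^r$ is trivial, and then perform a Littlewood--Paley decomposition with $N_1\ge\cdots\ge N_{2k+1}$, absorbing the output weight $\langle\xi\rangle^s$ onto the highest-frequency factor.

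The proof would split into three frequency regimes exactly as in Proposition~\ref{prop:quintic-estimate-sobolev}. In the low-frequency case $N_1\lesssim 1$, derivatives are harmless and H\"older combined with~\eqref{eq:fs-est} suffices. In the non-resonant case $N_1\gg N_{2k+1}$, I would pair $v_1$ with a factor of the opposite conjugation type -- which is always available because the split into $k$ conjugates versus $k+1$ non-conjugates ensures both types occur among $\{v_2,\dots,v_{2k+1}\}$ -- and apply $I^\pm_{p,j}$ with its dual exactly as in the proof of Proposition~\ref{prop:cubic-hat-estimate} to extract the $|\alpha|+s$ derivatives off $v_1$; the remaining $2k-1$ factors would then go into $\widehat{L^{p_i}_{xt}}$-norms via~\eqref{eq:fs-est} and H\"older. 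In the resonant case $N_1\sim\cdots\sim N_{2k+1}$, I would select three factors in the $(u,v,\overline w)$ pattern (possible because $k\ge 2$ provides at least one conjugate and $k+1\ge 3$ at least two non-conjugates), apply~\eqref{eq:trilin-est} to gain $s_0+2s_1=2(j-1)/r$ derivatives on those three, and place the remaining $2k-2$ factors into $\widehat{L^\infty_{xt}}$ via the Sobolev-type embedding $\hat X_{1/r+,\,1/r+}^r\hookrightarrow\widehat{L^\infty_{xt}}$, finally combining factors via the Young-type bound $\|fg\|_{\widehat{L^r_{xt}}}\le\|f\|_{\widehat{L^r_{xt}}}\|g\|_{\widehat{L^\infty_{xt}}}$.

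The hardest part will be the tight derivative bookkeeping in the resonant case, where all frequencies have common size $N$. With each $u_i$ normalized to unit $\hat X_{s,b}^r$ norm, the trilinear contributes a favorable $N^{-2(j-1)/r}$, each of the $2k-2$ Sobolev embeddings costs $N^{1/r}$, the derivatives $\partial_x^{\alpha_i}$ contribute $N^{|\alpha|}=N^{2(j-k)}$ altogether, and the output weight contributes $N^s$; accounting for the $(2k+1)s$ units of regularity supplied by the right-hand side gives an overall exponent of $s+2(j-k)-(2k+1)s-2(j-k)/r = 2(j-k)/r' - 2ks$, and non-positivity is exactly the claimed threshold $s\ge(j-k)/(kr')$ (using $1-1/r=1/r'$ and $2(j-1)-(2k-2)=2(j-k)$). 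The admissible $r_0$ is determined by the joint range of validity of the supporting estimates; since~\eqref{eq:trilin-est} holds for $1<r\le 2$ any $r_0\in(1,2]$ is acceptable, and the strict inequality $s>(j-k)/(kr')$ produces the slack needed to choose $b'>-1/r'$ and $b>1/r$ with $b'+1>b$.
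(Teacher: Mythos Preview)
Your case decomposition differs from the paper's and, while the resonant branch is fine, the non-resonant branch has a real gap.

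The paper does not split into resonant versus non-resonant. Instead it distinguishes by how many factors carry frequency comparable to $|\xi_1|$: when there are at least four high-frequency factors it drops to $L^r_{xt}$ via Hausdorff--Young and applies the Fefferman--Stein estimate~\eqref{eq:fs-est} to those four (with the rest in $L^\infty$); only when there are three or fewer high-frequency factors does it invoke the bilinear/dual-bilinear machinery. Your fully resonant case, using the trilinear refinement~\eqref{eq:trilin-est} on three factors and placing the remaining $2k-2$ in $\widehat{L^\infty_{xt}}$, is a legitimate alternative to the paper's Fefferman--Stein argument in the ``many high frequencies'' regime, and your derivative count $2(j-k)/r'-2ks\le 0$ is correct.

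The gap is in your non-resonant case. Choosing the bilinear partner of $v_1$ by \emph{conjugation type} alone does not make the symbol of $I^\pm_{r,j}$ large: if the chosen partner also has frequency of size $N_1$ then $|\xi_1\pm\xi_2|$ can be $\ll N_1$, and you lose the $(2j-1)/r$ derivative gain you are counting on. What one actually needs is to pair $v_1$ with a \emph{low-frequency} factor (which exists since $N_1\gg N_{2k+1}$), selecting $I^+$ or $I^-$ according to whatever conjugation that factor happens to carry; the paper does exactly this. A second subtlety you omit is the dual bilinear step: the symbol of $I^{\pm,*}_{\rho',j}$ depends on the output frequency of the remaining product, which can be small when two or three high-frequency factors are present and partially cancel. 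The paper flags this in Remark~\ref{rem:hat-space-complex-conj-proof} and handles it by varying the partner in the dual application between a low-frequency factor and one of the surviving high-frequency ones. Your sketch, with its fixed pairing rule and the remaining $2k-1$ factors sent uniformly to Fefferman--Stein, does not cover these intermediate configurations.
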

\begin{proof}
    We assume, without loss of generality, that the magnitudes of the frequencies are sorted i.e., $|\xi_1| \ge |\xi_2| \ge \ldots |\xi_{2k+1}|$. Distinguish cases based on the number of high-frequency factors that are present in the product:
    \begin{enumerate}[wide]
        \item $|\xi_4| \gtrsim |\xi_1|$. So we have at least four high-frequency factors which is enough for us to make use of the Fefferman-Stein estimate~\eqref{eq:fs-est}. We start by choosing $r_0 > 1$ such that $s < \1{r}$. Next fix $s_1 > \1{4}(2(j-k) + s + (2k-3)(\1{r} - s))$ and $s_2 < s - \1{r} < 0$ fulfilling $4s_1 + (2k-3)s_2 = 2(j-k)+s$. Then we can estimate using the Hausdorff-Young inequality
        \begin{align*}
        \norm{\prod_{i=1}^{2k+1} \partial_x^{\alpha_i}v_i}_{\hat X_{s,b^\prime}^r} &\lesssim \norm{\prod_{i=1}^{4} J^{s_1}v_i \prod_{i=5}^{2k+1} J^{s_2}v_i}_{\widehat{L^r_{xt}}} \lesssim \norm{\prod_{i=1}^{4} J^{s_1}v_i \prod_{i=5}^{2k+1} J^{s_2}v_i}_{L^r_{xt}}\\
        &\lesssim \prod_{i=1}^{4} \norm{J^{s_1}u_i}_{L^{4r}_{xt}} \prod_{i=5}^{2k+1} \norm{J^{s_2}u_i}_{L^\infty_{xt}}
        \end{align*}
        For every factor in the second product we can now use $\norm{f}_{L^\infty_{xt}} \lesssim \norm{f}_{\widehat{L^\infty_{xt}}}$ followed by a Sobolev style embedding, where we end up with $s_2 + \1{r} - \1{\infty} +$ space- and $\1{r}+$ time-derivatives. The first four factors can be dealt with by the diagonal case of the Fefferman-Stein inequality~\eqref{eq:fs-est}. So that we end up in the correct $\hat X_{s,b}^r$-norm we need $s > s_1 + \frac{1 - 2(j-1)}{4r}$, which we can achieve for every $s > \frac{j-k}{kr^\prime}$ (by choosing $s_1$ near enough $\1{4}(2(j-k) + s + (2k-3)(\1{r} - s))$) as claimed.

        \item $|\xi| \sim |\xi_1| \gg |\xi_2|$. With only a single high-frequency factor $v_i$ we must distinguish if it is a complex conjugate or not. Without loss of generality we assume $v_1 = \overline{u_1}$ and that (since we know exactly $k$ of the factor are complex conjugates) we are dealing with a product of the form $\overline{u_1} (\prod_{i = 2}^{2k-3}v_i) u_{2k-2} u_{2k-1} u_{2k} \overline{u_{2k+1}}$ (omitting the derivatives). The arguments for the alternate cases is similar, we omit the details. Having only $|\xi_1|$ large gives us control over the symbols of $I^{-,*}_{\rho^\prime,j}$ and $I^+_{r,j}$ when applied as in
        \begin{align*}
        &\norm{\prod_{i=1}^{2k+1} \partial_x^{\alpha_i}v_i}_{\hat X_{0,b^\prime}^r}\\
        &\lesssim \norm{I^+_{r,j}(J^{2(j-k) - \frac{2j-1}{r}}\overline{u_1}, u_{2k})\overline{u_{2k+1}} \prod_{i=2}^{2k-1} v_i }_{\hat X_{0,b^\prime}^r}\\
        &\lesssim \norm{I^{-,*}_{\rho^\prime,j}(I^+_{r,j}(J^{2(j-k) - \frac{2j-1}{r} - \frac{2j-1}{\rho^\prime}}\overline{u_1}, u_{2k})\prod_{i=2}^{2k-1} v_i, \overline{u_{2k+1}}) }_{\hat X_{0,b^\prime}^r}\\
        &\lesssim \norm{I^{-,*}_{\rho^\prime,j}(I^+_{r,j}(J^{\frac{2(j-k)}{r^\prime} - \frac{2(j-1)}{\rho^\prime}}\overline{u_1}, u_{2k})\prod_{i=2}^{2k-1} J^{-\1{r} -}v_i, J^{-\1{r} + \1{\rho^\prime} -}\overline{u_{2k+1}}) }_{\hat X_{0,b^\prime}^r}
        \intertext{Now choosing $\rho \sim r$ such that $\frac{2(j-k)}{r^\prime} - \frac{2(j-1)}{\rho^\prime} \le 0$ we get for a $b^\prime < - \1{\rho^\prime}$}
        &\lesssim \norm{I^+_{r,j}(\overline{u_1}, u_{2k})\prod_{i=2}^{2k-1} J^{-\1{r} -}v_i}_{\widehat{L^r_{xt}}} \norm{J^{-\1{r} + \1{\rho^\prime} -} u_{2k+1}}_{\hat X_{0,-b^\prime}^{\rho^\prime}}\\
        &\lesssim \norm{I^+_{r,j}(\overline{u_1}, u_{2k})}_{\widehat{L^r_{xt}}} \prod_{i=2}^{2k-1} \norm{J^{-\1{r} -}v_i}_{\widehat{L^\infty_{xt}}} \norm{J^{-\1{r} + \1{\rho^\prime} -} u_{2k+1}}_{\hat X_{0,-b^\prime}^{\rho^\prime}}
        \end{align*}
        Using a the bilinear estimate~\eqref{eq:bilin-est}, a Sobolev style embedding and Young's inequality we arrive at the desired upper bound, at least in the case $s = 0$.

        \item $|\xi_1| \sim |\xi_2| \gg |\xi_3|$ or $|\xi_1| \sim |\xi_3| \gg |\xi_4|$.
        \subparagraph{subcase: $v_1 = u_1$ and $v_2 = u_2$} If there are two or three high-frequency factors we proceed similarly as to the case where there is only a single one, though parenthesizing differently with the bilinear operators. Here further cases can be made depending on if the high-frequency factors are complex conjugates or not, though these are remedied by using $I^-_{r,j}$ rather than $I^+_{r,j}$ and vice versa (dito for the dual operators). The arguments are very similar to the preceding cases, so we omit the details.

        We proved the inequality for $s = 0$  in the latter two cases, thus it also holds for every $s \ge 0$.
    \end{enumerate}
\end{proof}

\begin{remark}\label{rem:hat-space-complex-conj-proof}
    Let us discuss what influence the distribution of complex conjugates has on the estimate proven in Proposition~\ref{prop:quintic-estimate-hat}.
    In the first case, where we have `enough', that is four or more, high-frequency factors, whether the terms in the nonlinearity are complex conjugates or not is irrelevant.
    Inspecting the proof for the subsequent cases, where there are three or fewer high-frequency factors, we point out that $2k-2$ of the factors pass through a $\widehat{L^{\infty}_{xt}}$ norm and thus, if these are complex conjugates or not is irrelevant.

    Also in these cases, since $u_1$ is a high-frequency factor and $u_{2k}$ has low frequency, which of the symbols of either bilinear operators $I_{r,j}^\pm$ we gain does not matter. Hence we are not restricted in the sense that the `partner' of $u_1$ in the application of $I_{r,j}^\pm$ has a complex conjugate or not. (This is also independent of whether $u_1$ is a complex conjugate, because $I_{r,j}^\pm$ passed through a $\widehat{L^{r}_{xt}}$ norm.)

    What would remain to argue is why one also has free choice to apply either of the dual bilinear operators $I_{\rho', j}^{\pm, *}$ and hence again, that if the `partner' ($u_{2k+1}$ in the argument given in the proof) is a complex conjugate or not, is irrelevant. This is slightly more delicate and one must vary the `partner' in application of the dual bilinear operator between $u_{2k+1}$ and one of the other high-frequency factors, if the total frequency of $I^+_{r,j}(v_1, v_{2k})\prod_{i=2}^{2k-1}v_i$ (ignoring derivatives) is small. (This product having small frequency can only happen in case there are multiple (but fewer than four) high-frequency factors.) In such a case the symbol of, say, $I_{\rho', j}^{+, *}$ is small and one can thus not fully exploit the gain in derivative this operator would offer. To remedy this one can swap out $u_{2k+1}$ with one of the high-frequency factors besides $u_1$ to ensure the symbol of both bilinear operators is large again.

    We deem adding such a case by case analysis to the proof of Proposition~\ref{prop:quintic-estimate-hat} would distract from the overall argument, so we leave working out further details to the reader.
\end{remark}

Finally we may use multilinear interpolation to interpolate between the estimates in Propositions~\ref{prop:quintic-estimate-sobolev} and~\ref{prop:quintic-estimate-hat} in order to establish the corollary from which Theorem~\ref{thm:wp-hat} follows.

\begin{corollary}\label{cor:wp-hat-estimate}
    Let $2 \le k \le j$ and $\alpha \in \N_0^{2k+1}$ with $|\alpha| = 2(j-k)$. Then for $1 < r \le 2$ and $s > -\frac{1}{r'}$ there exists a $b' > -\frac{1}{r'}$ such that for all $b > \frac{1}{r}$ we have
    \begin{equation}
    \norm{\prod_{i=1}^{2k+1} \partial_x^{\alpha_i}v_i}_{\hat X_{s,b^\prime}^r} \lesssim \prod_{i=1}^{2k+1} \norm{u_i}_{\hat X_{s,b}^r},
    \end{equation}
    where exactly $k$ of $v_1, v_2, \ldots, v_{2k+1}$ are equal to the complex conjugate of $u_i$ and otherwise just equal to $u_i$.
\end{corollary}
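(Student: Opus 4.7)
The corollary follows by complex multilinear interpolation between the two preceding propositions, applied to the same $(2k+1)$-linear operator
\[
M(u_1, \ldots, u_{2k+1}) = \prod_{i=1}^{2k+1} \partial_x^{\alpha_i} v_i,
\]
with the prescribed conjugation pattern (exactly $k$ complex conjugates). Proposition~\ref{prop:quintic-estimate-hat} covers this pattern directly, while Proposition~\ref{prop:quintic-estimate-sobolev} does so via its ``arbitrary subset'' clause. By the complex interpolation identity for $\hat X_{s,b}^r$ spaces recalled in Section~\ref{sec:notation}, boundedness $(X_{s_0, b_0})^{2k+1} \to X_{s_0, b_0'}$ together with boundedness $(\hat X_{s_1, b_1}^{r_1})^{2k+1} \to \hat X_{s_1, b_1'}^{r_1}$ yields boundedness in the interpolated spaces with parameters $1/r = (1-\theta)/2 + \theta/r_1$, $s = (1-\theta) s_0 + \theta s_1$, $b = (1-\theta) b_0 + \theta b_1$ and $b' = (1-\theta) b_0' + \theta b_1'$, for any $\theta \in [0,1]$.

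Now fix $1 < r \le 2$ and $s > -1/r'$. For $r = 2$ the claim is Proposition~\ref{prop:quintic-estimate-sobolev}, so assume $r < 2$. The heart of the argument is to pick $r_1 \in (1, r_0)$, with $r_0$ supplied by Proposition~\ref{prop:quintic-estimate-hat}, close enough to $1$ that the desired $s$ lies above the infimum of achievable interpolated regularities. With $\theta = (2-r) r_1/((2 - r_1) r)$ determined by the Lebesgue exponent relation, a short computation using $1/r' = (1-\theta)/2 + \theta/r_1'$ gives
\[
(1 - \theta)\bigl(-\tfrac{1}{2}\bigr) + \theta \tfrac{j - k}{k r_1'} = -\tfrac{1}{r'} + \tfrac{\theta j}{k r_1'},
\]
and the residual $\theta/r_1' = (2-r)(r_1-1)/((2-r_1) r)$ tends to $0$ as $r_1 \to 1^+$. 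Hence by choosing $r_1$ sufficiently close to $1$, the gap $\theta j/(k r_1')$ can be made smaller than $s + 1/r'$, and we may select admissible endpoint regularities $s_0 > -1/2$ and $s_1 > (j-k)/(k r_1')$ with $(1-\theta) s_0 + \theta s_1 = s$.

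For the Bourgain exponents, given any target $b > 1/r$ we write $b = (1-\theta) b_0 + \theta b_1$ with $b_0 > 1/2$ and $b_1 > 1/r_1$, which is possible since $1/r = (1-\theta)/2 + \theta/r_1$. Propositions~\ref{prop:quintic-estimate-sobolev} and~\ref{prop:quintic-estimate-hat} then supply $b_0' > -1/2$ and $b_1' > -1/r_1'$ with $b_0' + 1 > b_0$ and $b_1' + 1 > b_1$, and the interpolated $b' = (1-\theta) b_0' + \theta b_1'$ automatically satisfies $b' > -1/r'$, matching all parameters. The main obstacle is really just this quantitative matching at the two endpoints; the multilinear complex interpolation theorem itself is standard for the $\hat X^r_{s,b}$ scale, thanks to the interpolation identities listed in Section~\ref{sec:notation}.
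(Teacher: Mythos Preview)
Your proposal is correct and follows exactly the paper's approach: multilinear complex interpolation between Proposition~\ref{prop:quintic-estimate-sobolev} (the $r=2$ endpoint) and Proposition~\ref{prop:quintic-estimate-hat} (the $r$ near $1$ endpoint), using the interpolation identities for $\hat X_{s,b}^r$ spaces from Section~\ref{sec:notation}. The paper merely states that interpolation between the two propositions yields the corollary, so your explicit parameter-matching computation (in particular showing that the interpolated regularity threshold $-\tfrac{1}{r'}+\tfrac{\theta j}{k r_1'}$ can be pushed arbitrarily close to $-\tfrac{1}{r'}$ by taking $r_1\to 1^+$) is more detailed than what the paper provides.
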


\subsection{Multilinear estimates in $X_{s,b}^p$ spaces}

Before we dive into the proofs that will lead to Proposition~\ref{prop:cubic-mod-estimate} and Corollary~\ref{cor:wp-modulation-estimate}, which in turn imply Theorem~\ref{thm:wp-modulation}, we would like to give the reader a run down of extra conventions we will be using when dealing with estimates of frequency localised functions. As in the previous section, we will be proving our estimates separately for different frequency constellations on a case by case basis.

Let us first mention that, even though we are in modulation spaces, we will not need the added control the associated uniform frequency localisation may give us. In particular we will only rely on this additional control in the resonant case for the cubic nonlinear term. For all other cases a more common dyadic frequency decomposition will suffice, which we may sum to arrive in the correct modulation space using, for example,~\eqref{eq:bernstein}.

Furthermore, in order to save vertical space and give a more compact presentation of our estimates, we will play loose with the description of the set over which we will be summing in some cases. Implicitly it is understood that we are always summing over all dyadic frequencies $N, N_1, N_2, \ldots$ or integer frequencies $n, n_1, n_2,\ldots$ that appear in the expression we want to estimate, subject to the restrictions implied by the case we are currently estimating. An example of the suppression of information in a sum, would be the following two sums being equivalent
\begin{equation*}
    \sum_{N_1 \gtrsim N_3} \int_{\R^2} u_{N_1} \overline{u_{N_2}} u_{N_3} \overline{v_{N}} \d{x}\d{t} = \sum_{\substack{N, N_1, N_2, N_3 \ge 1 \\ N_1 \gtrsim N_3}} \int_{\R^2} P_{N_1}u_1 P_{N_2}\overline{u_2} P_{N_3}u_3 P_N\overline{v} \d{x}\d{t},
\end{equation*}
where additional we have made clear the convention mentioned in Section~\ref{sec:notation} that indices denoting frequency decomposition may suppress other indices.

We also introduce the notation $\xi_{max}$, $\xi_{min}$ and $N_{max}$, $N_{min}$ referring to the largest and smallest element of the sets of all frequencies $\set{|\xi_i| \mid 1 \le i \le 2k+1 }$ and of all dyadic frequencies $\set{N_i \mid 1 \le i \le 2k+1 }$, where $2k+1$ is the total number of factors in a nonlinear term.

One last ingredient: the following lemma will help us piece together uniform-frequency localized functions. It had previously appeared in~\cite{OhWang2021}*{eq.~(2.7)}, without proof, but we include its proof here for the reader's convenience.
\begin{lemma}\label{summation-lemma}
    Let $(a_m)_{m\in\Z}$ and $(b_n)_{n\in\Z}$ be two sequences. Then for $1 \le p \le \infty$ and every $\eps > 0$ one has
    \begin{equation*}
    \sum_{\substack{m, n \in \Z\\m \not= n}} \frac{a_m b_n}{|m-n| \JBX[n]^\eps} \lesssim_\eps \norm{a_m}_{\ell_m^p(\Z)} \norm{b_n}_{\ell_n^{p'}(\Z)}.
    \end{equation*}
\end{lemma}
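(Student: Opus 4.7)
The strategy is to split the double sum at the cutoff $|m-n| \sim \langle n \rangle$ and exploit the weight $\langle n \rangle^{-\eps}$ in opposite directions on each side.

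On the near-diagonal side $|m-n| < \langle n\rangle$ the weight is dominated by $|m-n|^{-\eps}$, giving the pointwise bound
\[
\frac{1}{|m-n|\langle n\rangle^\eps} \le \frac{1}{|m-n|^{1+\eps}}.
\]
After the substitution $k = m-n$, I would sum first in $n$ via Hölder's inequality (producing the factor $\|a\|_{\ell^p}\|b\|_{\ell^{p'}}$) and then in $k$ using the absolute convergence of $\sum_{k\neq 0}|k|^{-1-\eps}$. This handles the near-diagonal contribution with a bound of the desired form.

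On the far-field side $|m-n| \ge \langle n\rangle$ the decay $|m-n|^{-1}$ alone already provides summability in $m$. Applying Hölder in $m$ for fixed $n$ and computing the tail
\[
\Bigl(\sum_{|k|\ge \langle n\rangle}|k|^{-p'}\Bigr)^{1/p'} \lesssim \langle n\rangle^{1/p'-1}
\]
yields $\sum_{|m-n|\ge\langle n\rangle}\frac{|a_m|}{|m-n|}\lesssim \|a\|_{\ell^p}\langle n\rangle^{1/p'-1}$. Multiplying by $|b_n|\langle n\rangle^{-\eps}$ and applying Hölder once more in $n$ gives a bound
\[
\lesssim \|a\|_{\ell^p}\|b\|_{\ell^{p'}}\Bigl(\sum_n\langle n\rangle^{-1-\eps p}\Bigr)^{1/p},
\]
which is finite as soon as $\eps > 0$.

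The main delicacy is the book-keeping of the two applications of Hölder so that the two resulting exponents — $p'-1$ from the $k$-tail, and $\eps p$ from the final $n$-sum — combine into a scalar series that converges; this is precisely why the hypothesis $\eps>0$ is essential and why a mere logarithmic weight in place of $\langle n\rangle^{-\eps}$ would not suffice. The argument naturally delivers the estimate for the range $1 \le p < \infty$, which is the range actually used in the subsequent applications.
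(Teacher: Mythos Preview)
Your argument is correct and takes a genuinely different route from the paper. The paper does not split at $|m-n|\sim\langle n\rangle$; instead it writes the double sum as $\sum_m a_m\bigl(\tfrac{b_\cdot}{\langle\cdot\rangle^\eps}*\tfrac{\chi_{\cdot\neq0}}{|\cdot|}\bigr)(m)$, applies H\"older once, then Young's convolution inequality with carefully chosen exponents $1+\tfrac1{p'}=\tfrac1q+\tfrac1r$, and finally H\"older again to peel off $\langle n\rangle^{-\eps}$ from $b_n$. Your near-diagonal/far-field decomposition is more elementary in that it uses only H\"older and avoids Young entirely, at the cost of a case split; the paper's proof is a single chain of inequalities but requires juggling three auxiliary exponents. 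Your observation about $p=\infty$ is in fact sharp: taking $a_m\equiv1$ and $b_n=\delta_{n,0}$ shows the left-hand side diverges, so the endpoint $p=\infty$ is genuinely excluded (and the paper's proof also breaks there, since Young would force $q<1$). The applications in the paper only use $p<\infty$, so nothing is lost.
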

\begin{proof}
    We apply Hölder's inequality and Young's convolution inequality
    \begin{gather*}
    \sum_{\substack{m, n \in \Z\\m \not= n}} \frac{a_m b_n}{|m-n| \JBX[n]^\eps} = \sum_{m \in \Z} a_m \sum_{n \in \Z} \frac{b_n}{\JBX[n]^\eps} \cdot \frac{\chi_{m\not=n}}{|m-n|}\\
    \lesssim \norm{a_m}_{\ell_m^p} \bignorm{\frac{b_\cdot}{\JBX[\cdot]^\eps} * \frac{\chi_{\cdot \not= 0}}{|\cdot|}}_{\ell^{p'}} \lesssim \norm{a_m}_{\ell_m^p} \norm{b_n\JBX[n]^{-\eps}}_{\ell_n^{q}} \norm{\chi_{n \not= 0}|n|^{-1}}_{\ell_n^r}\\
    \lesssim \norm{a_m}_{\ell_m^p} \norm{b_n}_{\ell_n^{p'}} \norm{\JBX[n]^{-\eps}}_{\ell_n^{\tilde{q}}} \norm{\chi_{n \not= 0}|n|^{-1}}_{\ell_n^r} \lesssim_\eps \norm{a_m}_{\ell_m^p} \norm{b_n}_{\ell_n^{p'}},
    \end{gather*}
    where $1 + \1{p'} = \1{q} + \1{r}$ and $\1{q} = \1{p'} + \1{\tilde{q}}$. The last inequality becomes true, if we choose $\tilde{\eps} > 0$ small enough and then set $\1{r} = \1{1 + \tilde{\eps}}$, as well as $\1{\tilde{q}} = \frac{\tilde{\eps}}{1 + \tilde{\eps}}$.
\end{proof}

\subsubsection{Estimates for cubic nonlinearities}

In the proof of the following Proposition~\ref{prop:cubic-mod-estimate} we assume $s = \frac{j-1}{2}$, though because of the inequality $\JBX[\xi] \lesssim \JBX[\xi_1]\JBX[\xi_2]\JBX[\xi_3]$ for $\xi = \xi_1 + \xi_2 + \xi_3$ the derived estimate also holds true for $s > \frac{j-1}{2}$.

\begin{proposition}\label{prop:cubic-mod-estimate}
    Let $j \ge 2$, $2 \le p < \infty$, $s = \frac{j-1}{2}$, $\alpha \in \N_0^3$ with $|\alpha| = 2(j-1)$. Then there exist $b' < 0$ and $b' + 1 > b > \1{2}$ such that one has
    \begin{equation*}
    \norm{\partial_x^{\alpha_1}u_1 \partial_x^{\alpha_2}\overline{u_2} \partial_x^{\alpha_3}u_3}_{X_{s,b^\prime}^p} \lesssim \prod_{i=1}^{3} \norm{u_i}_{X_{s,b}^p}.
    \end{equation*}
\end{proposition}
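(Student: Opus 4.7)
I plan to split the proof into cases based on the relative sizes of the spatial frequencies $|\xi_1|, |\xi_2|, |\xi_3|$, paralleling the structure of the proof of Proposition~\ref{prop:cubic-hat-estimate}. The low-frequency regime $|\xi_{\max}| \lesssim 1$ is trivial by Hölder's inequality together with the embedding $X_{0,b} \hookrightarrow L^\infty_t L^2_x$ (valid for $b > \1{2}$), since derivatives produce no loss. In the non-resonant regime where one frequency strictly dominates, a standard dyadic Littlewood--Paley decomposition combined with the bilinear refinement~\eqref{eq:bilin-est} (and its dual form~\eqref{eq:dual-bilin-free-params}) at $r_1 = r_2 = p = q = 2$ suffices: the symbol bound $k^\pm_j(\xi_1, \xi_i)^{\1{2}} \gtrsim |\xi_{\max}|^{j - \1{2}}$ more than absorbs the $2(j-1)$ derivatives in the nonlinearity and the output weight $\JBX[\xi]^s$, after which~\eqref{eq:bernstein} handles the passage from dyadic pieces to the uniform decomposition required by the modulation-space norm.

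The heart of the matter is the resonant regime $|\xi_1| \sim |\xi_2| \sim |\xi_3| \sim N \gtrsim 1$, in which the dyadic decomposition provides no gain and one must work directly with the uniform operators $\Box_{n_i}$. Writing $u_i = \sum_{n_i} \Box_{n_i} u_i$, the convolution structure forces the constraint $n = n_1 - n_2 + n_3 + O(1)$ on the non-vanishing contributions to $\Box_n(u_1 \overline{u_2} u_3)$. Placing one factor, say $u_{3, n_3}$, in $L^\infty_{xt}$ (controlled by $\|u_{3, n_3}\|_{X_{0, b}}$ thanks to the $O(1)$ frequency support of $\Box_{n_3}$) and applying~\eqref{eq:bilin-est} to the pair $(u_{1, n_1}, \overline{u_{2, n_2}})$ via $I^+_{2, j}$ yields the gain $|n_1 - n_2|^{-\1{2}} N^{-(j-1)}$. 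After accounting for the $N^{2(j-1)}$ loss from the derivatives and the $N^{(j-1)/2}$ weight from $\JBX[n]^s$, and using the constraint $n_1 - n_2 = n - n_3 + O(1)$, one arrives at the pointwise bound
\begin{equation*}
    \JBX[n]^s \|\Box_n(\partial_x^{\alpha_1} u_{1, n_1} \overline{\partial_x^{\alpha_2} u_{2, n_2}} \partial_x^{\alpha_3} u_{3, n_3})\|_{X_{0, b'}} \lesssim |n - n_3|^{-\1{2}} \prod_{i=1}^{3} \JBX[n_i]^s \|u_{i, n_i}\|_{X_{0, b}}.
\end{equation*}

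The main obstacle is the $\ell^p_n$-summation of this trilinear expression when $p > 2$, for which a direct application of Young's convolution inequality is insufficient. I would handle this by dualizing: testing against $w \in X^{p'}_{-s, -b'}$ and applying a second bilinear refinement to the pair $(u_{3, n_3}, w_n)$ via the dual form~\eqref{eq:bilin-dual}, which crucially does not require the modulation exponent on $w$ to exceed $\1{2}$. This produces an additional factor of $|n - n_3|^{-\1{2}}$, combining with the first to yield the weight $|n - n_3|^{-1}$ needed to apply Lemma~\ref{summation-lemma}; the small $\JBX[n]^\eps$ loss in that lemma is absorbed into the slack afforded by the condition $b' + 1 > b$. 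The borderline sub-case $n_3 = n$ (equivalently $n_1 = n_2$) is treated separately by using $I^-_{2, j}$ in place of $I^+_{2, j}$, which gains $|n_1 + n_2|^{\1{2}} \sim N^{\1{2}}$ instead and hence recovers the desired $N^{-\1{2}}$-scale smoothing.
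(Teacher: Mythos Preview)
Your overall architecture matches the paper's: low-frequency trivially, non-resonant via two bilinear refinements on the pairs $(u_1,\overline{u_2})$ and $(u_3,\overline{v})$ after dualizing, and then a careful uniform-decomposition argument with Lemma~\ref{summation-lemma} in the resonant regime. Two points deserve comment.

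First, the device you invoke to allow the test function $w$ to sit at modulation exponent $-b' < \tfrac12$ is~\eqref{eq:bilin-dual}, whereas the paper instead interpolates the bilinear bound~\eqref{eq:bilin-est} with the elementary estimate
\[
\norm{I^+_{2,j}(u,\overline v)}_{L^2_{xt}} \lesssim \norm{u}_{X_{\sigma,\frac12+}}\norm{v}_{X_{\sigma+\frac12+,0}},\qquad \sigma=2j-1,
\]
so that one factor carries \emph{no} modulation weight. Your route via~\eqref{eq:bilin-dual} is not obviously wrong, but as written it is vague: \eqref{eq:bilin-dual} bounds an $\hat X^r_{0,\beta}$-norm of a product, not an $L^2$-norm of the pair $u_3\overline{w}$, so it is not a drop-in replacement for the second bilinear factor after your Cauchy--Schwarz step. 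The paper's interpolation is the cleaner mechanism here.

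Second, and more seriously, your treatment of the borderline sub-case $n_1=n_2$ via $I^-_{2,j}$ does not work. The estimate~\eqref{eq:bilin-est} for $I^-_{2,j}(u,v_-)$ is proved with $v_-=v$ \emph{unconjugated}; applying it to $(u_1,\overline{u_2})$ would bound by $\norm{\overline{u_2}}_{X_{0,b}}$, and since the phase $\xi^{2j}$ is even, $\norm{\overline{u_2}}_{X_{0,b}}=\norm{u_2}_{X_{0,b}^{-\phi}}\neq\norm{u_2}_{X_{0,b}}$. Equivalently, for the pair $(u_1,\overline{u_2})$ the relevant resonance function is $\xi_1^{2j}-\xi_2^{2j}$, whose $x$-derivative after the substitution $\xi_{1,2}=\tfrac{\xi}{2}\pm x$ is $\gtrsim |\xi_1+\xi_2|(\ldots)$ but \emph{not} $\gtrsim |\xi_1-\xi_2|(\ldots)$; there is no $I^-$-type gain available on this pair. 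The paper handles this regime by a finer sub-case split based on the signs of the $\xi_i$: when all have the same sign the $I^+$ symbols are automatically $\sim N$; when $|\xi_1+\xi_2|\lesssim 1$ and additionally $|\xi_2\pm\xi_3|\lesssim 1$ (the fully diagonal case) the trilinear estimate~\eqref{eq:trilin-est} is used instead of any bilinear refinement; and in the remaining sub-cases one re-pairs and applies Lemma~\ref{summation-lemma}. Your proposal is missing this trilinear ingredient for the diagonal interaction.
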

\begin{proof}
    Again, the proof is a case by case analysis of different frequency interactions. We prove the estimate in each case by duality:
    \begin{enumerate}[wide]
        \item \textbf{Low frequency case} $|N_{max}| \lesssim 1$: In this case we may deduce that the frequency of the product $N$ is also small. So we use Hölder's inequality, Sobolev embeddings and~\eqref{eq:bernstein} for the sum
        \begin{align*}
        &\sum_{N_{max}, N \lesssim 1} \int_{\R^2} \partial_x^{\alpha_1}u_{N_1} \partial_x^{\alpha_2}\overline{u_{N_2}} \partial_x^{\alpha_3}u_{N_3} N^s \overline{v_N} \d{x}\d{t}\\
        \lesssim &\sum_{N_{max}, N \lesssim 1} N_{max}^{s + 2(j-1)+1+} \norm{u_{N_1}}_{L^2_{xt}} \norm{u_{N_2}}_{L^\infty_{t}L^2_x} \norm{u_{N_3}}_{L^\infty_{t}L^2_x} \norm{v_{N}}_{L^2_{xt}}\\
        \lesssim &\sum_{N_{max}, N \lesssim 1} N_{max}^{s + 2(j-1)+1+\frac{3}{2}-\frac{3}{p}+} \norm{v_{N}}_{X_{0,-b'}^{p'}} \prod_{i=1}^3 \norm{u_{N_i}}_{X_{s,b}^p}
        \end{align*}
        This is a finite sum (remember, our dyadic frequencies are $N_i \in 2^\N$), so we may bound the final expression by our desired $\norm{v}_{X_{0,-b'}^{p'}} \prod_{i=1}^{3} \norm{u_i}_{X_{s,b}^p}$.

        \item \textbf{Non-/Semi-resonant interaction} $N_{max} \gg N_{min}$: Here there are two subcases to be dealt with, depending on which frequencies are of similar magnitude to $N_{max}$, but with opposite sign, if any. The arguments in both cases are the same (just with the roles of some of the factors interchanged), so we will only present one of the cases.

        Say we have $|\xi_{max}| = |\xi_1| \gg |\xi_3| = |\xi_{min}|$. Then either $|\xi_1 + \xi_2| \sim |\xi_1|$ or $|\xi_1 + \xi| \sim |\xi_1|$. In the former case, both $|\xi_1 + \xi_2|$ and $|\xi_3 + \xi|$ are comparable to $|\xi_{max}|$ and in the latter it is both $|\xi_1 + \xi|$ and $|\xi_2 + \xi_3|$ that are comparable. For other choices of $\xi_{max}$ and $\xi_{min}$ one may argue similarly.

        Observe the argument for the case with $|\xi_{max}| = |\xi_1| \gg |\xi_3| = |\xi_{min}|$ and $|\xi_1 + \xi_2| \sim |\xi_1|$: first we use Hölder's inequality
        \begin{align}
        &\sum_{N_1 \gg N_3} \int_{\R^2} \partial_x^{\alpha_1}u_{N_1} \partial_x^{\alpha_2}\overline{u_{N_2}} \partial_x^{\alpha_3}u_{N_3} N^s \overline{v_N} \d{x}\d{t}\\
        \label{eq:pre-bilin-interpolation} \lesssim &\sum_{N_1 \gg N_3} N_{max}^{s + 2(j-1)} \norm{u_{N_1} \overline{u_{N_2}}}_{L^2_{xt}} \norm{u_{N_3} \overline{v_{N}}}_{L^2_{xt}}
        \end{align}
        Next we would like to apply our bilinear estimate~\eqref{eq:bilin-est} with $q = p = 2$ to both terms in the $L^2$ norm. Though because we are estimating by duality simply using~\eqref{eq:bilin-est} as-is would leave us with $v_N$ in the wrong space $X_{0, b}$ for $b > \1{2}$. To remedy this we interpolate~\eqref{eq:bilin-est} with the much simpler bound
        \begin{align}
        \norm{I^+_{2,j}(u, \overline{v})}_{L^2_{xt}} &\lesssim \norm{(J^\sigma u)(J^\sigma v)}_{L^2_{xt}}\\ &\lesssim \norm{J^\sigma u}_{L^\infty_t L^2_x} \norm{J^\sigma v}_{L^2_t L^\infty_x} \lesssim \norm{u}_{X_{\sigma, \1{2} +}} \norm{v}_{X_{\sigma + \1{2} +, 0}}, \label{eq:simple-bilin}
        \end{align}
        where $\sigma = 2j-1$ and we used Hölder's inequality and Sobolev embeddings.
        Using our interpolated bound we may proceed with estimating~\eqref{eq:pre-bilin-interpolation}:
        \begin{align*}
        \lesssim &\sum_{N_1 \gg N_3} N_{max}^{s - 1} \norm{I_{2,j}^+(u_{N_1}, \overline{u_{N_2}})}_{L^2_{xt}} \norm{I_{2,j}^+(u_{N_3} \overline{v_{N}})}_{L^2_{xt}}\\
        \lesssim &\sum_{N_1 \gg N_3} N_{max}^{s - 1 +} (N_1N_2N_3)^{-s+} \norm{v_N}_{X_{0,-b'}^{p'}} \prod_{i=1}^3 N_i^{0-} \norm{u_{N_i}}_{X_{s,b}}\\
        \lesssim &\sum_{N_1 \gg N_3} N_1^{- \1{2} - \1{p}+} (N_2N_3)^{-s+\1{2}-\1{p}+} N^{0-}\norm{v_N}_{X_{0,-b'}^{p'}} \prod_{i=1}^3 N_i^{0-} \norm{u_{N_i}}_{X_{s,b}^p}.
        \end{align*}
        At this point it becomes important, that $j \not= 1$, because otherwise $s = 0$ and we wouldn't be able to sum up. For $j \ge 2$ though, one has $s \ge \1{2}$ so that $N_1^{- \1{2} - \1{p}+} (N_2N_3)^{-s+\1{2}-\1{p}+} \lesssim 1$ and we can close our argument with a final application of~\eqref{eq:bernstein}.

        \item \textbf{Resonant interaction} $N_{max} \sim N_{min}$: Here we will have to utilize the added control modulation spaces give us with the unit cube decomposition. We distinguish between the following subcases:
        \begin{enumerate}[label=\arabic*., wide]
            \item $\forall (i, j): |\xi_i + \xi_j| \gtrsim |\xi_i - \xi_j|$: This means that all frequencies have the same sign. Since we have separate control over the symbols $|\xi_i + \xi_j|$ and $|\xi_i - \xi_j|$ we may argue simpler than in~\cite{OhWang2021}. The estimate in this subcase may be proven analogously to the non-/semi-resonant case.
            \item $|\xi_1 - \xi_2| \ge |\xi_1 + \xi_2|$:
            \begin{enumerate}[label*=\arabic*., wide]
                \item $|\xi_1 + \xi_2| \lesssim 1$ and $\min(|\xi_2 + \xi_3|, |\xi_2 - \xi_3|) \lesssim 1$: Without loss of generality we will assume $|\xi_2 - \xi_3| \lesssim 1$, the other case may be argued analogously. So here we have the following frequencies for the individual factors and their product
                \begin{equation*}
                n_1 = -\ell + \mathcal{O}(1), \quad n_2 = \ell, \quad n_3 = \ell + \mathcal{O}(1), \quad n = \ell + \mathcal{O}(1)
                \end{equation*}
                for a fixed $\ell \in \Z$. We may restrict ourselves to proving the diagonal case, where $- n_1 = n_2 = n_3 = n = \ell$ hold exactly. This is because after having established the inequality for the diagonal case, the general case may be proven by switching to a different family of isometric decomposition operators $(\tilde{\Box}_n)_{n\in\Z}$ and using the inequality for the diagonal case. We omit the details.

                After using Hölder's inequality we use our trilinear estimate~\eqref{eq:trilin-est} to bound the contribution in this case:

                \begin{align*}
                &\sum_{\ell\in\Z} \JBX[\ell]^{s + 2(j-1)} \int_{\R^2} u_{-\ell} \overline{u}_{\ell} u_{\ell} \overline{v}_{\ell} \d{x}\d{t} \lesssim \sum_{\ell\in\Z} \JBX[\ell]^{s + 2(j-1)} \norm{u_{-\ell} \overline{u}_{\ell} u_{\ell}}_{L^2_{xt}} \norm{v_{-\ell}}_{L^2_{xt}}\\
                \lesssim &\sum_{\ell\in\Z} \JBX[\ell]^{s + 2(j-1)} \norm{u_{-\ell} \overline{u}_{\ell} u_{\ell}}_{L^2_{xt}} \norm{v_{-\ell}}_{L^2_{xt}} \lesssim \sum_{\ell\in\Z} \norm{u_{-\ell}}_{X_{s,b}}^3 \norm{v_{-\ell}}_{X_{0,-b'}}
                \end{align*}
                Using the trivial embeddings $\ell^2 \supset \ell^{p'}$ and $\ell^{3p} \supset \ell^p$ we arrive at our desired bound:
                \begin{equation*}
                \lesssim \norm{v_{-\ell}}_{X_{0,-b'}^{p'}} \prod_{i=1}^3 \norm{u_i}_{X_{s,b}^{3p}} \lesssim \norm{v}_{X_{0,-b'}^{p'}} \prod_{i=1}^3 \norm{u_i}_{X_{s,b}^{p}}
                \end{equation*}

                \item $|\xi_1 + \xi_2| \lesssim 1$ and $|\xi_2 \pm \xi_3| \gg 1$: In this case we have the following frequencies:
                \begin{equation*}
                n_1 = -\ell + \mathcal{O}(1), \quad n_2 = \ell, \quad n_3 = m + \mathcal{O}(1), \quad n = m + \mathcal{O}(1).
                \end{equation*}
                for fixed $\ell, m \in\Z$. Also we may note, that $|\ell \pm m| \gtrsim 1$, as well as $|m| \sim |\ell|$ because we are in a resonant case. By symmetry we may additionally assume $|m + \ell| \ge |m - \ell|$. Again it suffices to deal with the diagonal case, where $- n_1 = n_2 = \ell$ and $n_3 = n = m$ exactly. As usual we begin with an application of Hölder's inequality:
                \begin{equation*}
                \sum_{\ell,m\in\Z} \JBX[m]^{s + 2(j-1)} \int_{\R^2} u_{-\ell} \overline{u}_{\ell} u_{m} \overline{v}_{m} \d{x}\d{t}
                \lesssim \sum_{\ell,m\in\Z} \JBX[m]^{s + 2(j-1)} \norm{u_{m} \overline{u}_{\ell}}_{L^2_{xt}} \norm{u_{-\ell} \overline{v}_{m}}_{L^2_{xt}}
                \end{equation*}
                Being left in a similar situation to~\eqref{eq:pre-bilin-interpolation}, we argue with the same interpolated inequality (between~\eqref{eq:bilin-est} and~\eqref{eq:simple-bilin}) to arrive at
                \begin{align*}
                \lesssim &\sum_{\ell,m\in\Z} \frac{\JBX[m]^{s + 2(j-1)} \norm{u_m}_{X_{0,b}} \norm{u_{-\ell}}_{X_{0,b}} \norm{u_{-\ell}}_{X_{0,b}} \norm{v_{-m}}_{X_{0,-b'}} }{\JBX[m]^{2j-2-}\sqrt{|m-\ell| \cdot |\ell + m|}}.
                \end{align*}
                Here we may use $|m+\ell| \ge |m-\ell|$ and then apply~\Cref{summation-lemma}, which is again reliant on the fact $s > 0$:
                \begin{align*}
                \lesssim &\sum_{\ell,m\in\Z} \1{|m - \ell| \JBX[\ell]^{2s-}} \norm{u_m}_{X_{s,b}} \norm{u_{-\ell}}_{X_{s,b}} \norm{u_{-\ell}}_{X_{s,b}} \norm{v_{-m}}_{X_{0,-b'}}\\
                \lesssim & \bignorm{\norm{u_{-\ell}}_{X_{s,b}} \norm{u_{-\ell}}_{X_{s,b}}}_{\ell_\ell^{\frac{p}{2}}} \cdot \bignorm{\norm{u_{m}}_{X_{s,b}} \norm{v_{-m}}_{X_{0,-b'}}}_{\ell_m^{\frac{p}{p-2}}} 
                \end{align*}
                Finally for the first factor we utilise Hölder's inequality, for the second we send $\norm{u_m}_{X_{s,b}}$ to $X_{s,b}^\infty$ and then use the embeddings $\ell^\infty \supset \ell^p$ and $\ell^{\frac{p}{p-2}} \supset \ell^{p'}$ to arrive at our desired bound for this case.

                \item $\forall i \not= j: |\xi_i \pm \xi_j| \gg 1$: This subcase starts similarly to the preceding one, where we first apply Hölder's inequality and then our interpolated bilinear estimate (between~\eqref{eq:bilin-est} and~\eqref{eq:simple-bilin}) in order to place $v_n$ in the correct space $X_{0, -b'}$.
                \begin{align*}
                &\sum_{n_1 + n_2 + n_3 = n} |n|^{s + 2(j-1)} \int_{\R^2} u_{n_1} \overline{u}_{n_2} u_{n_3} \overline{v}_n \d{x}\d{t}\\
                \lesssim &\sum_{n_1 + n_2 + n_3 = n} |n|^{s + 2(j-1)} \norm{u_{n_1} \overline{u}_{n_2}}_{L^2_{xt}} \norm{u_{n_3} \overline{v}_n}_{L^2_{xt}}\\
                \lesssim &\sum_{n_1 + n_2 + n_3 = n} \frac{|n|^{s+} \norm{u_{n_1}}_{X_{0,b}} \norm{u_{-n_2}}_{X_{0,b}} \norm{u_{n_3}}_{X_{0,b}} \norm{v_{-n}}_{X_{0,-b'}} }{\sqrt{|n_1 + n_2| \cdot |n_3 + n|}}
                \end{align*}
                Now at least one of $|n_1 + n_2|$ or $|n_3 + n|$ is comparable to $|n|$, so assuming without loss, that $|n_3 + n| \sim |n|$ we may split the factor $|n|^{s+}$ and apply Hölder's inequality:
                \begin{align*}
                \lesssim &\sum_{n_1 + n_2 + n_3 = n} \frac{\norm{u_{n_1}}_{X_{s,b}} \norm{u_{-n_2}}_{X_{s,b}}}{\sqrt{|n_1 + n_2|} |n|^{\1{2}-}} \frac{\norm{u_{n_3}}_{X_{s,b}} \norm{v_{-n}}_{X_{0,-b'}}}{|n|^{j - 1}}\\
                \lesssim &\sup_{n, n_3} \paren{\sum_{n_2} \JBX[n_2]^{-1+} \norm{u_{n_1}}_{X_{s,b}} \norm{u_{-n_2}}_{X_{s,b}}} \cdot \sum_{n, n_3} \frac{\norm{u_{n_3}}_{X_{s,b}} \norm{v_{-n}}_{X_{0,-b'}}}{|n_3 \pm n| \JBX[n]^{0+}}\\
                \lesssim &\norm{v}_{X_{0,-b'}^{p'}} \prod_{i=1}^3 \norm{u_i}_{X_{s,b}^{p}},
                \end{align*}
                where in the final step we used~\Cref{summation-lemma} again.
            \end{enumerate}
            \item $|\xi_2 - \xi_3| \ge |\xi_2 + \xi_3|$: One can deal with this case in the same way as the previous with the roles of $\xi_1$ and $\xi_3$ swapped. \qedhere
        \end{enumerate}
    \end{enumerate}
\end{proof}

\subsubsection{Estimates for quintic and higher-order nonlinearities}

\begin{proposition}\label{prop:quintic-estimate-modulation}
    Let $2 \le k \le j$, $s > \1{4k}$, $\alpha \in \N_0^{2k+1}$ with $|\alpha| = 2(j-k)$. Then there exist $b' < 0$ and $b' + 1 > b > \1{2}$ such that one has
    \begin{equation}
    \norm{\prod_{i=1}^{2k+1} \partial_x^{\alpha_i}u_i}_{X_{s,b^\prime}^\infty} \lesssim \prod_{i=1}^{2k+1} \norm{u_i}_{X_{s,b}^\infty}.
    \end{equation}
    Additionally for an arbitrary subset of the factors on the left hand side these may be replaced with their complex conjugates.
\end{proposition}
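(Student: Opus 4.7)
This proposition is the modulation-space endpoint counterpart of Proposition~\ref{prop:quintic-estimate-sobolev} and provides, together with it, the two interpolation endpoints needed for Corollary~\ref{cor:wp-modulation-estimate}. The strategy is to mimic the proof of Proposition~\ref{prop:quintic-estimate-sobolev} at the level of the uniform frequency cubes $\Box_{n_i}$, compensating for the missing $\ell^2$-orthogonality by summing explicitly. Decomposing $u_i = \sum_{n_i \in \Z} \Box_{n_i} u_i$, the product $\Box_n\bigl(\prod_i \partial_x^{\alpha_i} \Box_{n_i} u_i\bigr)$ is nonzero only when $n = \sum_i n_i + \mathcal{O}(1)$ (each $\Box_{n_i}$ having Fourier support of bounded diameter), so the triangle inequality reduces the claim to
\begin{equation*}
    \sup_{n \in \Z} \JBX[n]^s \sum_{\vec n \,:\, \sum n_i = n + \mathcal{O}(1)} \bignorm{\Box_n \prod_i \partial_x^{\alpha_i} \Box_{n_i} u_i}_{X_{0, b'}} \lesssim \prod_i \norm{u_i}_{X_{s, b}^\infty}.
\end{equation*}

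After sorting $|n_1| \ge \ldots \ge |n_{2k+1}|$, I would split into the coherent case $|n| \sim |n_1|$ and the cancellation case $|n| \ll |n_1|$ (which forces $|n_1| \sim |n_2|$). In the coherent case, apply dual Kato (costing $\JBX[n]^{(2j-1)/2+}$) to reduce $X_{0, b'}$ to a spatial $L^{1+}_x L^2_t$ estimate, then Hölder in $x$ to place $\Box_{n_1} u_1$ into $L^\infty_x L^2_t$ via Kato (yielding $\JBX[n_1]^{-(2j-1)/2}$) and the remaining $2k$ cubes into $L^{2k(1+\eps)}_x L^\infty_t$ via the maximal function (each yielding $\JBX[n_i]^{1/2 - 1/(2k(1+\eps))+}$), exactly as in Proposition~\ref{prop:quintic-estimate-sobolev} Case~1 and with the same auxiliary $\delta > 0$ for redistributing derivatives. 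In the cancellation case, first pair $(\Box_{n_1} u_1, \Box_{n_2} u_2)$ through the bilinear estimate~\eqref{eq:bilin-est} (choosing $I^+_{2, j}$ or $I^-_{2, j}$ according to whether $\Box_{n_2} u_2$ is conjugated) to extract the $\bigl(|n_1 \pm n_2|(\JBX[n_1]^{2j-2} + \JBX[n_2]^{2j-2})\bigr)^{1/2}$ smoothing, and run the remaining $2k-1$ cubes through the same Hölder-plus-linear-estimate machinery. Since $L^\infty_x L^2_t$, $L^{q}_x L^\infty_t$, and the $L^2_{xt}$ bilinear norm are all invariant under complex conjugation, and one may swap $I^+_{2, j}$ for $I^-_{2, j}$ freely, the claim that an arbitrary subset of the factors may be replaced by their complex conjugates is automatic.

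Substituting $\norm{\Box_{n_i} u_i}_{X_{0, b}} \le \JBX[n_i]^{-s} \norm{u_i}_{X_{s, b}^\infty}$ into these pointwise bounds then reduces the proposition to a scalar sum estimate of the form
\begin{equation*}
    \sup_{n \in \Z} \JBX[n]^s \sum_{\vec n \,:\, \sum n_i = n + \mathcal{O}(1)} \prod_i \JBX[n_i]^{-\gamma_i} \lesssim 1,
\end{equation*}
with exponents $\gamma_i$ built from $s$, the $\partial_x^{\alpha_i}$-derivatives, and the contributions from Kato, the maximal function, the bilinear estimate, and the auxiliary $\delta, \eps > 0$. After a careful optimization of $\eps$ and $\delta$, the constrained $(2k)$-dimensional convolution sum is uniformly bounded in $n$ exactly when $s > \1{4k}$.

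\textbf{Main obstacle.} The real difficulty lies in this final scalar sum. In the Sobolev proof of Proposition~\ref{prop:quintic-estimate-sobolev} the analogous summation was absorbed by $\ell^2$-orthogonality via Plancherel, whereas here the $\ell^\infty$-structure of $X_{s, b}^\infty$ forces genuine summation, and the maximal-function loss of essentially $\1{2} - \1{2k}$ per non-Kato factor, weighted against the $s$-derivative gain per factor from the sup-norm and the single convolution constraint $\sum n_i = n + \mathcal{O}(1)$, is precisely what produces the borderline threshold $s > \1{4k}$. All other aspects --- the case split, the Hölder distribution, the bilinear pairing in the cancellation case, and the invariance under complex conjugation --- are routine adaptations of Proposition~\ref{prop:quintic-estimate-sobolev}.
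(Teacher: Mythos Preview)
Your overall architecture---split into a high-output-frequency case and a cancellation case, then feed the factors through Kato smoothing and the maximal function estimate, and finally reduce to a scalar summability condition that pins down $s>\tfrac{1}{4k}$---is correct and matches the paper's strategy. Two differences are worth noting. First, the paper works with a \emph{dyadic} Littlewood--Paley decomposition and argues by duality against a test function $v\in X_{0,-b'}^{1}$, rather than summing uniform cubes directly; the paper remarks explicitly that for the quintic and higher terms the extra resolution of the isometric decomposition is not needed. This is a cosmetic difference: your uniform-cube sum and the paper's dyadic sum with the $N^{1/2}$ Bernstein loss from~\eqref{eq:bernstein} are equivalent bookkeeping.

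The substantive issue is your treatment of the cancellation case $|n|\ll|n_1|$. You propose to pair $(\Box_{n_1}u_1,\Box_{n_2}u_2)$ through the bilinear estimate~\eqref{eq:bilin-est} and extract $\bigl(|n_1\pm n_2|\,\JBX[n_1]^{2j-2}\bigr)^{1/2}$. But once the choice of $I^{+}_{2,j}$ versus $I^{-}_{2,j}$ is fixed by whether $u_2$ is conjugated, the relevant symbol is always the \emph{difference} of the true frequencies of $u_1$ and $u_2$, and with $|n_1|\sim|n_2|$ this can be $\mathcal{O}(1)$ (e.g.\ $n_1=n_2=N$, $n_3=n_4=n_5=-\tfrac{2N}{3}$ with all factors non-conjugated). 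The gain then degenerates from $\JBX[n_1]^{-(2j-1)/2}$ to only $\JBX[n_1]^{-(j-1)}$, which is not enough to close the sum at $s=\tfrac{1}{4k}+$. Note that in Proposition~\ref{prop:quintic-estimate-sobolev} the bilinear operator was applied to $(u_1,u_{2k+1})$, pairing the largest with the \emph{smallest} frequency precisely so that $|\xi_1\pm\xi_{2k+1}|\sim|\xi_1|$; your pairing $(u_1,u_2)$ loses this.

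The paper's own proof avoids the bilinear estimate altogether in the cancellation case: it places $v_N$ in $L^2_xL^{\infty-}_t$ (time-Sobolev), applies Kato~\eqref{eq:kato2} \emph{twice}---once to $u_{N_1}$ and once to $u_{N_2}$, each in $L^\infty_xL^{2+}_t$ for a gain of $N_1^{-(2j-1)/2+}$ apiece---and sends the remaining $2k-1$ factors through the maximal function estimate in $L^{2(2k-1)}_xL^\infty_t$. This double-Kato route is simpler and robust against the degeneracy above; you could also repair your argument by pairing $u_1$ with a factor of opposite-sign frequency (one must exist in the cancellation case by parity of $2k+1$), but as written the cancellation case has a gap.
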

\begin{proof}
    In the proof of this proposition we again assume that the frequencies of the factors in the nonlinearity are ordered in decending order $|\xi_1| \ge |\xi_2| \ge \cdots \ge |\xi_{2k+1}|$. There are essentially two cases to be dealt with, depending on if $\xi_1$ is cancelled out by $\xi_2$ or not. We estimate both cases by duality:

    \begin{enumerate}[wide]
        \item $|\xi_1| \sim |\xi|$: Here $\xi_1$ is not cancelled by $\xi_2$, but the factor corresponding to the product $v_N$ must thus have high frequency. The contribution from this case may be bounded by first using Hölder's inequality
        \begin{align*}
        &\sum_{N \sim N_1} \int_{\R^2} N^s \overline{v}_N N_1^{2(j-k)} \prod_{i=1}^{2k+1} u_{N_i} \d{x}\d{t}\\
        \lesssim &\sum_{N \sim N_1} N_1^{s + 2(j-k)} \norm{v_N}_{L^{\infty-}_xL^2_t} \norm{u_{N_1}}_{L^\infty_xL^2_t} \prod_{i = 2}^{2k+1} \norm{u_{N_i}}_{L^{2k+}_xL^\infty_t}\\
        \intertext{Now we use Kato's inequality~\eqref{eq:kato2} for both $v_N$ and $u_{N_1}$ and the maximal function estimate~\eqref{eq:max-fct} $2k$ times for the remaining $u_{N_i}$.}
        \lesssim &\sum N_1^{s + 1 - 2k+} \norm{v_N}_{X_{0,-b'}} \norm{u_{N_1}}_{X_{0,b}} \prod_{i = 2}^{2k+1} N_i^{\1{2} - \1{2k+}+} \norm{u_{N_i}}_{X_{0,b}}\\
        \lesssim &\paren{\prod_{i = 1}^{2k+1} \norm{u}_{X_{s,b}^\infty}} \norm{v}_{X_{0,-b'}^1} \sum N_1^{\frac{3}{2} - 2k+} \prod_{i = 2}^{2k+1} N_i^{1 - \1{2k} - s +}.
        \end{align*}
        Finally we make use of the embedding $\ell^2 \supset \ell^1$ and~\eqref{eq:bernstein}, where we lose half a derivative using the endpoint estimate. The last term is summable, since we may distribute the $2k - \frac{3}{2}-$ derivatives gain from the first factor and $1 - \1{2k} - (1 - \frac{3}{4k}-) - s+ < 0$ can be achieved for $s > \1{4k}$.
        \item $|\xi_1| \gg |\xi|$: In this case we must have $|\xi_1| \sim |\xi_2|$. To bound this case's contribution we use a Sobolev-embedding for the factor $v_N$ and Kato's inequality~\eqref{eq:kato2} for the two high frequency factors $u_{N_1}$ and $v_N$ after an application of Hölder's inequality.
        \begin{align*}
        &\sum_{N \ll N_1} \int_{\R^2} N^s \overline{v}_N N_1^{2(j-k)} \prod_{i=1}^{2k+1} u_{N_i} \d{x}\d{t}\\
        \lesssim &\sum N_1^{s+2(j-k)} \norm{v_N}_{L^{2}_xL^{\infty-}_t} \norm{u_{N_1}}_{L^\infty_xL^{2+}_t} \norm{u_{N_2}}_{L^\infty_xL^{2+}_t} \prod_{i = 3}^{2k+1} \norm{u_{N_i}}_{L^{2(2k-1)}_xL^\infty_t}\\
        \lesssim &\sum N_1^{s + 1 - 2k +}\norm{v_N}_{X_{0,-b'}} \norm{u_{N_1}}_{X_{0,b}} \norm{u_{N_2}}_{X_{0,b}} \prod_{i = 3}^{2k+1} N_i^{\1{2} - \1{2(2k-1)}+} \norm{u_{N_i}}_{X_{0,b}}\\
        \end{align*}
        For all other factors we applied the maximal function estimate~\eqref{eq:max-fct}. We close this case by~\eqref{eq:bernstein} for the $u_{N_i}$ and using the embedding $\ell^2 \supset \ell^1$ for the factor $v_N$.
        \begin{equation*}
        \lesssim \paren{\prod_{i = 1}^{2k+1} \norm{u}_{X_{s,b}^\infty}} \norm{v}_{X_{0,-b'}^1} \sum N_1^{2 - 2k - s +} \prod_{i = 3}^{2k+1} N_i^{1 - \1{2(2k-1)} - s +}
        \end{equation*}

        The final sums converge, because for every $i = 3,4,\ldots,2k+1$ we have an additional gain of $\frac{2(k-1) + s}{2k-1}-$ derivatives and one can easily check that $$1 - \1{2(2k-1)} - s - \frac{2(k-1) + s}{2k-1}+ < 0 \iff s > \1{4k}.$$
    \end{enumerate}
\end{proof}

Again, as in Corollary~\ref{cor:wp-hat-estimate}, the following corollary is derived from a multilinear interpolation between Proposition~\ref{prop:quintic-estimate-sobolev} and the endpoint estimate in Proposition~\ref{prop:quintic-estimate-modulation} we just proved.

\begin{corollary}\label{cor:wp-modulation-estimate}
    Let $2 \le k \le j$ and $\alpha \in \N_0^{2k+1}$ with $|\alpha| = 2(j-k)$. Then for $2 \le p \le \infty$, $s > \frac{1}{4k} - \frac{2k+1}{2kp}$, and  $b' + 1 > b > \frac{1}{2}$ we have
    \begin{equation}
    \norm{\prod_{i=1}^{2k+1} \partial_x^{\alpha_i}u_i}_{X_{s,b^\prime}^p} \lesssim \prod_{i=1}^{2k+1} \norm{u_i}_{X_{s,b}^p}.
    \end{equation}
    Additionally for an arbitrary subset of the factors on the left hand side these may be replaced with their complex conjugates.
\end{corollary}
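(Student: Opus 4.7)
The strategy is complex multilinear interpolation between the two endpoint estimates already at hand: Proposition~\ref{prop:quintic-estimate-sobolev} (the $p = 2$ case, recalling $X_{s,b}^2 = X_{s,b}$, valid for $s > -\1{2}$) and Proposition~\ref{prop:quintic-estimate-modulation} (the $p = \infty$ case, valid for $s > \1{4k}$). Given $p \in [2, \infty]$, set $\theta = 1 - \frac{2}{p} \in [0, 1]$, so that $\1{p} = \frac{1-\theta}{2} + \frac{\theta}{\infty}$. Choosing $s_0 > -\1{2}$ and $s_1 > \1{4k}$ arbitrarily close to their respective thresholds, a direct computation shows that the interpolated regularity $s = (1-\theta)s_0 + \theta s_1$ exhausts the range $s > \frac{1}{4k} - \frac{2k+1}{2kp}$, since the convex combination of the two threshold values is exactly $\frac{1}{4k} - \frac{2k+1}{2kp}$. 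For the Bourgain exponents one picks admissible pairs $(b_0, b_0')$ and $(b_1, b_1')$ for each of the two propositions (both satisfying $b_i > \1{2}$, $b_i' > -\1{2}$, and $b_i' + 1 > b_i$); convexity then preserves $b > \1{2}$ and $b' + 1 > b$ for the interpolated pair.

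With parameters fixed, apply the complex multilinear interpolation theorem to the $(2k+1)$-linear form $T(u_1, \ldots, u_{2k+1}) = \prod_{i=1}^{2k+1} \partial_x^{\alpha_i} u_i$, relying on the interpolation identity
\[
    \left[ X_{s_0, b_0}^{2}, X_{s_1, b_1}^{\infty} \right]_{[\theta]} = X_{s, b}^{p}
\]
recorded in Section~\ref{sec:notation} (valid because $(p_0, p_1) = (2, \infty) \ne (\infty, \infty)$) on each of the $2k+1$ input slots as well as on the output space $X_{s, b'}^{p}$. Stability under arbitrary complex conjugation of individual factors is inherited from the endpoint bounds, since complex conjugation is an isometric involution on every $X_{s, b}^p$ and both Propositions~\ref{prop:quintic-estimate-sobolev} and~\ref{prop:quintic-estimate-modulation} already allow the conjugation pattern to be chosen freely.

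The one real technicality is the justification of the multilinear Stein scheme in precisely this setting, which one handles by inserting analytic Fourier multipliers of the form $\JBX[\xi]^{\alpha(z)} \JBX[\tau - \varphi(\xi)]^{\beta(z)}$, together with a Gaussian regularising factor $e^{z^2 - \theta^2}$, into each input slot and verifying admissibility on the closed strip $\set{0 \le \Re z \le 1}$. As this machinery is by now standard for Bourgain-type spaces and their $\ell^p$-based modulation variants -- see, e.g., \cites{Grünrock2004, OhWang2021} -- the corollary reduces to routine bookkeeping once the two endpoint estimates are in place.
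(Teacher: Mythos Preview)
Your approach is correct and matches the paper's own: the corollary is obtained by multilinear interpolation between Proposition~\ref{prop:quintic-estimate-sobolev} (the $p=2$ endpoint) and Proposition~\ref{prop:quintic-estimate-modulation} (the $p=\infty$ endpoint), exactly as you describe, and your threshold computation $(1-\theta)(-\tfrac12) + \theta\cdot\tfrac{1}{4k} = \tfrac{1}{4k} - \tfrac{2k+1}{2kp}$ is right. One small slip: complex conjugation is \emph{not} an isometry on $X_{s,b}^p$ here, since the phase $\varphi(\xi) = (-1)^{j+1}\xi^{2j}$ is even (cf.\ the remark at the start of Section~\ref{sec:linear-est}); but this is harmless, as your argument correctly relies instead on the fact that both endpoint propositions already allow an arbitrary conjugation pattern, which one simply fixes before interpolating.
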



\section{Ill-posedness results on $\R$ and $\T$}\label{sec:illposedness}

After now dealing with the positive results regarding the NLS hierarchy in this paper, let us now move focus to negative results. First we will establish Theorems~\ref{thm:illposed-hat} and~\ref{thm:illposedness-modulation}, that shows our Theorems~\ref{thm:wp-hat} and~\ref{thm:wp-modulation} to be optimal in the framework we are using. To do so we first exhibit a family of solutions to equations of type~\eqref{eq:general-nls-hierarchy-eqn}.

\begin{lemma}\label{solution-family}
    For $j \ge 2$ let us choose
    \begin{equation*}
    \delta_0 = \sum_{n=0}^{j} (-1)^{n+1} N^{2(j-n)} \binom{2j}{2n} \quad\text{and}\quad c_0 = \sum_{n=0}^{j-1} (-1)^n N^{2(j-n)-1} \binom{2j}{2n+1}
    \end{equation*}
    and set $u_N(x, t) = \exp(i(Nx + \delta_0 t)) \sech(x-c_0 t)$. Then for every $N > 0$ the function $u_N$ is a solution of a higher-order NLS-like equation~\eqref{eq:nls-like}.
\end{lemma}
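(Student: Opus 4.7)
The plan is to substitute $u_N(x,t) = e^{i\phi(x,t)} f(y)$ with $\phi = Nx + \delta_0 t$, $y = x - c_0 t$ and $f = \sech$ directly into the linear operator and identify what remains. I would first compute $i\partial_t u_N = e^{i\phi}(-\delta_0 f - ic_0 f')$. Then Leibniz's rule applied to the spatial part, followed by separation of even and odd~$\ell$, should give
\begin{equation*}
(-1)^{j+1}\partial_x^{2j} u_N = e^{i\phi}\!\left[\sum_{n=0}^{j}(-1)^{n+1}\binom{2j}{2n}N^{2(j-n)}f^{(2n)}(y) + i\sum_{n=0}^{j-1}(-1)^{n}\binom{2j}{2n+1}N^{2(j-n)-1}f^{(2n+1)}(y)\right].
\end{equation*}

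The decisive structural input will be the pair of identities, provable by induction from $f'' = f - 2f^3$: $f^{(2n)} = f + \tilde P_n(f)$ and $f^{(2n+1)} = f'\bigl(1 + \tilde Q_n(f^2)\bigr)$, with $\tilde P_n$ an odd polynomial in $f$ vanishing to order~$3$ at~$0$, and $\tilde Q_n$ a polynomial vanishing at~$0$. In particular the coefficient of $f$ in every $f^{(2n)}$ and of $f'$ in every $f^{(2n+1)}$ equals~$1$. Using this, the ``linear in $f$'' part of the real sum above equals $\sum_{n=0}^{j}(-1)^{n+1}\binom{2j}{2n}N^{2(j-n)} = \delta_0$, and the choice of $\delta_0$ cancels it exactly against $-\delta_0 f$ from $i\partial_t u_N$; similarly the ``linear in $f'$'' part of the imaginary sum equals $c_0$ and is cancelled against $-ic_0 f'$. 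The remainder takes the form
\begin{equation*}
i\partial_t u_N + (-1)^{j+1}\partial_x^{2j}u_N = e^{i\phi}\!\left[\sum_{m=1}^{j} A_m(N)\, f(y)^{2m+1} + if'(y)\sum_{m=1}^{j-1} B_m(N)\, f(y)^{2m}\right]
\end{equation*}
with explicit real polynomial coefficients $A_m(N), B_m(N)$.

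What remains is to recognise this right-hand side as a (generalised) higher-order NLS-like nonlinearity~\eqref{eq:nls-like} evaluated at $u_N$. The strategy will be a downward induction on the polynomial degree, using as building blocks the expansions $\partial_x^{2(j-k)}u_N\cdot|u_N|^{2k}$ (whose leading term is $e^{i\phi}(-N^2)^{j-k} f^{2k+1}$) together with the difference identity
\begin{equation*}
\partial_x^{2(j-m)} u_N \cdot |u_N|^{2m} - u_N^{m+1}\bar u_N^{m-1}\partial_x^{2(j-m)}\bar u_N = e^{i\phi}\Bigl(2i\,d_{j-m}(N)\,f' f^{2m} + \text{higher-order terms}\Bigr),
\end{equation*}
where $d_p(N) := \operatorname{Im}\bigl((1+iN)^{2p}\bigr) = (1+N^2)^p\sin\bigl(2p\arctan N\bigr)$. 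Each such building block satisfies the constraint $|\alpha| = 2(j-k)$ with $2k+1$ factors; iteratively isolating the highest-power contributions $f^{2j+1}$, $if'f^{2(j-1)}$, $f^{2j-1}$, $\ldots$ and subtracting them off should produce the required real coefficients $c_{k,\alpha,b}$.

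The main hurdle I foresee is handling the finitely many exceptional values $N = \tan\bigl(k\pi/(2p)\bigr)$, $k = 1, \ldots, p-1$, at which $d_p(N)$ vanishes and the naive inversion of the pairing identity fails. For such $N$ one should switch to an alternative distribution of the $2(j-m)$ derivatives among the $2m+1$ factors (for instance, $\partial_x^{2} u_N \cdot \partial_x^{2(j-m)-2}\bar u_N \cdot |u_N|^{2m-2}\cdot u_N$), which yields a distinct trigonometric polynomial in $N$ whose zeros do not coincide with those of $d_p$. Since the space of NLS-like building blocks with $2k+1$ factors and $|\alpha| = 2(j-k)$ is multidimensional for $j \geq 2$, such an alternative is always available, and the finite linear system in the $c_{k,\alpha,b}$ is solvable for every $N > 0$.
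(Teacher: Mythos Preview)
Your treatment of the linear cancellation is essentially identical to the paper's: compute $i\partial_t u_N$ and $\partial_x^{2j}u_N$ via Leibniz, use the structural identities $f^{(2n)}=\sum_{m}c_{n,m}f^{2m+1}$, $f^{(2n+1)}=\sum_m c_{n,m}(2m+1)f^{2m}f'$ with $c_{n,0}=1$, and observe that $\delta_0$ and $c_0$ kill precisely the $f$ and $f'$ contributions. What remains is of the form $e^{i\phi}\bigl(\sum_m A_m f^{2m+1}+i\sum_m B_m f^{2m}f'\bigr)$, exactly as you write.

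The difference is in how you reconstruct this remainder from NLS-like monomials. The paper does not use your difference identity; instead it takes as building blocks the family $(\partial_x^{a}|u|^{2m})\,\partial_x^{2(j-m)-a}u$ for $a=0,1,\ldots,2(j-m)$ at each level $m$. Evaluated at $u_N$, the $a$th block has leading term $c\,(iN)^{2(j-m)-a}\,e^{i\phi}f^{2m+1}$ (for $a$ even) or $c\,(iN)^{2(j-m)-a}\,e^{i\phi}f^{2m}f'$ (for $a$ odd), with nonzero $c$. As $a$ runs from $0$ to $2(j-m)$ the exponent of $N$ strictly decreases, so the change-of-basis matrix between these blocks and the targets $\{N^{2(j-m)}f^{2m+1},\,N^{2(j-m)-1}f^{2m}f',\ldots,f^{2m+1}\}$ is lower triangular with nonzero diagonal. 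Hence the coefficients can be solved for \emph{uniformly in $N$}, and the residual higher-order terms are absorbed at the next level~$m+1$. This is the argument the paper imports from \cite{AGTowers}*{Lemma~8}.

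Your route via the pairing $\partial_x^{2p}u_N\cdot|u_N|^{2m}-u_N^{m+1}\bar u_N^{m-1}\partial_x^{2p}\bar u_N$ is correct in principle (your formula $2i\,d_p(N)$ with $d_p(N)=\operatorname{Im}((1+iN)^{2p})$ does check out), but it manufactures the exceptional-value problem you then have to talk your way around. The assertion that an alternative derivative distribution ``has zeros not coinciding with those of $d_p$'' is plausible but is not argued, and would still leave you with an $N$-dependent choice of equation rather than a single equation valid for all $N>0$. The paper's triangular basis sidesteps this entirely: no division by anything that can vanish, and the resulting coefficients $c_{k,\alpha}$ are polynomials in $N$ independent of any case distinction. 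If you want to salvage your induction cleanly, replace your difference identity at level $m$ by the single block $(\partial_x|u|^{2m})\,\partial_x^{2(j-m)-1}u$, whose leading $e^{i\phi}f^{2m}f'$ coefficient is $2m(iN)^{2(j-m)-1}\neq 0$ for every $N>0$.
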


Before we prove this Lemma, let us note that the one-parameter family $u_{N}$ of solutions will not suffice for our ill-posedness argument. Luckily, due to the scaling invariances of the equations we are looking at, we can extend this family:

\begin{corollary}\label{cor:ip-solution-family}
    The family of solutions in Lemma~\ref{solution-family} can be extended to a two-parameter family $v_{N, \omega}$ of solutions by setting $v_{N,\omega}(x, t) = \omega u_{\frac{N}{\omega}}(\omega x, \omega^{2j} t)$.
\end{corollary}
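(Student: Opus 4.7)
The plan is to invoke the scaling invariance of NLS-like equations noted in Remark~\ref{rem:scaling-Hs}. Since Lemma~\ref{solution-family} produces, for each $M > 0$, a solution $u_M$ of some fixed NLS-like equation~\eqref{eq:nls-like} (the coefficients $c_{k,\alpha,b}$ do not depend on $M$), it suffices to check that the transformation $u \mapsto \omega\, u(\omega \cdot, \omega^{2j}\cdot)$ maps solutions of~\eqref{eq:nls-like} to solutions. Applying this with $u = u_{N/\omega}$ then yields $v_{N,\omega}$.

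First I would compute how derivatives transform under the rescaling. Writing $\tilde u = u_{N/\omega}$ and $v(x,t) = \omega \tilde u(\omega x, \omega^{2j} t)$, for any $\alpha \in \N_0$ one has
\begin{equation*}
\partial_x^\alpha v(x,t) = \omega^{\alpha+1} (\partial_x^\alpha \tilde u)(\omega x, \omega^{2j} t), \qquad \partial_t v(x,t) = \omega^{2j+1} (\partial_t \tilde u)(\omega x, \omega^{2j} t),
\end{equation*}
and analogously for $\overline{v}$. Hence the linear part $i\partial_t v + (-1)^{j+1}\partial_x^{2j} v$ equals $\omega^{2j+1}$ times the corresponding linear expression for $\tilde u$ evaluated at $(\omega x, \omega^{2j}t)$.

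Next I would check that the nonlinearity scales with the same power. A generic monomial in~\eqref{eq:nls-like} has $2k+1$ factors with total derivative count $|\alpha| = 2(j-k)$, so each such term picks up a factor $\omega^{(2k+1)+2(j-k)} = \omega^{2j+1}$ under the substitution. Dividing both sides of the equation satisfied by $\tilde u$ (evaluated at $(\omega x, \omega^{2j} t)$) by $\omega^{2j+1}$ then shows that $v$ satisfies the identical NLS-like equation at $(x,t)$.

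There is essentially no obstacle here: the entire content is the observation that the ansatz order ``one prefactor of $\omega$ plus $2j$ in time'' is the unique one for which the linear dispersion $\partial_x^{2j}$ balances a nonlinearity with $2k+1$ factors and $2(j-k)$ derivatives, for every admissible $k$ simultaneously. The only bookkeeping is to match the exponent $\omega^{2j+1}$ across the linear and all nonlinear terms, which is what the preceding paragraph verifies. This completes the extension of the one-parameter family $\{u_N\}$ to the claimed two-parameter family $\{v_{N,\omega}\}$.
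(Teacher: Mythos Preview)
Your proposal is correct and follows the same approach as the paper: the paper does not give a separate proof of this corollary but simply invokes the scaling invariance of NLS-like equations recorded in Remark~\ref{rem:scaling-Hs}, which is precisely what you spell out in detail by checking that both the linear part and every nonlinear monomial pick up the common factor $\omega^{2j+1}$.
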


\begin{proof}[Proof of Lemma~\ref{solution-family}]
    To simplify notation in the forthcoming proof we will use $f = \sech$. Similarly to the argument in~\cite{AGTowers} we begin with calculating the time derivatives of our supposed solution:
    \begin{equation*}
    i\partial_t u_N(x, t) = \exp(i(Nx + \delta_0 t))(-\delta_0 f - ic_0 f')
    \end{equation*}
    Turning to the space derivatives, a slightly more lengthy calculation yields
    \begin{align*}
    \partial_x^{2j} u_N(x, t) = (-1)^j \exp(i(Nx + \delta_0 t)) &\sum_{m=0}^{j} f^{2m} \sum_{n=m}^{j} (-1)^n c_{n,m} N^{2(j-n)}\cdots\\ &\cdots\left[ \binom{2j}{2n}f - \frac{i}{N} \binom{2j}{2n+1} (2m+1) f' \right],
    \end{align*}
    where we have omitted the arguments to $f$ (which are always equal to $x - c_0 t$) and the coefficients $c_{n,m}$ are taken from the identities
    \begin{equation}\label{eq:sech-identities}
    f^{(2n)}(x) = \sum_{m=0}^{n} c_{n,m} f^{2m+1}(x) \;\;\text{and}\;\; f^{2n+1}(x) = \sum_{m=0}^{n} c_{n,m} (2m+1) f^{2m} f'.
    \end{equation}
    Of these coefficients we will only need to know the exact value $c_{n,0} = 1$. One may easily derive these identities from the well-known fact $f'^2 = f^2 - f^4$ and $f'' = f - 2f^3$.

    Now the parameters $\delta_0$ and $c_0$ were chosen specifically such that the linear part of the equation~\eqref{eq:nls-like} would vanish, so
    \begin{align}\label{eq:nonlinear-rhs}
    (i \partial_t + (-1)^{j+1} \partial_x^{2j})u_N(x, t) = \exp(i(Nx + \delta_0 t)) \left( -\sum_{m=1}^{j} f^{2m} \Sigma_m \right),
    \end{align}
    where we set
    \begin{equation*}
    \Sigma_m = \sum_{n=m}^j (-1)^n c_{n,m} N^{2(j-n)} \left[ \binom{2j}{2n} f - \frac{i}{N} \binom{2j}{2n+1} (2m+1)f' \right]
    \end{equation*}
    for readability.

    What is left to argue now, is that the right-hand side of~\eqref{eq:nonlinear-rhs} can in fact be expressed by inserting our supposed solution $u_N$ into a nonlinear term, that is part of the family described by~\eqref{eq:nls-like}.

    Though this can be achieved by the same argument that is used at the end of the proof of~\cite{AGTowers}*{Lemma~8}. We merely give the two tables of (nonlinear) terms appearing in the double sum~\eqref{eq:nonlinear-rhs}. The rest of the details are left to the reader.

    In~\eqref{eq:nonlinear-rhs} one may notice ``that the last term is missing'', i.~e. there are only $2(j-m)+1$ terms per line, for a total of $j^2$ in the whole table (as opposed to $(j+1)^2$ terms in~\cite{AGTowers}):
    \begin{equation*}
    \begin{matrix}
    & n=1           & n=2               & n=3           & n = 4             & \cdots & n = j    \\
    m=1    & N^{2(j-1)}f^3 & N^{2(j-1)-1}f^2f' & N^{2(j-2)}f^3 & N^{2(j-2)-1}f^2f' & \cdots & f^3      \\
    m=2    &               &                   & N^{2(j-2)}f^5 & N^{2(j-2)-1}f^4f' & \cdots & f^5      \\
    \vdots &               &                   &               &                   & \ddots & \vdots   \\
    m = j  &               &                   &               &                   &        & f^{2m+1}
    \end{matrix}
    \end{equation*}
    Finally the nonlinear terms of the resulting equation that $u_N$ will solve is given:
    \noindent
    \makebox[\textwidth]{\parbox{1.15\textwidth}{%
    \begin{equation*}
    \begin{matrix}
    |u|^2 \partial_x^{2(j-1)} u & (\partial_x |u|^2) \partial_x^{2(j-1)-1} u & (\partial_x^2 |u|^2) \partial_x^{2(j-2)} u & (\partial_x^3 |u|^2) \partial_x^{2(j-2)-1} u & \cdots & (\partial_x^{2(j-1)} |u|^2) u \\
    &                                            & |u|^4 \partial_x^{2(j-2)} u                & (\partial_x |u|^4) \partial_x^{2(j-2)-1} u   & \cdots & (\partial_x^{2(j-2)} |u|^4) u \\
    &                                            &                                            &                                              & \ddots & \vdots                        \\
    &                                            &                                            &                                              &        & |u|^{2j} u
    \end{matrix}
    \end{equation*}
    }}
    Note that these align with the expectation of the equation $u_N$ solves belonging to the family described in~\eqref{eq:nls-like}.
\end{proof}

Now with knowledge of our family of solutions from Corollary~\ref{cor:ip-solution-family} we may reuse an argument given in~\cite{AGTowers}*{Proposition~1}, based upon~\cite{KPV2001}, in order to prove Theorem~\ref{thm:illposed-hat}.

\begin{proof}[Proof of Theorem~\ref{thm:illposed-hat}]
    The same argument as given in~\cite{AGTowers}*{Proposition~1} works here, just that one has to modify the choices made at the start of the proof. We choose $N_1, N_2 \sim N$ but fulfilling $|N_1 - N_2| = \frac{C}{T} N^{sr' - 2(j-1)}$ for a constant $C > 0$. (We keep $N \to \infty$ and $\omega = N^{-sr'}$.)

    When checking the details the astute reader should note, that we have the bound $-\1{r'} < s < \frac{j-1}{r'}$ on the regularity of the data and the propagation speed of a solution is of the order of $N_k^{2j-1}$ (instead of $N_k^{2j}$), for $k = 1, 2$.
\end{proof}

Though our family of solutions is not just useful for proving ill-posedness in Fourier-Lebesgue spaces. We may reuse it again for the proof of Theorem~\ref{thm:illposedness-modulation}. We adapt an argument from~\cite{OhWang2021}*{Lemma~4.1}, which is also based on~\cite{KPV2001}, to our situation.

\begin{proof}[Proof of Theorem~\ref{thm:illposedness-modulation}]
    The proof of this theorem is similar in spirit to that of Theorem~\ref{thm:illposed-hat}, only that one has to be more careful in estimating the difference of solutions at a time $T > 0$. This is due to the fact, that the argument relies on the separation of (essential) support of two solutions in physical space, but this ``conflicts'' with the isometric decomposition used in the definition of modulation spaces.

    Let us begin by stating some parameter choices that we will use down the line. Since $s < \frac{j-1}{2}$ we can fix a $\theta > 0$ such that $4s - 2(j-1) + 2\theta < 0$. Let $N \gg 1$ and $N_1, N_2 \sim N$ but fulfilling the separation condition $|N_1 - N_2| = \frac{C}{T} N^{2s - 2(j-1) + 2\theta}$ for a positive time $T > 0$ and constant $C > 0$. Finally let $\omega = N^{-2s}$. Later we will look at the limiting behaviour $N \to \infty$.

    The next step is establishing bounds on our family of solutions in modulation spaces. We reuse the same arguments as in~\cite{OhWang2021}*{eqns.~(4.7) through~(4.10)} establishing
    $\norm{v_{N_k, \omega}(\cdot, t)}_{M^s_{2,p}} \sim 1$ uniformly in $t \in\R$ and $N, N_1, N_2 \ge 1$.

    For the bound on the difference of solutions at time $t = 0$, we may use the embedding $M^s_{s,p} \supset H^s$ and~\cite{KPV2001}*{eqn.~(3.5)} to estimate
    \begin{equation*}
        \norm{v_{N_1, \omega}(\cdot, 0) - v_{N_2, \omega}(\cdot, 0)}_{M^s_{2,p}} \lesssim N^{2s}|N_1 - N_2| \sim T^{-1}N^{4s - 2(j-1) + 2\theta},
    \end{equation*}
    which converges to zero, for $N \to \infty$.

    Next up is bounding the difference of solutions at a positive time $T > 0$. This is the point where an extra argument is necessary in the modulation space setting. One resorts to looking at frequency contributions to the norm in the vicinity of $N$; in $|\xi - N| \ll N^\theta$ to be precise.

    Noting our increased propagation speed of the solutions, we may argue analogously to~\cite{OhWang2021}*{eqn.~(4.12)} and establish
    \begin{equation}\label{eq:ip-mod-bound}
        |\langle \Box_n v_{N_1, \omega}(\cdot, T), \Box_n v_{N_2, \omega}(\cdot, T) \rangle| \lesssim \1{N^{2(j-1)} |N_1 - N_2|T} \lesssim T^{-1}N^{-2s-2\theta},
    \end{equation}
    which we now utilise in said bound on the difference of solutions at $T > 0$. Following along the lines of~\cite{OhWang2021}*{eqn.~(4.14)}, but using our new bound~\eqref{eq:ip-mod-bound}, we may establish
    \begin{equation*}
        \norm{v_{N_1, \omega}(\cdot, T) - v_{N_2, \omega}(\cdot, T)}_{M^s_{2,p}} \gtrsim 1 - T^{-1}N^{\frac{2}{p}\theta+2s}N^{-2\theta - 2s} = 1 - T^{-1} N^{-2\theta \1{p'}}.
    \end{equation*}
    Letting $N \to \infty$ we have thus established the theorem.
\end{proof}

As mentioned above in the discussion of results in the introduction, the equations leading to ill-posedness on $\R$ are not in general the NLS hierarchy equations. This is of course reflected in the statement of Theorem~\ref{thm:illposed-hat}.

For the interested reader though we give the family of fourth-order equations ($j = 2$) for which a solution was constructed in Lemma~\ref{solution-family}. Let $\lambda \in\R$, then the solution for $j=2$ that was constructed in Lemma~\ref{solution-family} solves the equations
\begin{align}\label{eq:illposedness-equation-example}
i\partial_t u - \partial_x^4 u = &\lambda |u|^2\partial_x^2u + (44-3\lambda)u^2\partial_x^2\overline{u} + (6\lambda - 80)|\partial_x^2 u|u \\&+ (56-4\lambda)(\partial_x u)^2\overline{u} + (40-2\lambda)|u|^4 u .\nonumber
\end{align}

Next we may deal with the Propositions leading to forms of ill-posedness on the torus $\T$, i.e. Theorems~\ref{thm:illposedness-torus-C3} and~\ref{thm:illposedness-torus-C0}.

\begin{proposition}
    The flow $S: \hat H^s_r(\T) \times (-T, T) \to \hat H^s_r(\T)$ of the fourth-order equation ($j = 2$) in the NLS hierarchy
    \begin{equation*}
    iu_t - \partial_x^4 u = -2u^2 \partial_x^2 \overline{u} - 8|u|^2 \partial_x^2 u - 4 |\partial_x u|^2 u - 6 (\partial_x u)^2 \overline{u} + 6 |u|^4 u
    \end{equation*}
    cannot be $C^3$ for any $1 \le r \le \infty$ and $s \in \R$.
\end{proposition}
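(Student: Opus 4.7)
The approach follows the standard Bourgain-style obstruction for $C^3$-regularity of the flow map, cf.~\cite{KPV2001}. If $S$ were globally $C^3$, then in particular its third Fréchet derivative at $u_0 = 0$ would be a continuous symmetric trilinear form $\hat H^s_r(\T)^3 \to \hat H^s_r(\T)$. Iterating Duhamel's formula and matching orders in $u_0$ identifies this derivative (up to a constant factor) with the third-order Picard iterate
\begin{equation*}
    u^{(3)}(t) = -i \int_0^t e^{-i(t-t')\partial_x^4} F_3\paren{u^{(1)}(t')} \d{t'},
\end{equation*}
where $u^{(1)}(t) = e^{-it\partial_x^4} u_0$ is the free evolution and $F_3$ collects the four cubic terms on the right-hand side of the stated equation; the quintic $6\abs{u}^4 u$ enters only higher Taylor coefficients. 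Continuity of the trilinear form would thus force a uniform bound $\norm{u^{(3)}(T)}_{\hat H^s_r(\T)} \lesssim \norm{u_0}_{\hat H^s_r(\T)}^3$ for each fixed $T > 0$, and my plan is to refute exactly this bound.

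Consider the two-mode data $u_0(x) = \eps\paren{N^{-s} e^{iNx} + e^{ix}}$, for which $\norm{u_0}_{\hat H^s_r(\T)} \sim \eps$ uniformly in $N \in \N$ and $r \in [1, \infty]$. The free evolution is a sum of two plane waves, and $F_3(u^{(1)})$ expands as a finite sum of plane waves indexed by triples $(\xi_1, \xi_2, \xi_3) \in \set{N, 1}^3$ together with a choice of conjugation slot, each weighted by $-\xi_j^2$ or $i\xi_j$ arising from the spatial derivatives in $F_3$. I focus on the output mode $\xi = N$: the contributing triples are $(N, N, N)$ and those permutations of $(N, 1, 1)$ for which the low-frequency pair cancels in the convolution constraint (one is conjugated, the other not). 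In every such configuration the Duhamel resonance phase $\xi^4 - \paren{\pm\xi_1^4 \mp \xi_2^4 \pm \xi_3^4}$ vanishes identically, so the $t'$-integral contributes an exact factor of $T$, uncompensated by any oscillation.

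Summing the four cubic terms weighted by the integer coefficients $-2, -8, -4, -6$ produces, after an elementary tally, the amplitude-per-unit-time
\begin{equation*}
\paren{12 N^2 \abs{\alpha}^2 + \paren{8 N^2 + 8N + 8} \abs{\beta}^2} \alpha
\end{equation*}
for the $\xi = N$ mode of $u^{(3)}$, with $\alpha = \eps N^{-s}$ and $\beta = \eps$. The leading-in-$N$ coefficient of the $\abs{\beta}^2 \alpha$-term does not cancel, which is the essential non-triviality of the computation. Consequently this single mode alone contributes
\begin{equation*}
    \norm{u^{(3)}(T)}_{\hat H^s_r(\T)} \gtrsim \JBX[N]^s \cdot T \cdot N^2 \abs{\beta}^2 \abs{\alpha} \sim T \eps^3 N^2,
\end{equation*}
whereas trilinear boundedness would demand this to be $\lesssim T\eps^3$ uniformly in $N$. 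Sending $N \to \infty$ for any fixed $T > 0$ yields the contradiction, independently of $s \in \R$ and $r \in [1, \infty]$.

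The main obstacle is the signed coefficient count for the $\abs{\beta}^2 \alpha$-term: one must verify that the contributions from the four cubic terms combine to a non-zero multiple of $N^2$, rather than conspiring to cancel. This is an elementary calculation analogous to the single-mode tally $12N^2 + 6$ appearing in the proof of Lemma~\ref{solution-family}; the additional book-keeping is to track the three permutations of the low-frequency pair and to record, for each of the four cubic terms, which slot carries the $\partial_x$- or $\partial_x^2$-derivative. Note also that single-mode data $u_0 = \eps N^{-s} e^{iNx}$ alone would only produce blowup for $s < 1$ (from the $12 N^2 \abs{\alpha}^2 \alpha$ contribution), so the admixture of the low-frequency $e^{ix}$-mode is essential for covering the whole range $s \in \R$. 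Once the coefficient calculation is secured, the remainder of the argument is simply the standard matching of Fourier supports together with the familiar transfer of linear-in-$T$ Duhamel growth into a failure of trilinear boundedness.
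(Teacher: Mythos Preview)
Your proposal is correct and follows essentially the same argument as the paper: the same Bourgain-style third Picard iterate, the same two-mode data $\hat\phi$ supported at $\{N, N_0\}$ (the paper allows general $N_0 \ll N$ but notes the reader may take $N_0 = 1$, as you do), the same three resonant triples at output frequency $N$, and the same conclusion that the symbol contributes on the order of $N^2$ uncompensated by oscillation. Your explicit amplitude coefficients differ from the paper's symbol $n_3(k_1,k_2,k_3) = (k_1+k_2)^2 + \tfrac{3}{2}(k_1+k_3)^2$ by an overall factor of two, which is a harmless normalisation discrepancy.
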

\begin{proof}
    Following an argument by Bourgain~\cite{Bourgain1997}, assume the flow is indeed thrice continuously differentiable. For a datum $u_0(x) = \delta \phi(x)$, where $\delta > 0$ and $\phi \in H^s(\T)$ for any $s \in \R$ are to be chosen later, we will evaluate the third derivative of the flow at the origin. So let $u$ denote the corresponding solution to $u_0$, then
    \begin{equation*}
    \left. \frac{\partial^3 u}{\partial \delta^3} \right|_{\delta = 0} \sim \int_0^t U(t-t') N_3(U(t')u_0) dt',
    \end{equation*}
    where we have used the notation $N_3(u)$ to denote solely the cubic terms of the nonlinearity and $U(t)$ the linear propagator of the equation.

    We may now write the integrand as its Fourier series to arrive at
    \begin{align*}
    &= \int_0^t \sum_{\substack{k \in \Z\\k_1+k_2+k_3 = k}} e^{ikx} e^{i(t-t')k^4} e^{it'(k_1^4 - k_2^4 + k_3^4)} \hat{\phi}(k_1) \overline{\hat{\phi}(-k_2)} \hat{\phi}(k_3) n_3(k_1, k_2, k_3) dt'\\
    &= \sum_{\substack{k \in \Z\\k_1+k_2+k_3 = k}} e^{ikx + itk^4} \hat{\phi}(k_1) \overline{\hat{\phi}(-k_2)} \hat{\phi}(k_3) n_3(k_1, k_2, k_3) \int_0^t e^{-it'(k^4 - k_1^4 + k_2^4 - k_3^4)} dt'.
    \end{align*}
    Here $n_3(k_1, k_2, k_3) = (k_1 + k_2)^2 + \frac{3}{2} (k_1 + k_3)^2$ is the symbol corresponding to the terms in $N_3$. We may now choose $\hat{\phi}(k) = k^{-s}(\delta_{k, N} + \delta_{k, N_0})$, where $N_0 \ll N$. The choice of the $N_0$ parameter is not important as long as it is, say, fixed. For all further calculations the reader may assume $N_0=1$. We then observe $\norm{\phi}_{\hat H^s_r} \sim 1$ independent of the two parameters.

    Inserting this into the above expression we note that it suffices to look at the terms that produce a resulting frequency of $k = N$. There are three such choices for the tuple $(k_1, k_2, k_3)$, namely $(N, -N, N)$, $(N, -N_0, N_0)$ and $(N_0, -N_0, N)$. Note that for each of these three choices the resonance relation $k^4 - k_1^4 + k_2^4 - k_3^4$ cancels and the integral in the formula above is equal to $t$ and the symbol of our nonlinearity has size on the order of $N^2$.

    These frequency choices thus produce Fourier coefficients (at frequency $N$) on the order of $tN^{2-3s}$ (for the first one) and $tN^{2-s}$ (for the second and third). The remaining five frequency constellations cannot cancel these contributions as they are of lower order in $N$.

    This leaves us with the following lower bound for the Sobolev norm of the operator that is the derivative of the flow:
    \begin{equation*}
    \bignorm{\left. \frac{\partial^3 u}{\partial \delta^3} \right|_{\delta = 0}}_{\hat H^s_r}^{r'} \gtrsim N^{sr'} \cdot t^{r'} N^{(2-s)r'}(1 + N^{-2sr'}) \ge t^{r'}N^{2r'}
    \end{equation*}
    for $1 < r \le \infty$. If $r = 1$ we still have a lower bound of $tN^{2}$ though with a simpler argument.
    Letting $N \to \infty$ we can now see, that the flow cannot be $C^3$ for any $s \in \R$.
\end{proof}

The previous proposition shows that an approach with (just) a fixed-point theorem to prove well-posedness must fail at any regularity in $\hat H^s_r(\T)$. As is stated in Theorem~\ref{thm:illposedness-torus-C0} the situation is much more dire at lower regularities. Its proof lies in the following proposition.

\begin{proposition}\label{prop:ip-torus-low-regularity}
    Let $j\in\N$, $1 \le r \le \infty$ and $s < j - 1$. The flow $S : \hat H^s_r(\T) \times (-T, T) \to \hat H^s_r(\T)$ of the Cauchy problem
    \begin{equation}\label{eq:ip-torus-low-regularity}
    i\partial_t u + (-1)^{j+1}\partial_x^{2j} u = |u|^2 \partial_x^{2j-2} u \quad\text{with}\quad u(t = 0) = u_0 \in \hat H^s_r(\T),
    \end{equation}
    cannot be uniformly continuous on bounded sets.
\end{proposition}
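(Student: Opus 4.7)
The plan is to use an explicit two-parameter family of plane-wave solutions and adapt the standard amplitude-modulation argument from~\cite{KPV2001} (already used in the proof of Theorem~\ref{thm:illposed-hat}) to the torus setting. The crucial structural feature of~\eqref{eq:ip-torus-low-regularity} is that the only derivative-free factor in the nonlinearity is $|u|^2$, so a pure Fourier mode stays a pure Fourier mode under the flow. This is why the statement singles out this specific NLS-like equation rather than the full class~\eqref{eq:nls-like}.

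First I would verify that $u_{N,a}(x,t) = a\exp(i(Nx + \omega(N,a)t))$ solves~\eqref{eq:ip-torus-low-regularity} for every $N\in\Z$ and $a\in\R$, with
\begin{equation*}
    \omega(N,a) = -N^{2j} + (-1)^j |a|^2 N^{2j-2};
\end{equation*}
this is a one-line calculation using $\partial_x^{2k} u_{N,a} = (-1)^k N^{2k} u_{N,a}$ and $|u_{N,a}|^2 \equiv a^2$. The key point is the amplitude-dependent phase correction of size $|a|^2 N^{2j-2}$, which is what will be exploited.

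Next, fixing $\delta > 0$ and $T > 0$, for each large $N\in\N$ I would set $a_1 = \delta \JBX[N]^{-s}$ and $a_2 = (\delta + \eta_N)\JBX[N]^{-s}$ with $\eta_N > 0$ still to be calibrated. Since the $\hat H^s_r(\T)$-norm of $a e^{iNx}$ equals $|a|\JBX[N]^s$, both data sequences lie in a fixed ball of radius $\sim \delta$ independent of $N$, and their distance in $\hat H^s_r(\T)$ equals $\eta_N$. A short computation yields $|\omega(N,a_1) - \omega(N,a_2)| \sim \delta \eta_N N^{2j-2-2s}$. Choosing $\eta_N$ so that this phase-velocity gap multiplied by $T$ equals $\pi$ forces $\eta_N \sim (\delta T)^{-1} N^{-(2j-2-2s)}$; the hypothesis $s < j-1$ is used precisely to ensure $2j-2-2s > 0$, and hence $\eta_N \to 0$ as $N\to\infty$. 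At time $T$ the two solutions then satisfy
\begin{equation*}
    u_{N,a_1}(x,T) - u_{N,a_2}(x,T) = e^{i(Nx + \omega(N,a_1)T)}\bigl(a_1 - a_2 e^{i(\omega(N,a_2)-\omega(N,a_1))T}\bigr) = e^{i(Nx + \omega(N,a_1)T)}(a_1 + a_2),
\end{equation*}
whose $\hat H^s_r(\T)$-norm is $(a_1 + a_2)\JBX[N]^s = 2\delta + \eta_N$, bounded below by $\delta$ for all large $N$. This contradicts uniform continuity of $S$ on bounded sets.

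I do not expect any substantial obstacle: the whole argument is essentially routine once the plane-wave family is identified. The heart of the matter is the mismatch between the scaling of the data norm (linear in $|a|$) and the scaling of the nonlinear phase correction (quadratic in $|a|$ with an additional factor $N^{2j-2}$), which produces the net scaling exponent $2j-2-2s$; this is positive exactly under the stated hypothesis $s < j-1$, and it is what drives both the vanishing of $\eta_N$ at time $0$ and the persistence of a macroscopic gap at time $T$.
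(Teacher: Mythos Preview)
Your proposal is correct and follows essentially the same approach as the paper: both exhibit the plane-wave family $a e^{i(Nx+\omega t)}$ whose amplitude-dependent phase correction scales like $|a|^2 N^{2j-2}$, normalise the amplitude by $N^{-s}$ to sit in a fixed ball of $\hat H^s_r(\T)$, and then exploit the resulting net exponent $2j-2-2s>0$ to separate two nearby data by a fixed amount at a later time. The only cosmetic difference is that the paper fixes the amplitude perturbation to be $1/n$ and solves for the time $t_n\to 0$, whereas you fix the time $T$ and solve for the perturbation $\eta_N\to 0$; your inclusion of the sign $(-1)^j$ in $\omega(N,a)$ is in fact slightly more careful than the paper's displayed formula.
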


We want to point out, that equation~\eqref{eq:ip-torus-low-regularity} is in fact a higher-order NLS-like equation according to~\eqref{eq:nls-like}. More so it even fits the structure of an NLS hierarchy equation~\eqref{eq:nls-hierarchy-general-eq}, though it is unlikely to be one because of its simple nonlinearity.

\begin{proof}[Proof of Proposition~\ref{prop:ip-torus-low-regularity}]
    We follow a similar argument to the one used in, for example,~\cite{OhTzvetkov2017}*{Appendix~A.2}.

    The reader may verify that our equation~\eqref{eq:ip-torus-low-regularity} has the two-parameter family of solutions
    \begin{equation*}
    u_{N, a}(x, t) = N^{-s} a \exp(i(Nx - N^{2j} t + N^{2j-2-2s} |a|^2 t)).
    \end{equation*}
    We fix $a \in \R$ at two different values and will only deal with the two solution families $u_{n}(x, t) = u_{N_n, 1}(x, t)$ and $\tilde{u}_{n}(x, t) = u_{N_n, 1+\frac{1}{n}}(x, t)$ depending on $n \in \N$. $N_n$ will be chosen later. We find that
    \begin{equation*}
    \norm{u_n(\cdot, 0)}_{\hat H^s_r}, \norm{\tilde{u}_n(\cdot, 0)}_{\hat H^s_r} \lesssim 1 \qquad\text{and}\qquad \norm{u_n(\cdot, 0) - \tilde{u}_n(\cdot, 0)}_{\hat H^s_r} \sim \frac{1}{n},
    \end{equation*}
    where the implicit constant is independent of $n \in \N$. Now choosing
    \begin{equation*}
    t_n = \frac{\pi N_n^{2s+2-2j}}{(1 + \frac{1}{n})^2 - 1}
    \end{equation*}
    and $N_n$ large enough, such that $t_n \le \frac{1}{n}$, we may then observe that
    \begin{equation*}
    \norm{u_n(\cdot, t_n) - \tilde{u}_n(\cdot, t_n)}_{\hat H^s_r} = \left| \exp(i N_n^{2j-2-2s} (1 - (1 + \frac{1}{n})^2) t_n) - (1 + \frac{1}{n}) \right| = 2 + \frac{1}{n}.
    \end{equation*}
    Letting $n \to \infty$ this shows that the flow is not uniformly continuous. Such a choice is possible, if $2s + 2 - 2j < 0$ or equivalently $s < j -1$ as stated.
\end{proof}

    \appendix
\section{The first few NLS hierarchy equations}\label{appendix:nls-hierarchy}

For the reader's convenience and future reference we will list the first few conserved quantities $I_k$ derived from~\eqref{eq:recursion-flux} and their associated nonlinear evolution equations~\eqref{eq:general-nls-hierarchy-eqn} in terms of the potentials $q$ and $r$. In this form both the focusing and defocusing variants of the (NLS) hierarchy can be derived by the identifications $r = +\overline{q}$ or $r = -\overline{q}$ respectively.

Though we will not just give the even numbered equations, corresponding to the NLS hierarchy, but also those corresponding to the mKdV hierarchy. Using the identification $r = q$ one arrives at the real mKdV hierarchy discussed in~\cite{AGTowers}. Deriving \emph{a} complex mKdV hierarchy (of which again there is a defocusing and focusing variant) is also possible (again using the identifications $r = \pm \overline{q}$). But there are two problems:
\begin{enumerate}
    \item Identifying $r = \pm \overline{q}$ for the equation induced by $I_4$, see~\eqref{eq:mKdV-appendix}, does not lead to the well known form of the complex mKdV equation given in~\eqref{eq:mKdV}. Rather the nonlinearity is replaced by $\pm 6 |u|^2 \partial_x u$, up to a choice of $\alpha_3$. For our local well-posedness theory this does not make a difference, as we are able to estimate both nonlinearities equally well. Though for a treatment relying more on the structure of the equation (e.g. for cancellation properties) this may be a relevant difference.

    When looking at the real mKdV hierarchy, i.e. using $r = q$, this problem does not present itself.

    \item If one wishes to use the identification $r = -\overline{q}$ the compatibility condition for the coefficients $\alpha_{2j+1}$ reads $\alpha_{2j+1} = - \overline{(\alpha_{2j+1})}$, as in~\eqref{eq:coeff-constraint}. Meaning $\alpha_{2j+1}$ is imaginary\footnote{It is non-zero, as otherwise this would lead to a trivial equation.} and thus introducing a factor $i$ that is usually not present in complex mKdV-like equations.

    Again, looking at the real mKdV hierarchy this is a non-issue, see also~\cite{Alberty1982-i}*{Section~3.2.2}.
\end{enumerate}

Not choosing an identification $r = \pm \overline{q}$ or $r = q$ also has the advantage, that we may derive the equations in the KdV hierarchy by setting $r = -1$, see~\cite{Alberty1982-i}*{Section~3.2.1}

Finally we note that our conserved quantities may differ from those given elsewhere in the literature, as these are only determined up to (repeated) partial integration and simplification. The equations though only differ up to a choice of~$\alpha_k$.

A similar listing is given in~\cite{Koch2018}*{Appendix~C} and~\cite{KochKlaus2023}*{Appendix~C}.

\begin{enumerate}[wide]
\item $n = 1, 2$. Phase shifts \& Group of translations
\begin{align*}
&I_1 = -\1{2i} \int qr \d{x} &\text{and} &&I_2 = - \paren{\1{2i}}^2 \int q r_x \d{x}\\
&q_t = 2\alpha_0 q &\text{and} &&q_t = i\alpha_1 q_x
\end{align*}

\item $n = 3$. cubic nonlinear Schrödinger equation
\begin{align*}
I_3 = \paren{\1{2i}}^3 \int q_x r_x + q^2 r^2 \d{x}\\
q_t = \frac{\alpha_2}{2} (- q_{xx} + 2q^2 r)
\end{align*}

\item $n = 4$. modified Korteweg-de-Vries equation
\begin{align}
\nonumber I_4 = \paren{\1{2i}}^4 \int q_x r_{xx} + q q_x r^2 + 4 q^2 r r_x \d{x}\\
q_t = \frac{-\alpha_3}{4} (q_{xxx} - 6 q q_x r) \label{eq:mKdV-appendix}
\end{align}

\item $n = 5$. fourth order NLS hierarchy equation
\begin{align*}
I_5 = \paren{\1{2i}}^5 \int -q_{xx} r_{xx} + q_{xx}r^2 + 6q q_x r r_x + 5 q^2 r_x^2 + 6 q^2 r r_{xx} - 2 q^3 r^3 \d{x}\\
q_t = \frac{-\alpha_4}{8} (-q_{xxxx} + 8 q q_{xx} r + 2 q^2 r_{xx} + 4 q q_x r_x + 6 q_x^2 r - 6 q^3 r^2)
\end{align*}

\item $n = 6$. fifth order mKdV hierarchy equation
\begin{align*}
I_6 = \paren{\1{2i}}^6 \int &- q r_{xxxxx} + q q_{xxx} r^2 + 8 q q_{xx} r r_{x} + 11 q q_x r_x^2 + 12 q q_x r r_{xx}\\ &+ 18 q^2 r_x r_{xx} + 8 q^2 r r_{xxx} - 6 q^2 q_x r^3 - 16 q^3 r^2 r_x \d{x}
\end{align*}
\begin{align*}
q_t = \frac{i\alpha_5}{2^4} (q_{xxxxx} &- 10 q q_{xxx} r - 10 q q_{xx} r_x - 10 q q_x r_{xx} - 20 q_x q_{xx} r - 10 q_x^2 r_x\\ &+ 30 q^2 q_x r^2)
\end{align*}

\item $n = 7$. sixth order NLS hierarchy equation
\begin{align*}
I_7 = \paren{\1{2i}}^7 \int &-q r_{xxxxxx} + q q_{xxxxx}r^2 + 10qq_{xxx}rr_x + 19 qq_{xx}r_x^2 \\&+ 52 q q_x r_xr_{xx} + 20 qq_{xx}rr_{xx} + 20qq_xrr_{xxx} + 19 q^2 r_{xx}^2 \\&+ 28 q^2 r_xr_{xxx} + 10 q^2 r r_{xxxx} + 5 q^4 r^4 \\&- 6 qq_x^2 r^3 - 8 q^2 q_{xx} r^3 - 64 q^2 q_x r^2 r_x - 50 q^3 rr_x^2 - 30 q^3r^2r_{xx}\d{x}
\end{align*}
\begin{align*}
q_t = \frac{\alpha_6}{2^5} (-q_{xxxxxx} &+ 12qq_{xxxx}r + 2q^2r_{xxxx} + 18 qq_{xxx}r_x + 22 qq_{xx}r_{xx} + 8 qq_xr_{xxx} \\&+ 30 q_xq_{xxx}r + 20 q_x^2r_{xx} + 20 q_{xx}^2r + 50q_xq_{xx}r_x + 20 q^4r^3 \\&- 20q^3rr_{xx} - 50 q^2q_{xx}r^2 - 10q^3r_x^2 - 60 q^2q_xrr_x - 70 qq_x^2r^2)
\end{align*}
\end{enumerate}

    \subsection*{Data availability statement}
    No data was used for the research described in this article.

    \subsection*{Conflict of interest statement}
    The author declares there to be no conflict of interest associated with this article.

	\bibliography{bibliography.bib}
\end{document}